  \newtheorem{thm}{Theorem}[section]
  \newtheorem{lem}[thm]{Lemma}
  \newtheorem{prop}[thm]{Proposition}
  \newtheorem{cor}[thm]{Corollary}
  \theoremstyle{definition}
  \newtheorem{defn}[thm]{Definition}
  \newtheorem{exm}[thm]{Example}
  \newtheorem{rmk}[thm]{Remark}
 \newcommand\ra{\rightarrow}
\newcommand{\id}{\mbox{\rm Id}}
\newcommand{\reg}{\mbox{\rm reg}}
\newcommand{\ord}{\mbox{\rm ord}}
 \numberwithin{equation}{section}
\title{\textbf{Application of H{\"o}hle's Square Roots on Hoop  Algebras}}
\author{Ali Madanshekaf$^{^{1}} \footnote{Corresponding Author: Ali Madanshekaf}$  \and  Mohammad Mahdi Motamedi Nezhad$^{^{1}}$ \\
	{\small\em $^1$ Department of Mathematics,} \\ {\small\em Faculty of Mathematics, Statistics and Computer science,} \\ {\small\em Semnan University, Semnan,  Iran}  \\
	{\small\tt \href{amadanshekaf@semnan.ac.ir}{amadanshekaf@semnan.ac.ir}  and  \href{motamedinezhad@semnan.ac.ir}{motamedinezhad@semnan.ac.ir} } }
\date{}
\begin{document}

\maketitle

\begin{abstract}
Square root is a useful tool to study the properties of (ordered) algebraic   structures. In this article, we are going to employ this tool to study hoop algebras. To do so, we define square root and make the first attempt to explore the significance properties of this concept in this setting. Then, due to the key role of square roots in obtaining new hoop algebras, we apply  them on the filters of hoop algebras, and show that the formation of square roots on quotient structures of hoop algebras by their filters is well-behaved. In addition, a new class of hoop algebras having square roots, so called good hoop algebras, is introduced  and its relationships with other classes of ordered algebras such as Bollean algebras and Godel algebras are explored. Several examples are provided as well. Ultimately,  it is shown that the class of all bounded hoop algebras with square roots is a variety.

\end{abstract}

{\small {\it {\bf AMS Mathematics Subject Classification (2020)}:} 06Dxx  06Fxx, 03Gxx}

{\small {\it{\bf  Keywords:} Hoop  algebra, Wajsberg hoop,  Basic hoop, Cancellative hoop, Good hoop, G\"{o}del algebra, Square root and $n$-root, Strict square root.}} 

%{\small {\it{\bf Acknowledgement:}} 

\section{Introduction}
Hoop algebras are naturally ordered commutative residuated integral monoids, introduced by B. Bosbach \cite{Bos1,Bos2}. He showed that the resulting class of structures can be viewed as an equational class, and that the class is congruence distributive and congruence permutable. In a manuscript by J. R. B\"{u}chi and T. M. Owens (\cite{BuOw}) devoted to a study of Bosbach's algebras, written in the mid-seventies, the commutative members of this equational class were given the name \textit{hoops}.
Over the past 50 years, many scientists and researchers have studied the properties of these algebras and numerous articles have been published. 

Among the researchers who have done research in this field so far, the following authors can be mentioned.
Blok and Ferririm \cite{BlFe1,BlFe2}, regarding  on varieties and quasi-varieties of hoops and their reducts and the structure of hoop algebras, Cignoli and Torrens \cite{CiTo}, regarding the free cancellative hoops, Berman and Blok~\cite{BeBl}, regarding the free {\L}ukasiewicz and hoop residuation algebras, Agliano, Ferririm and Montagna \cite{AgFeMo}, regarding the basic hoops, Esteva and {et al. \cite{EsGoHaMo}, regarding the hoops and fuzzy logic, Georgescu and et al. \cite{GeLePr}, regarding the pseudo-hoops, Kondo \cite{Kon}, regarding some types of filters in hoops. In recent years, multiple articles have been published on various topics related to hoop algebras, such as different types of filters, various types of ideals,  derivatives, (semi) topological hoops, the relationship between hoop algebras and other logical algebras, and so on (see also \cite{ABAK1}). 
Square roots are valuable tools for algebraic structures with non-idempotent binary operations. They can provide us useful information about the elements and some properties of these types of algebras, and they help us to build new operations from binary operations according to our needs. For this reason, square roots have many applications in the theory of semigroups, groups, and rings.

In \cite{Hol}, H{\"o}hle also attempted to explore the significance of square roots in the class of residuated lattices; integral, commutative, residuated, $\ell$-monoids. He classified the class of $MV$-algebras with square root and presented a decomposition for each $MV$-algebra as a direct product of a Boolean algebra and a strict $MV$-algebra. He also found the connection between the concepts of divisibility and injectivity and strictness on complete $MV$-algebras. His study showed that a complete $MV$-algebra with square roots is either a complete Boolean algebra or a Boolean valued model of the real unit interval viewed as an $MV$-algebra or a product of both. Ambrosio~\cite{Amb} continued to study square roots on MV-algebras. She introduced the concept of 2-atomless MV-algebras and showed that strict $MV$-algebras are equivalent to 2-atomless $MV$-algebras. Then she proved that each strict $MV$-algebra contains a copy of the $MV$-algebra of all rational dyadic numbers. Finally, she investigated the homomorphic image of $MV$-algebra with square roots. 
 B\v{e}lohl\'{a}vek studied the class of all residuated lattices with square roots and showed that it is an equational class \cite{Rbe}.

The main goals of this paper are to provide a study on hoop algebras with square roots and present their main properties:
\begin{itemize}
	\item Define square root on hoop algebras and review some  examples, 
	\item Studying properties of square roots on variant hoop algebras, such as bounded and basic hoops, and  hoops with double negation property (DNP, for short), and etc.
	\item Define  $n$-th roots as extending  square root and examine some of its properties,
	\item The application of  square root on the filters of hoop algebras is investigated and under some circumstances, a structural  theorem regarding the effect of the square root on quotient hoop algebras has been proved (see Theorem~\ref{5.14} below),
	\item Define good hoop algebra  and explore some of its characteristics and then its relationships with   Boolean algebras and G\"{o}del algebras have been investigated (see Theorems~\ref{4.4} and \ref{Godel algebra}),
	\item In the end it is shown that the class of all bounded hoop algebras with square roots is a variety.
\end{itemize}
The paper is organized as follows. Section 2 contains basic definitions, properties, and results about hoop algebras that will be used in the next sections. Section 3 while define square root on hoop algebras, we study properties of square roots on very hoop algebras. In Section 4, firstly  applications of  square root on the filters of hoop algebras are provided and then we define the concept of a good hoop algebra and strict square root and classify the class of some hoop algebras with square roots.
%%%%%%%%%%%%%%%%%%%%%%%%%%%
\section{Preliminaries}
%%%%%%%%%%%%%%%%%%%%%%%%%%%
In this section, while providing definitions and some properties of hoop  algebras  we collect the prerequisites used for the rest of the paper.
\begin{defn}\label{1.1}{\rm \cite{AgFeMo,Kon}}
A hoop  algebra is an algebra $\mathbf{H}=(H;\odot,\rightarrow,1)$ of type $(2,2,0)$ such that 

{\rm (H1)} $(H;\odot,1)$ is a commutative monoid,

{\rm (H2)} $x\rightarrow x=1$,

{\rm (H3)} $x\odot (x\rightarrow y)=y \odot (y \rightarrow x)$,

{\rm (H4)} $(x \odot y)\rightarrow z = x\rightarrow (y\rightarrow z)$, \\
for every $x,y,z\in H$.
\end{defn}
Let $\mathbf{H} = (H;\odot_H,\rightarrow_H,1_H) $ and $\mathbf{G} = (G;\odot_G,\rightarrow_G,1_G) $ be two hoop  algebras. We say that $G$ is a subalgebra of $H$, denoted by $\mathbf{G} \le \mathbf{H}$, if $G\subseteq H$ and every fundamental operation in $G$ is a restriction of the corresponding operation in $H$, meaning that $1 \in G$,  $\odot_G = \odot_H\mid_G$, and $\rightarrow_G = \rightarrow_H\mid_G$.
%We denote the category of hoop  algebras by $\mathbb{HO}alg$.

In a hoop  algebra $\mathbf{H} = (H;\odot_H,\rightarrow_H,1_H) $, we define $x\le y$ if and only if  $x\rightarrow y=1$. It is easily seen that  $\le$ defines a partial order relation on $H$ (see also~\cite{AgFeMo}). We define $x^0=1$ and $x^n=x^{n-1}\odot x$ for every $n\in \mathbb{N}$. 	An element $x\in H$ is called idempotent if and only if $x\odot x= x$. The set of idempotent elements in $H$ is denoted by  $\id(H)$. $H$ is called idempotent if each of its elements is idempotent. $H$ is called bounded if it has a least element denoted by $0$. We define the unary  operation ``$\;'\; $"  for every $x\in H$ as $x': =x\rightarrow 0$. We also set $(x')'= x''$.
 An element $x\in H$ is called dense if and only if $x'= 0$. The set of all dense elements in $H$ is denoted by $D(H)$.
%Let $H$ be a bounded hoop  algebra. 
 An element $x\in H$ is called regular if $x''= x$. The set of regular elements in $H$ is denoted by $\reg(H)$. If for every $x \in H$, $x''= x$, then we say that the bounded hoop  algebra $H$ has the ``double negation property", or simply $(DNP)$. Let $H$ be a bounded hoop  algebra. For every $x\in H$ with $x\ne 1$, the order of $x$, denoted by $\ord (x)$, is the smallest $n\in \mathbb{N}$ such that $x^n = 0$, if such an $n$ exists. In this case, $x$ is called a nilpotent element. The set of all nilpotent elements in $H$ is denoted by $N(H)$. If $x^n\ne 0$ for every $n$, then we define $ \ord (x) = \infty $. The order of $H$, denoted by $\ord (H)$, is defined as $\ord(H) = \sup\{ \ord(x) :x\in H-\{1\}\}$.     

We call a bounded hoop  algebra local if $ \ord(x) < \infty$ or $ \ord (x') < \infty$ for every $x\in H$. A bounded hoop  algebra is called locally finite if the order of every $x\in H$ and $x\ne 1$, is finite.
%\marginpar{\fbox{Correction  examples}}
\begin{exm}\label{1.51}
a) Let $\mathbf{G}=(G;+,-,0,\wedge,\vee)$ be an Abelian $l$-group. Consider the positive cone $P(\mathbf{G})$ of $\mathbf{G}$, defined by $P(\mathbf{G}) = \{x\in G: x\vee 0=x\}.
$ 
It is clear that $\mathbf{P(G)}: = (P(\mathbf{G});\cdot,\rightarrow,1)$ is an algebra with the following operations:
\[
x\cdot y:=x+y, \quad x\rightarrow y:=(y-x)\vee 0, \quad 1_{\mathbf{P(G)}}:=0_\mathbf{G}
\]
It can be easily checked that $\mathbf{P(G)}$ satisfies the conditions of Definition \ref{1.1}. Therefore, it is a hoop  algebra. Notice that the partial order on $\mathbf{P(G)}$ is the converse of the  partial order induced by $\mathbf{G}$.\\
b) Consider the free monoid generated by a single element $a$. Put $C_a= \{ 1=a^0, a, a^2, \cdots \}$ 
%ordered as $1>a>a^2, \cdots $. 
and define the operations $\odot$ and $\rightarrow$ on $C_a$ for any $n,m <w$ as follows:
\[a^n\cdot a^m=a^{n+m}, \quad a^n\rightarrow a^m=a^{\max(m-n,0)}\] 
It is evident that $C_a$ satisfies the conditions of Definition \ref{1.1}. Therefore, it is a hoop algebra.
For more information see~\cite{BlFe1}. \\
c) Let $\mathbf{G}= (G;+,-,0,\wedge,\vee) $ be an Abelian $l$-group, $u\in G$ be an arbitrary element with $u \ge 0$.
 Put $[0,u]=\{x\in G : 0\le x\le u \}.$ 
On the set $[0,u]$, define the operations $\odot$ and $\rightarrow$ for every $x,y\in [0,u]$ as follows:
\[
x \odot y = (x+y-u)\vee 0, \quad x\rightarrow y = (y-x+u)\wedge u
\]
It is easy to see that $\Gamma(G,u) := ([0,u];\odot,\rightarrow, u) $ satisfies the conditions of Definition \ref{1.1}. Therefore, it is a bounded hoop algebra (see also \cite{GeLePr}).\\
% is established.
d)	%\cite[Example 3.2]{NaABBsAk}
Let $H=\{0,a,b,c,d,1\}$. Define the operations $\odot$ and $\rightarrow$ on $H$ as follows:
\[
{\hfill
\hbox{%\begin{table}[htb]
		%\centering
		%\caption{test caption}
		\label{tabel1}
		\begin{tabular}{|c||c|c|c|c|c|c|} 
			\hline
			$\odot$ & 0 & a & b & c & d & 1\\ 
			\hline
			0 & 0 & 0 & 0 & 0 & 0 & 0 \\ 
			\hline
			a & 0 & b & b & d & 0 & a \\ 
			\hline		
			b & 0 & b & b & 0 & 0 & b \\ 
			\hline		
			c & 0 & d & 0 & c & d & c \\ 
			\hline		
			d & 0 & 0 & 0 & d & 0 & d \\ 
			\hline
			1 & a & b & c & d & e & f \\
			\hline	
		\end{tabular}
%\end{table}
}
}\qquad
{\hfill \hbox{%\begin{table}[htb]
		%\centering
		%\caption{test caption}
	%	\label{tabel2}
		\begin{tabular}{|c||c|c|c|c|c|c|} 
			\hline
			$\rightarrow$ & 0 & a & b & c & d & 1 \\ 
			\hline
			0 & 1 & 1 & 1 & 1 & 1 & 1 \\ 
			\hline
			a & d & 1 & a & c & c & 1 \\ 
			\hline		
			b & c & 1 & 1 & c & c & 1 \\ 
			\hline		
			c & b & a & b & 1 & a & 1 \\ 
			\hline		
			d & a & 1 & a & 1 & 1 & 1 \\ 
			\hline
			1 & 0 & a & b & c & d & 1 \\
			\hline	
		\end{tabular}
%\end{table}
} 
\hfill
}
\]
It can be easily checked that the introduced structure satisfies the conditions of Definition \ref{1.1} Therefore, the algebra is a bounded hoop algebra.
\end{exm}
%\begin{defn}\label{1.13}\cite[Definition 2.11]{NaABBsAk}
	Let $H$ and $G$ be two bounded hoop  algebras. A map $f: H \rightarrow G$ is called a hoop  homomorphism if  for every $x,y\in H$, $f(0) = 0$, $f(1) = 1$, and $f(x\odot y) = f(x)\odot f(y)$, $f(x\rightarrow y) = f(x)\rightarrow f(y)$. Then one can prove that it is order preserving. In addition, if $f$ is one to one then, it is an order embedding. Indeed, for $x, y\in H$ we have $x\le y$ iff $x\rightarrow y=1$ iff $f(x\rightarrow y)=f(1)=1$ iff $f(x) \rightarrow f(y)=1$ iff $f(x)\le f(y)$.

\begin{prop}\label{1.15}
	If $(H;\odot_H,\rightarrow_H,1_H)$ and $(G;\odot_G,\rightarrow_G,1_G)$ are two hoop  algebras, then $(H\times G;\odot_{H\times G},\rightarrow_{H\times G},(1_H,1_G))$ becomes a hoop  algebra with the following operations:
\begin{align*}
		(x_1,x_2) \odot_{H\times G} (y_1,y_2)&= (x_1\odot_H y_1, x_2\odot_G y_2),\quad \\
		(x_1,x_2) \rightarrow_{H\times G} (y_1,y_2)&= (x_1 \rightarrow_H y_1, x_2\rightarrow_G y_2) 
\end{align*}
for every $x_1,y_1 \in H$ and $x_2,y_2\in G$.
\end{prop}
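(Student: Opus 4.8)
The plan is to verify the four hoop axioms (H1)--(H4) of Definition~\ref{1.1} directly on the product, exploiting that every operation on $H\times G$ is defined coordinatewise. The guiding principle is that an identity of the form $s=t$ in the language $\{\odot,\rightarrow,1\}$ holds in $H\times G$ precisely when its first coordinate holds in $\mathbf{H}$ and its second coordinate holds in $\mathbf{G}$; since both $\mathbf{H}$ and $\mathbf{G}$ are hoop algebras, each coordinate equation is already available to us.

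First I would treat (H1). Associativity and commutativity of $\odot_{H\times G}$ reduce coordinatewise to associativity and commutativity of $\odot_H$ and $\odot_G$, and $(1_H,1_G)$ serves as a two-sided identity because $(x_1,x_2)\odot_{H\times G}(1_H,1_G)=(x_1\odot_H 1_H, x_2\odot_G 1_G)=(x_1,x_2)$. Hence $(H\times G;\odot_{H\times G},(1_H,1_G))$ is a commutative monoid.

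Next I would dispatch (H2)--(H4). For (H2), $(x_1,x_2)\rightarrow_{H\times G}(x_1,x_2)=(x_1\rightarrow_H x_1, x_2\rightarrow_G x_2)=(1_H,1_G)$. For (H3), I would expand both sides coordinatewise and invoke (H3) in $\mathbf{H}$ and in $\mathbf{G}$ separately, obtaining that $(x_1,x_2)\odot_{H\times G}\big((x_1,x_2)\rightarrow_{H\times G}(y_1,y_2)\big)$ and $(y_1,y_2)\odot_{H\times G}\big((y_1,y_2)\rightarrow_{H\times G}(x_1,x_2)\big)$ agree in each coordinate. For (H4), the same coordinatewise expansion reduces the claim to (H4) in $\mathbf{H}$ and in $\mathbf{G}$.

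There is no genuine obstacle here: the argument is a routine coordinatewise check, and the only point worth stating explicitly is that the identity element of the product must be taken to be $(1_H,1_G)$, which is precisely what makes (H1) go through. In the language of universal algebra this proposition is simply an instance of the fact that any equational class of algebras---as the hoop algebras are---is closed under direct products.
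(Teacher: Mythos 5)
Your proposal is correct and matches the paper's proof, which simply states that verifying the conditions of Definition~\ref{1.1} is straightforward; your coordinatewise check of (H1)--(H4) is exactly that routine verification, spelled out. The closing remark that hoops form an equational class closed under direct products is a valid, slightly more general justification, but it does not change the substance of the argument.
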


\begin{proof}
	It is straightforward to verify that the conditions of Definition \ref{1.1} hold.
\end{proof}
\begin{thm}\label{1.4}{\rm (\cite{Bos1,Bos2})}
Let $(H;\odot,\rightarrow,1)$ be a hoop  algebra. Then for every $x,y,z,a\in H$ we have:
\begin{itemize}
	\item[{\rm (i)}] $(H;\le)$ is a $\wedge$-semilattice with  $x\wedge y=x\odot(x \rightarrow y)$,

	\item[{\rm (ii)}] $x\odot y \le z$ if and only if $x\le y\rightarrow z$,

	\item[{\rm (iii)}] $ x\odot y \le x, y$,

	\item[{\rm (iv)}] $ x \odot (x \rightarrow y) \le x,y $,

	\item[{\rm (v)}] $ x \le y \rightarrow x $,

	\item[{\rm (vi)}] $ 1 \rightarrow x = x $,

	\item[{\rm (vii)}] $ x \rightarrow 1 = 1 $, 

	\item[{\rm (viii)}] $ y \le (y\rightarrow x)\rightarrow x $,

	\item[{\rm (ix)}] $ x \le (x\rightarrow y) \rightarrow x $, 

	\item[{\rm (x)}] $ x \le y \rightarrow (x \odot y) $,

	\item[{\rm (xi)}] $ x \rightarrow y \le (y \rightarrow z) \rightarrow (x \rightarrow z) $,

	\item[{\rm (xii)}] $ (x \rightarrow y) \odot (y \rightarrow z) \le x \rightarrow z $,

	\item[{\rm (xiii)}] $x \le y $ implies $ x\odot z \le y \odot z $,

	\item[{\rm (xiv)}] $ x \le y $  implies $ z \rightarrow x \le z \rightarrow y $, 

	\item[{\rm (xv)}] $x \le y $  implies  $ y \rightarrow z \le x \rightarrow z$. 
\end{itemize}
\end{thm}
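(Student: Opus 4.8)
The plan is to treat the residuation law (ii) as the engine of the whole theorem and to derive the remaining fourteen items in a dependency order in which every step is a one-line manipulation of (H1)--(H4) together with (ii). Since the text has already recorded that $\le$ is a partial order, I will use reflexivity, antisymmetry and transitivity of $\le$ freely; antisymmetry is in any case immediate from (H3), because $x\rightarrow y=1=y\rightarrow x$ forces $x=x\odot 1=x\odot(x\rightarrow y)=y\odot(y\rightarrow x)=y\odot 1=y$. The first move is (ii) itself: unwinding the definition of $\le$, the equivalence $x\odot y\le z\Leftrightarrow x\le y\rightarrow z$ says exactly that $(x\odot y)\rightarrow z=1$ iff $x\rightarrow(y\rightarrow z)=1$, which is literally (H4). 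Thus (ii) costs nothing beyond (H4), and it is what converts every later inequality into an equivalent one after shuffling a factor across the arrow.

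Next I would secure the unit facts, which I expect to be the only genuinely delicate part. Item (x) is free: $(x\odot y)\rightarrow(x\odot y)=1$ by (H2), and by (H4) this equals $x\rightarrow(y\rightarrow(x\odot y))$, so $x\le y\rightarrow(x\odot y)$; taking $y=1$ gives $x\le 1\rightarrow x$. In the other direction, (H4) and (H2) give $((x\rightarrow 1)\odot x)\rightarrow 1=(x\rightarrow 1)\rightarrow(x\rightarrow 1)=1$, while (H3) gives $(x\rightarrow 1)\odot x=x\odot(x\rightarrow 1)=1\rightarrow x$, whence $1\rightarrow x\le 1$. Chaining $x\le 1\rightarrow x\le 1$ by transitivity proves (vii), $x\rightarrow 1=1$, and then $1\rightarrow x=x\odot(x\rightarrow 1)=x\odot 1=x$ (again by (H3)) proves (vi). With (vii) in hand, (v) follows from $x\rightarrow(y\rightarrow x)=(x\odot y)\rightarrow x=(y\odot x)\rightarrow x=y\rightarrow(x\rightarrow x)=y\rightarrow 1=1$. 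Residuation (ii) then turns (v) into $x\odot y\le x$, and commutativity gives $x\odot y\le y$, which is (iii); finally (iv) is the instance $x\odot(x\rightarrow y)\le x$ of (iii) together with $x\odot(x\rightarrow y)=y\odot(y\rightarrow x)\le y$ via (H3).

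The remaining items cascade mechanically. Items (viii) and (ix) are immediate: (viii) is $y\odot(y\rightarrow x)\le x$ (that is (iv)) read through (ii), and (ix) is the instance of (v) with $y$ replaced by $x\rightarrow y$. For the monotonicity laws I would use (ii), (iv), (x) and transitivity: (xiii) because $x\le y\le z\rightarrow(y\odot z)$ gives $x\le z\rightarrow(y\odot z)$, i.e. $x\odot z\le y\odot z$; (xiv) because $(z\rightarrow x)\odot z\le x\le y$ gives $z\rightarrow x\le z\rightarrow y$; and (xv) because, using (xiii) and (iv), $(y\rightarrow z)\odot x\le(y\rightarrow z)\odot y\le z$ gives $y\rightarrow z\le x\rightarrow z$. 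The two composition laws coincide after one application of (ii): (xi) is equivalent to $(x\rightarrow y)\odot(y\rightarrow z)\le x\rightarrow z$, which is exactly (xii); and this last inequality follows from $x\odot(x\rightarrow y)\odot(y\rightarrow z)\le y\odot(y\rightarrow z)\le z$ (by (xiii) and (iv)) read through (ii). Finally, for (i), (iv) shows that $x\odot(x\rightarrow y)$ is a lower bound of $x$ and $y$; and if $z\le x$ and $z\le y$, then writing $z=x\odot(x\rightarrow z)$ (the $a\le b\Rightarrow a=b\odot(b\rightarrow a)$ consequence of (H3)) and applying (xiv) and (xiii) yields $z=x\odot(x\rightarrow z)\le x\odot(x\rightarrow y)$, so $x\odot(x\rightarrow y)$ is the greatest lower bound and $(H;\le)$ is a $\wedge$-semilattice.

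I expect the main obstacle to be precisely the unit/top bootstrap in the second paragraph: once $x\rightarrow 1=1$ and $1\rightarrow x=x$ are available, the partial-order hypothesis supplies transitivity and everything downstream is a one-line residuation shuffle. The subtlety is that (v) and (vii) are in fact equivalent (the displayed computation reduces one to the other), and likewise (xi) and (xii) are equivalent via (ii), so the whole theorem really rests on two independent facts --- the residuation adjunction (which is (H4) read as (ii)) and the top-element law (vii) --- plus the already-granted fact that $\le$ is a partial order. Isolating the right substitutions into (H3) and (H4) to pin down (vii) is the only place where cleverness, rather than bookkeeping, is needed.
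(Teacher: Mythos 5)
The paper gives no proof of this theorem at all: it is imported wholesale from Bosbach's papers \cite{Bos1,Bos2}, so there is no internal argument to compare yours against. Your proposal supplies the missing derivation, and it checks out item by item: (ii) is indeed nothing but (H4) read through the definition of $\le$; the bootstrap $(x\rightarrow 1)\odot x = 1\rightarrow x$ via (H3) together with $((x\rightarrow 1)\odot x)\rightarrow 1 = (x\rightarrow 1)\rightarrow(x\rightarrow 1)=1$ via (H4) and (H2) correctly yields $1\rightarrow x\le 1$, hence (vii) and then (vi); and the cascade (v) $\Rightarrow$ (iii) $\Rightarrow$ (iv) $\Rightarrow$ (viii), the monotonicity laws (xiii)--(xv), the collapse of (xi) and (xii) into one inequality under residuation, and the greatest-lower-bound argument for (i) via $z\le x\Rightarrow z=x\odot(x\rightarrow z)$ are all sound. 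What your route buys is a self-contained verification from (H1)--(H4) of a result the paper asks the reader to take on faith from sources that are in German or unpublished; your closing observation that everything reduces to residuation plus the top-element law (vii) is also accurate.

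One caveat is worth flagging. Your bootstrap of (vii) chains $x\le 1\rightarrow x\le 1$ by transitivity, and transitivity of $\le$ is precisely where the paper's remark that ``$\le$ defines a partial order'' hides real work: the natural proofs of transitivity (e.g.\ via $(x\odot y)\rightarrow x = y\rightarrow 1$) themselves invoke (vii). Your argument is not circular \emph{relative to the paper}, since the paper grants the partial order independently and you say so explicitly, but a fully self-contained treatment should establish (vi) and (vii) equationally before touching the order. This can be done: from (H3) one gets $1\rightarrow x = x\odot(x\rightarrow 1)$ and, since $x\rightarrow(1\rightarrow x)=1$ by (H4) and (H2), also $x=(1\rightarrow x)\odot((1\rightarrow x)\rightarrow x)$; substituting the first identity into the second and simplifying with $(x\odot(x\rightarrow 1))\rightarrow x=(x\rightarrow 1)\rightarrow 1$ and $(x\rightarrow 1)\odot((x\rightarrow 1)\rightarrow 1)=x\rightarrow 1$ (both instances of (H3)/(H4)) gives $x = x\odot(x\rightarrow 1)$, from which (vi) and (vii) drop out immediately and transitivity then follows. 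With that preamble your dependency order becomes airtight as a derivation from the axioms alone.
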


\begin{prop}\label{1.6} {\rm \cite{ABAK1,GeLePr}}
Let $H$ be a hoop  algebra. For every $x,y\in H$, we define
\[
x \vee y = ((x\rightarrow y )\rightarrow y) \wedge ((y\rightarrow x) \rightarrow x)
\]
Then, for every $x,y\in H$, the following conditions are equivalent:

{\rm (i)} The operation $\vee$ is associative.

{\rm (ii)} $x\vee (y \wedge z)\le (x \vee y)\wedge (x\vee z)$ for all $x,y,z \in H$.

{\rm (iii)} $x\le y$ implies $x\vee z \le y\vee z$ for all $x,y,z \in H$.

{\rm (iv)} The operation $\vee$ is a join operation on $H$.
\end{prop}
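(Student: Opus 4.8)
The plan is to first record four facts about the operation $\vee$ that hold in every hoop algebra with no extra hypotheses, and then to close a short cycle of implications among (i)--(iv). The four facts are: (P1) $\vee$ is commutative, since the meet $\wedge$ of Theorem~\ref{1.4}(i) is commutative and the two terms in the defining expression for $x\vee y$ merely swap when $x$ and $y$ are interchanged; (P2) $x\vee x=x$, because $x\vee x=((x\rightarrow x)\rightarrow x)\wedge((x\rightarrow x)\rightarrow x)=(1\rightarrow x)\wedge(1\rightarrow x)=x$ by (H2) and Theorem~\ref{1.4}(vi); (P3) $x\le x\vee y$ and $y\le x\vee y$, since $x\le(x\rightarrow y)\rightarrow y$ by Theorem~\ref{1.4}(viii) and $x\le(y\rightarrow x)\rightarrow x$ by Theorem~\ref{1.4}(v), so $x$ lies below both terms of the meet, and symmetrically for $y$ using (P1); and (P4) $x\le y$ if and only if $x\vee y=y$. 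For (P4), if $x\le y$ then $x\rightarrow y=1$, so $(x\rightarrow y)\rightarrow y=1\rightarrow y=y$ and hence $x\vee y=y\wedge((y\rightarrow x)\rightarrow x)=y$ because $y\le(y\rightarrow x)\rightarrow x$; conversely $x\vee y=y$ forces $x\le x\vee y=y$ by (P3).

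With these in hand I would prove (i)$\Rightarrow$(iii)$\Rightarrow$(iv)$\Rightarrow$(i) together with (ii)$\Leftrightarrow$(iii). For (i)$\Rightarrow$(iii): assuming associativity, $(H;\vee)$ is a commutative, idempotent, associative semigroup by (P1), (P2) and (i), that is, a semilattice whose canonical order $x\preceq y:\Leftrightarrow x\vee y=y$ coincides with $\le$ by (P4); since join is monotone in any semilattice, $x\le y$ yields $x\vee z\le y\vee z$, which is (iii). For (iii)$\Rightarrow$(ii): by (iii) and (P1) the operation $\vee$ is monotone in each argument, so from $y\wedge z\le y$ and $y\wedge z\le z$ one gets $x\vee(y\wedge z)\le x\vee y$ and $x\vee(y\wedge z)\le x\vee z$, whence $x\vee(y\wedge z)\le(x\vee y)\wedge(x\vee z)$. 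For (ii)$\Rightarrow$(iii): if $x\le y$ then $x\wedge y=x$ by Theorem~\ref{1.4}(i), so using (ii) and (P1), $x\vee z=z\vee(x\wedge y)\le(z\vee x)\wedge(z\vee y)\le z\vee y=y\vee z$, giving (iii).

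The heart of the argument is (iii)$\Rightarrow$(iv): I must upgrade ``upper bound'' from (P3) to ``least upper bound''. Given any common upper bound $u$ of $x$ and $y$, I apply (iii) to $x\le u$ to obtain $x\vee y\le u\vee y$, and then, since $y\le u$, apply (iii) again through commutativity (P1) together with idempotency (P2) to get $u\vee y=y\vee u\le u\vee u=u$; hence $x\vee y\le u$, and with (P3) this shows $x\vee y=\sup\{x,y\}$, i.e.\ $\vee$ is a join, which is (iv). Finally (iv)$\Rightarrow$(i) is the standard order-theoretic fact that when all binary joins exist, both $(x\vee y)\vee z$ and $x\vee(y\vee z)$ equal $\sup\{x,y,z\}$, so $\vee$ is associative. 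Chaining these gives (i)$\Leftrightarrow$(iii)$\Leftrightarrow$(iv) and (ii)$\Leftrightarrow$(iii), hence the equivalence of all four.

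I expect the main obstacle to be conceptual rather than computational: the realization that (P1)--(P4) already make $x\vee y$ an upper bound of $x$ and $y$ unconditionally, so that the entire content of the four conditions is precisely the passage from upper bound to \emph{least} upper bound. Once this is isolated, the step (iii)$\Rightarrow$(iv) carries the real weight, since it is there that monotonicity and idempotency are combined to squeeze $x\vee y$ below an arbitrary upper bound; the remaining implications are routine manipulations with the properties collected in Theorem~\ref{1.4}.
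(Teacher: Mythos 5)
Your proof is correct, but note that the paper itself offers nothing to compare it against: Proposition~\ref{1.6} is stated there with citations to the literature (Borzooei--Aaly Kologani and Georgescu--Leustean--Preoteasa) and no proof is reproduced, so your argument is a genuine, self-contained reconstruction rather than a variant of an in-paper proof. I checked the steps and they all go through with only the resources the paper provides: (P1)--(P4) follow from (H2) and Theorem~\ref{1.4}(v),(vi),(viii) exactly as you say (for $x\le (y\rightarrow x)\rightarrow x$ you correctly instantiate \ref{1.4}(v) at $z=y\rightarrow x$); the cycle (i)$\Rightarrow$(iii)$\Rightarrow$(iv)$\Rightarrow$(i) together with (ii)$\Leftrightarrow$(iii) closes the equivalence; and the two places where something could go wrong are handled properly, namely in (i)$\Rightarrow$(iii) you need (P4) to identify the semilattice order of the commutative idempotent semigroup $(H;\vee)$ with the hoop order $\le$ before invoking monotonicity of join, and in (iii)$\Rightarrow$(iv) you need the chain $x\vee y\le u\vee y=y\vee u\le u\vee u=u$, which uses exactly (iii), (P1) and (P2). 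Your closing observation is also the right way to frame the content of the proposition: (P3) makes $x\vee y$ an upper bound unconditionally in any hoop, so all four conditions are equivalent ways of expressing that it is the \emph{least} upper bound; this is precisely why the hypothesis cannot be dropped (in a general hoop $\vee$ need not be a join), and your proof makes that visible in a way the bare citation in the paper does not.
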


\begin{defn}\label{1.55}{\rm \cite[Definition 2.7]{ABAK1}}
A hoop  algebra $H$ is called $\vee$-hoop  if $\vee$ is a join operation on $H$.
\end{defn}

\begin{prop}\label{1.8}{\rm (see \cite[Proposition 2.8 and Lemma 2.9]{GeLePr})}
Let $H$ be a hoop algebra, and let $x,y,z\in H$ and $I$ be an arbitrary set. If there exist  arbitrary joins then

{\rm (i)}    $(x\vee y)\rightarrow z = (x \rightarrow z)\wedge (y \rightarrow z)$,   

{\rm (ii)}   $x\odot (\bigvee_{i\in I} y_i)= \bigvee_{i\in I} (x \odot y_i)$,

{\rm (iii)}  $x\wedge (\bigvee_{i\in I} y_i) = \bigvee_{i\in I} (x \wedge y_i).$ 
\end{prop}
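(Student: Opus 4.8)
The plan is to establish the three identities in the order (ii), (i), (iii), because the second distributive law is the engine that drives the other two, and throughout I would treat $\bigvee_{i\in I}y_i$ as a genuine supremum in $(H,\le)$.

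For part (ii) I would prove the two inequalities separately. The inequality $\bigvee_{i}(x\odot y_i)\le x\odot(\bigvee_{i} y_i)$ is immediate from monotonicity: since $y_i\le\bigvee_{j}y_j$, Theorem~\ref{1.4}(xiii) (together with commutativity) gives $x\odot y_i\le x\odot(\bigvee_{j}y_j)$, so the right-hand side is an upper bound of $\{x\odot y_i\}$ and hence dominates its supremum. For the reverse inequality I would invoke the residuation adjunction of Theorem~\ref{1.4}(ii). Writing $s=\bigvee_{i}(x\odot y_i)$, for each $i$ we have $x\odot y_i\le s$, so $y_i\le x\rightarrow s$; thus $x\rightarrow s$ is an upper bound of $\{y_i\}$, whence $\bigvee_{i}y_i\le x\rightarrow s$, and residuation once more yields $x\odot(\bigvee_{i}y_i)\le s$.

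Part (i) then follows quickly. The inequality $(x\vee y)\rightarrow z\le(x\rightarrow z)\wedge(y\rightarrow z)$ comes from the antitonicity of $\rightarrow$ in its first argument, Theorem~\ref{1.4}(xv), applied to $x,y\le x\vee y$. For the converse I would set $w=(x\rightarrow z)\wedge(y\rightarrow z)$; residuation turns $w\le x\rightarrow z$ and $w\le y\rightarrow z$ into $w\odot x\le z$ and $w\odot y\le z$, and then the binary case of part (ii) gives $w\odot(x\vee y)=(w\odot x)\vee(w\odot y)\le z$, so $w\le(x\vee y)\rightarrow z$.

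The real work is in part (iii), and this is the step I expect to be the main obstacle, since $\rightarrow$ does not in general distribute over joins in its second argument, so one cannot simply expand $x\rightarrow\bigvee_{i}y_i$. The $\ge$ direction is routine monotonicity of $\wedge$. For the $\le$ direction my plan is to rewrite the meet using the symmetric form of (H3): with $S=\bigvee_{i}y_i$, Theorem~\ref{1.4}(i) together with (H3) gives $x\wedge S=x\odot(x\rightarrow S)=S\odot(S\rightarrow x)$. Distributing the $\odot$ across $S=\bigvee_{i}y_i$ by part (ii) yields $x\wedge S=\bigvee_{i}\bigl(y_i\odot(S\rightarrow x)\bigr)$. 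It then suffices to check that each summand is bounded by $x\wedge y_i$: indeed $y_i\odot(S\rightarrow x)\le y_i$ by Theorem~\ref{1.4}(iii), while $y_i\le S$ and Theorem~\ref{1.4}(xiii) give $y_i\odot(S\rightarrow x)\le S\odot(S\rightarrow x)=x\wedge S\le x$, so $y_i\odot(S\rightarrow x)\le x\wedge y_i$. Hence every summand lies below $\bigvee_{j}(x\wedge y_j)$, and taking the join gives $x\wedge S\le\bigvee_{i}(x\wedge y_i)$, which completes the argument.
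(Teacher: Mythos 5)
Your proposal is correct. Note that the paper itself gives no proof of this proposition -- it is quoted verbatim from the cited reference (Georgescu--Leu{\c s}tean--Preoteasa, Proposition 2.8 and Lemma 2.9) -- so there is no internal argument to compare against; what you have written is a sound, self-contained reconstruction that uses only facts available in the paper (Theorem~\ref{1.4}(i),(ii),(iii),(xiii),(xv) and axiom (H3)). Your strategy is also the standard one for this result: the adjunction argument for (ii), deriving (i) from the binary case of (ii) via residuation, and, for the genuinely nontrivial part (iii), the divisibility trick of rewriting $x\wedge S=S\odot(S\rightarrow x)$ so that part (ii) can be applied to the join inside the product -- precisely the mechanism used in the cited source for pseudo-hoops. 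Every step checks out, including the final bounding of each summand $y_i\odot(S\rightarrow x)$ below both $y_i$ and $x$, which is what makes the join comparison go through.
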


\begin{prop}\label{1.9}{\rm \cite[Proposition 2.15]{GeLePr}}
Let $H$ be a $\vee$-hoop algebra and $x, y, z, t \in H$. Then:

{\rm (i)} If $x\vee y = 1$, then $x\odot y = x\wedge y$.

{\rm (ii)} If $x\vee y = 1$ and $x\le z$, $y\le t$, then $z\vee t = 1$.

{\rm (iii)} If $x\vee y = 1$, then $x^n \vee y^n = 1$ for every $n\in \mathbb{N}$.
\end{prop}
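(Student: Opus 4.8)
The plan is to dispatch the three parts in order, each being short once the right tool is identified, and to isolate a single multiplicative lemma that powers part~(iii).

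For (i), I would first record that $x\odot y\le x\wedge y$ holds in any hoop: by Theorem~\ref{1.4}(iii) we have $x\odot y\le x$ and $x\odot y\le y$, so $x\odot y$ is a lower bound of $\{x,y\}$ and hence lies below their meet. The reverse inequality is where the hypothesis enters. Since $1$ is the top element (Theorem~\ref{1.4}(vii) gives $u\le 1$ for all $u$), the equality $x\vee y=((x\to y)\to y)\wedge((y\to x)\to x)=1$ forces each meetand to equal $1$; in particular $(x\to y)\to y=1$, i.e.\ $x\to y\le y$. Using $x\wedge y=x\odot(x\to y)$ from Theorem~\ref{1.4}(i) together with monotonicity of $\odot$ (Theorem~\ref{1.4}(xiii)), I obtain $x\wedge y=x\odot(x\to y)\le x\odot y$. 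Combining the two inequalities yields $x\odot y=x\wedge y$.

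For (ii), the content is just monotonicity of the join. Since $H$ is a $\vee$-hoop, condition (iv) of Proposition~\ref{1.6} holds, hence so does (iii): $a\le b$ implies $a\vee c\le b\vee c$. As $\vee$ is visibly symmetric in its two arguments, it is monotone in each slot. Therefore $1=x\vee y\le z\vee y=y\vee z\le t\vee z=z\vee t$, and since $1$ is the top element this chain forces $z\vee t=1$.

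Part (iii) is the substantive one, and I would build it on the following lemma: if $a\vee c=1$ and $b\vee c=1$, then $(a\odot b)\vee c=1$. To prove it I would expand $1=(a\vee c)\odot(b\vee c)$ using the binary form of the multiplicative distributivity in Proposition~\ref{1.8}(ii) (legitimate because binary joins exist in a $\vee$-hoop), obtaining $1=(a\odot b)\vee(b\odot c)\vee(a\odot c)\vee c^2$. The three trailing terms $b\odot c$, $a\odot c$, $c^2$ are each $\le c$ by Theorem~\ref{1.4}(iii), so they are absorbed and $1\le(a\odot b)\vee c$, giving the lemma.

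With the lemma in hand I would finish by a double induction. First, fixing $y$, induction on $n$ shows $x\vee y=1\Rightarrow x^n\vee y=1$: the inductive step applies the lemma to $a=x^{n-1}$, $b=x$, $c=y$, using the hypotheses $x^{n-1}\vee y=1$ and $x\vee y=1$ to conclude $x^n\vee y=1$. Rewriting the outcome as $y\vee x^n=1$ and running the same induction with the roles of the two base elements exchanged (now raising $y$ while keeping $x^n$ as the fixed term $c$) gives $y^n\vee x^n=1$, which is the desired $x^n\vee y^n=1$. I expect the main obstacle to be exactly the bookkeeping of part~(iii): getting the distributive expansion and the absorption step right, and then organizing the two nested inductions so that the symmetry argument is clean; parts~(i) and~(ii) should be routine once the observation $x\vee y=1\Rightarrow x\to y\le y$ and the monotonicity of $\vee$ are in place.
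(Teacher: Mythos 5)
Your proposal is correct, but note that the paper itself offers no proof to compare against: Proposition~\ref{1.9} is quoted verbatim from \cite[Proposition 2.15]{GeLePr} and left unproved. Judged on its own terms, your argument is sound and uses only tools stated in the paper. In (i), the observation that $x\vee y=1$ forces $(x\to y)\to y=1$, i.e.\ $x\to y\le y$, combined with $x\wedge y=x\odot(x\to y)$ and Theorem~\ref{1.4}(xiii), correctly yields $x\wedge y\le x\odot y$, and the reverse inequality is automatic. In (ii), monotonicity of $\vee$ is indeed available via the equivalence in Proposition~\ref{1.6} (or directly from the least-upper-bound property). In (iii), your lemma is proved correctly: the binary instance of Proposition~\ref{1.8}(ii) is legitimate because $\odot$ is residuated, so it distributes over whatever joins exist, and the absorption of $b\odot c$, $a\odot c$, $c^{2}$ into $c$ is justified by Theorem~\ref{1.4}(iii) together with monotonicity of $\vee$. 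For comparison, the argument in the cited source (and the classical one for $\ell$-groups and MV-algebras) compresses your double induction into a single expansion: $1=(x\vee y)^{2n}\le x^{n}\vee y^{n}$, since every product of $2n$ factors drawn from $\{x,y\}$ contains at least $n$ copies of $x$ or at least $n$ copies of $y$. Your two nested inductions buy a cleaner bookkeeping at the cost of stating an auxiliary lemma; both routes rest on the same distributivity fact.
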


\begin{prop}\label{1.5}{\rm\cite{Bos1,Bos2} }
	Let $H$ be a bounded hoop  algebra. Then for every $x,y \in H$, we have:
	
	{\rm (i)}    $x\le x''$, $x\odot x'= 0$, $x'''= x'$, $x'\le x \rightarrow y$, 
	
	{\rm (ii)}  
	If $x\le y $, then $y'\le x'$, 
	
	{\rm (iii)}  $x\rightarrow y' = y \rightarrow x' = (x\odot y)'$,
	
	{\rm (iv)}   $x\odot y = 0$ if and only if $x\le y'$.
\end{prop}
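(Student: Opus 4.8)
The plan is to read $x'=x\rightarrow 0$ throughout and to derive every clause directly from the order-theoretic catalogue of Theorem~\ref{1.4} together with the defining axioms (H1)--(H4), using only that $0$ is the least element. The single structural fact that organizes the whole argument is that the negation $'$ is order-reversing, which is precisely clause (ii); I would prove it first, since it is needed to pin down the triple negation appearing in clause (i). Concretely, if $x\le y$, i.e. $x\rightarrow y=1$, then Theorem~\ref{1.4}(xv) specialized to $z=0$ gives $y\rightarrow 0\le x\rightarrow 0$, that is $y'\le x'$.

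Clauses (iii) and (iv) are then immediate and logically independent of the rest. For (iv) I would invoke the residuation adjunction Theorem~\ref{1.4}(ii) with $z=0$: we have $x\odot y\le 0$ iff $x\le y\rightarrow 0=y'$, and since $0$ is least, $x\odot y\le 0$ is the same as $x\odot y=0$. For (iii) I would use only axiom (H4) and commutativity of $\odot$: unfolding, $(x\odot y)'=(x\odot y)\rightarrow 0=x\rightarrow(y\rightarrow 0)=x\rightarrow y'$ by (H4), while applying (H4) to $y\odot x$ and using $x\odot y=y\odot x$ yields $(x\odot y)'=y\rightarrow x'$ as well, giving the full chain of equalities.

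For clause (i) I would treat the four assertions separately. The bound $x\le x''$ is Theorem~\ref{1.4}(viii) read with the element playing the role of $y$ equal to our $x$ and the role of $x$ equal to $0$, so that $x\le(x\rightarrow 0)\rightarrow 0=x''$. The identity $x\odot x'=0$ follows from Theorem~\ref{1.4}(iv) with $y=0$, which gives $x\odot(x\rightarrow 0)\le 0$, hence equality. The inequality $x'\le x\rightarrow y$ comes from monotonicity in the second argument, Theorem~\ref{1.4}(xiv): from $0\le y$ we obtain $x\rightarrow 0\le x\rightarrow y$. The only clause that requires two opposite inequalities, and hence is the main (mild) obstacle, is $x'''=x'$: I would get $x'\le x'''$ by applying the bound $x\le x''$ to the element $x'$ in place of $x$, and conversely apply the order-reversal (ii) to $x\le x''$ to obtain $(x'')'\le x'$, i.e. $x'''\le x'$; antisymmetry of $\le$ then closes the argument. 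Beyond this, no step needs more than substituting into the listed properties, so once clause (ii) is available the remainder is essentially bookkeeping.
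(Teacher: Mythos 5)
Your proof is correct in every clause: (ii) from Theorem~\ref{1.4}(xv) with $z=0$; (iv) from the adjunction Theorem~\ref{1.4}(ii) with $z=0$; (iii) from (H4) and commutativity; and the four parts of (i) from Theorem~\ref{1.4}(viii), (iv), (xiv), together with the antisymmetry argument combining $x\le x''$ (applied to $x'$) and order-reversal for $x'''=x'$. Note that the paper itself gives no proof of this proposition --- it is imported from Bosbach's work \cite{Bos1,Bos2} --- so your argument simply supplies the standard derivation from the residuation catalogue of Theorem~\ref{1.4}, which is exactly how this result is established in the literature.
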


\begin{lem}\label{1.57}
	Let $H$ be a bounded hoop algebra. Then 
	$x'' \odot y'' \le (x \odot y)''$, for every $x,y \in H$. 
\end{lem}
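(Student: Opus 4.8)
The plan is to rephrase the desired inequality as the vanishing of a product and then peel it down, step by step, to the elementary identities $a\odot a'=0$ and $a\odot(a\rightarrow b)\le b$. First I would observe, via Proposition~\ref{1.5}(iv), that $x''\odot y''\le (x\odot y)''$ is equivalent to $(x''\odot y'')\odot (x\odot y)'=0$: indeed $(x\odot y)''=\big((x\odot y)'\big)'$, and the characterization $a\le b'\Leftrightarrow a\odot b=0$ turns the order statement into the claim that a single product is the least element. This reduction is the conceptual core; everything afterward is a guided computation.

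Next, using Proposition~\ref{1.5}(iii) I would rewrite $(x\odot y)'=y\rightarrow x'$, so the target becomes $x''\odot\big(y''\odot (y\rightarrow x')\big)=0$ after regrouping by associativity and commutativity. The heart of the argument is the intermediate claim that $y''\odot (y\rightarrow x')\le x'$. To prove it I would again invoke Proposition~\ref{1.5}(iv), reducing the claim to $\big(y''\odot (y\rightarrow x')\big)\odot x=0$; then, using $y\rightarrow x'=x\rightarrow y'$ (Proposition~\ref{1.5}(iii)) together with the residuation inequality $x\odot (x\rightarrow y')\le y'$ (Theorem~\ref{1.4}(iv)) and monotonicity of $\odot$ (Theorem~\ref{1.4}(xiii)), this product is bounded by $y''\odot y'$, which is $0$ because $y'=(y'')'$ (from $y'''=y'$) and $a\odot a'=0$ (Proposition~\ref{1.5}(i)).

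Finally, with the intermediate claim in hand, monotonicity gives $x''\odot\big(y''\odot (y\rightarrow x')\big)\le x''\odot x'=0$, once more invoking $x'=(x'')'$ and $a\odot a'=0$; hence $(x''\odot y'')\odot (x\odot y)'=0$ and the lemma follows. I expect the main obstacle to be purely organizational: keeping the two nested applications of the equivalence ``$a\le b'\Leftrightarrow a\odot b=0$'' straight, and choosing the right associativity/commutativity regrouping at each stage so that the elementary facts $a\odot a'=0$, $x'''=x'$, and $a\odot(a\rightarrow b)\le b$ can be applied cleanly. No genuinely deep step is involved once the problem is recast as a product-is-zero statement.
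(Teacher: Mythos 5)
Your proof is correct: every step checks against the paper's toolkit (Proposition~\ref{1.5}(i),(iii),(iv), Theorem~\ref{1.4}(iv),(xiii), plus associativity and commutativity of $\odot$), and the implicit use of ``$\le 0$ implies $=0$'' is legitimate in a bounded hoop. The packaging, however, differs from the paper's. The paper works entirely at the level of implications: from $x\odot y\le(x\odot y)''$ it residuates (Theorem~\ref{1.4}(ii)) to $x\le y\rightarrow(x\odot y)''$, rewrites that implication via the swap identity $a\rightarrow b'=b\rightarrow a'$ and $y'''=y'$ into $y''\rightarrow(x\odot y)''$, residuates back to obtain $x\odot y''\le(x\odot y)''$, and then repeats the same maneuver on $x$. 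You instead recast the goal as the vanishing of the product $(x''\odot y'')\odot(x\odot y)'$ and peel it down to $a\odot a'=0$ using $a\odot(a\rightarrow b)\le b$. The two arguments are structurally parallel: your intermediate claim $y''\odot(y\rightarrow x')\le x'$ is, after unwinding Proposition~\ref{1.5}(iii) and (iv), exactly the paper's intermediate inequality $x\odot y''\le(x\odot y)''$ in annihilator form, and both proofs then upgrade $x$ to $x''$ by a second, symmetric pass. What your formulation buys is that only the specialization of residuation at $0$ (Proposition~\ref{1.5}(iv)) is needed, together with the most elementary facts about negation; what the paper's formulation buys is brevity, since general residuation plus the double application of $a\rightarrow b'=b\rightarrow a'$ does all the shifting in a single displayed line and avoids the detour through zero products.
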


\begin{proof}
Let $x,y \in H$. By  Proposition~\ref{1.5}(i), $x \odot y \le (x \odot y)''$. Now according to  Theorem~\ref{1.4}(ii), 	$x \le y \rightarrow (x \odot y)''$. 
	Therefore, by items (i) and (iii) of Proposition~\ref{1.5},
	\[
	x \le y \rightarrow (x \odot y)'' = (x \odot y)' \rightarrow y' = (x \odot y)' \rightarrow y''' = y'' \rightarrow (x \odot y)''
	\] 
Now, in view of  Theorem~\ref{1.4}(ii), $x \odot y'' \le (x \odot y)''$.  In a similar way we obtain $x'' \odot y'' \le (x \odot y)''$.	
\end{proof}

\begin{prop}\label{1.60}{\rm \cite{GeLePr}}
	Let $H$ be a bounded hoop  algebra. Then $H$ has $(DNP)$ if and only if $(x\rightarrow y)\rightarrow y = (y \rightarrow x)\rightarrow x,$ for every $x,y \in H$.
\end{prop}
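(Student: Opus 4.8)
The plan is to prove the two implications separately, treating the backward direction as an immediate specialization and reserving the genuine work for the forward direction, where the double negation property is used to convert arrows into products via the involution $x\mapsto x'$.

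For the converse, suppose the identity $(x\rightarrow y)\rightarrow y=(y\rightarrow x)\rightarrow x$ holds for all $x,y\in H$. I would simply set $y=0$. Since $0$ is the least element we have $0\le x$, hence $0\rightarrow x=1$, so the right-hand side collapses to $(0\rightarrow x)\rightarrow x=1\rightarrow x=x$ by Theorem~\ref{1.4}(vi), while the left-hand side is $(x\rightarrow 0)\rightarrow 0=x''$. Thus $x''=x$ for every $x$, which is exactly $(DNP)$.

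For the forward direction, assume $(DNP)$, i.e. $x''=x$ for all $x$. The key preliminary fact I would establish is the identity $x'\rightarrow y'=y\rightarrow x$: applying Proposition~\ref{1.5}(iii) in the form $a\rightarrow b'=b\rightarrow a'$ with $a=x'$ and $b=y$ gives $x'\rightarrow y'=y\rightarrow x''=y\rightarrow x$, where the last step uses $x''=x$. The crucial observation is then that the symmetric meet $x'\wedge y'$ mediates between the two products appearing in the statement. Indeed, by Theorem~\ref{1.4}(i),
\[
x'\wedge y'=x'\odot(x'\rightarrow y')=x'\odot(y\rightarrow x)=(y\rightarrow x)\odot x',
\]
and reading the same meet in the reversed order, again by Theorem~\ref{1.4}(i) together with $y'\rightarrow x'=x\rightarrow y''=x\rightarrow y$,
\[
x'\wedge y'=y'\odot(y'\rightarrow x')=y'\odot(x\rightarrow y)=(x\rightarrow y)\odot y'.
\]
Comparing the two displays yields $(x\rightarrow y)\odot y'=(y\rightarrow x)\odot x'$.

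Finally I would apply the negation to transport this equality of products back to the desired equality of arrows. Using Proposition~\ref{1.5}(iii) in the form $(a\odot b)'=a\rightarrow b'$ together with $y''=y$ and $x''=x$, one gets $\bigl((x\rightarrow y)\odot y'\bigr)'=(x\rightarrow y)\rightarrow y''=(x\rightarrow y)\rightarrow y$ and likewise $\bigl((y\rightarrow x)\odot x'\bigr)'=(y\rightarrow x)\rightarrow x$; negating both sides of the product identity then gives $(x\rightarrow y)\rightarrow y=(y\rightarrow x)\rightarrow x$. The main obstacle is the forward direction, and within it the one genuinely creative step is recognizing that $x'\wedge y'$ equals both products simultaneously. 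Once the involution identity $x'\rightarrow y'=y\rightarrow x$ is secured, everything else is routine bookkeeping with Proposition~\ref{1.5} and Theorem~\ref{1.4}(i).
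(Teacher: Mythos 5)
Your proof is correct. Note that the paper does not actually prove this proposition --- it is quoted from \cite{GeLePr} --- so there is no internal argument to compare against; what you have written is a valid, self-contained derivation from facts the paper does state. The backward direction by specializing $y=0$ (using $0\le x$, hence $0\rightarrow x=1$, and Theorem~\ref{1.4}(vi)) is exactly right. In the forward direction, each step checks out: the contraposition identity $x'\rightarrow y'=y\rightarrow x$ follows from Proposition~\ref{1.5}(iii) with $x''=x$; the two readings of the meet $x'\wedge y'$ are legitimate since Theorem~\ref{1.4}(i) together with axiom (H3) of Definition~\ref{1.1} gives $x'\odot(x'\rightarrow y')=y'\odot(y'\rightarrow x')$, yielding $(x\rightarrow y)\odot y'=(y\rightarrow x)\odot x'$; and negating both sides via $(a\odot b)'=a\rightarrow b'$ (Proposition~\ref{1.5}(iii) again) plus (DNP) recovers $(x\rightarrow y)\rightarrow y=(y\rightarrow x)\rightarrow x$. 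This is a clean argument that could serve as the missing proof in the paper.
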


\begin{rmk}\label{1.63}
Let $H$ be a bounded $\vee$-hoop  algebra with $(DNP)$. Then, according to Definition~\ref{1.55} and Proposition~\ref{1.60}, we have $x \vee y = (x\rightarrow y )\rightarrow y = (y \rightarrow x)\rightarrow x.$ 
\end{rmk}

\begin{prop}\label{1.61}
	Let $H$ be a bounded hoop  algebra with $(DNP)$. Then for every $x,y \in H$,
	
	{\rm (i)}  $x \rightarrow y = y'\rightarrow x' $,
	
	{\rm (ii)} $x \rightarrow y = (x \odot y')\rightarrow 0 $,
	
	{\rm (iii)} 	If $H$ is a $\vee$-hoop  algebra, then 	$x \rightarrow (x \odot y) = x' \vee y$,
	
	{\rm (iv)} 
	If $H$ is a  $\vee$-hoop  algebra, then $(x \wedge y)\rightarrow 0 = x' \vee y'$.
\end{prop}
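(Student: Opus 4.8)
The plan is to read parts (i) and (ii) off a single substitution into the identity $(a\odot b)'=a\rightarrow b'=b\rightarrow a'$ from Proposition~\ref{1.5}(iii), and then to bootstrap (iii) and (iv) from (i) together with the description of $\vee$ supplied by Remark~\ref{1.63}. The only serious ingredient beyond $(DNP)$ and Proposition~\ref{1.5} is that, in a $\vee$-hoop, $\vee$ is a genuine least upper bound.

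First I would dispatch (i) and (ii) simultaneously. Substituting $b=y'$ into Proposition~\ref{1.5}(iii) gives $x\rightarrow y''=y'\rightarrow x'=(x\odot y')'$. Since $H$ has $(DNP)$ we have $y''=y$, so the chain reads $x\rightarrow y=y'\rightarrow x'=(x\odot y')'=(x\odot y')\rightarrow 0$. The first equality is precisely (i), and the equality of the first and last members is precisely (ii); no further computation is required.

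Next, for (iii) I would rewrite the left-hand side using (i): since $a\rightarrow b=b'\rightarrow a'$, we get $x\rightarrow(x\odot y)=(x\odot y)'\rightarrow x'$, and Proposition~\ref{1.5}(iii) turns $(x\odot y)'$ into $x\rightarrow y'$, so the left-hand side equals $(x\rightarrow y')\rightarrow x'$. For the right-hand side, Remark~\ref{1.63} (available because $H$ is a $\vee$-hoop with $(DNP)$) gives $x'\vee y=(y\rightarrow x')\rightarrow x'$, and applying (i) once more with $(DNP)$ rewrites $y\rightarrow x'=x''\rightarrow y'=x\rightarrow y'$. Hence both sides coincide with $(x\rightarrow y')\rightarrow x'$, proving (iii).

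Finally, (iv) is the only part that is not a direct substitution, and it is where I expect the real (if modest) work to sit: it is a De~Morgan law $(x\wedge y)'=x'\vee y'$ that I would prove by antisymmetry. For ``$\ge$'', from $x\wedge y\le x,y$ and the order-reversal of the negation (Proposition~\ref{1.5}(ii)) we obtain $x',y'\le(x\wedge y)'$, whence $x'\vee y'\le(x\wedge y)'$ because $\vee$ is the genuine join of a $\vee$-hoop. For ``$\le$'', I would start from $x'\le x'\vee y'$ and $y'\le x'\vee y'$; applying negation together with $(DNP)$ gives $(x'\vee y')'\le x''=x$ and $(x'\vee y')'\le y''=y$, so $(x'\vee y')'\le x\wedge y$; applying negation once more and using $(DNP)$ yields $(x\wedge y)'\le(x'\vee y')''=x'\vee y'$. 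The two inequalities together give (iv). The subtle point throughout is simply to exploit that in a $\vee$-hoop $\vee$ is the least upper bound and to track the order-reversing involution $x\mapsto x'$; everything else reduces to Proposition~\ref{1.5}(iii) and Remark~\ref{1.63}.
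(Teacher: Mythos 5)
Your proof is correct, and for parts (i)--(iii) it follows essentially the paper's own route: (i) and (ii) both come out of Proposition~\ref{1.5}(iii) plus $(DNP)$ (the paper runs the computation through (H4) rather than a single substitution, but the content is identical), and your proof of (iii), rewriting both sides to $(x\rightarrow y')\rightarrow x'$ via Remark~\ref{1.63} and part (i), is the same argument the paper gives in a slightly different arrangement. The genuine divergence is in (iv). The paper proves it in one stroke by applying Proposition~\ref{1.8}(i) with $z=0$: $(x'\vee y')' = (x'\rightarrow 0)\wedge(y'\rightarrow 0) = x''\wedge y'' = x\wedge y$, and then negating both sides using $(DNP)$. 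You instead prove the De~Morgan identity by antisymmetry, using only the order-reversal of negation (Proposition~\ref{1.5}(ii)), $(DNP)$, and the universal properties of $\wedge$ and $\vee$. Both are sound: the paper's argument is shorter but leans on the lattice identity $(a\vee b)\rightarrow z=(a\rightarrow z)\wedge(b\rightarrow z)$, whereas yours is more elementary and self-contained, needing nothing beyond the order structure and the involution; the trade-off is two inequalities in place of one identity. One small point worth making explicit in your write-up: the final step $(x'\vee y')''=x'\vee y'$ is an application of $(DNP)$ to the element $x'\vee y'$ (the paper makes the same silent use when it passes from $(x'\vee y')'=x\wedge y$ to $x'\vee y'=(x\wedge y)'$), so no additional hypothesis is hiding there.
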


\begin{proof}
(i) We have $y'\rightarrow x' = y' \rightarrow (x \rightarrow 0) = (y'\odot x)\rightarrow 0 = (y'\odot x)'$
	Therefore, according to   Proposition~\ref{1.5}(iii), $y'\rightarrow x' =  (y'\odot x)' = x \rightarrow y'' = x \rightarrow y.$\\
(ii) By   Proposition~\ref{1.5}(iii), it is easily proven.\\
(iii) According to the Remark
\ref{1.63}
and item (i), we have: 
	\begin{align*}
		x' \vee y & = (y \rightarrow x') \rightarrow x' \\ & =  (y \rightarrow (x\rightarrow 0 )) \rightarrow (x \rightarrow 0) \\ & =  ((x \odot y)  \rightarrow 0) \rightarrow (x \rightarrow 0) \\& = x \rightarrow (x \odot y) 
	\end{align*}
(iv) 	According to   Proposition~\ref{1.8}(i), we have: 
	\begin{align*}
		(x' \vee y')' & = (x' \vee y') \rightarrow 0 \\ & = (x'\rightarrow 0) \wedge (y' \rightarrow 0)  \\& = ((x \rightarrow 0 )\rightarrow 0) \wedge ((y \rightarrow 0) \rightarrow 0)  \\ & = x'' \wedge y'' \\ & = x \wedge y  
	\end{align*}
Therefore, 	$ x' \vee y' = (x \wedge y)'$.
\end{proof}

\begin{defn}\label{1.17}{\rm \cite[Definition 1.12]{BlFe2}}
A \textit{Wajsberg hoop}  is a hoop  algebra that satisfies the following condition of antipode (T):
\[ (T)\;\;\;\; (x\rightarrow y)\rightarrow y = (y\rightarrow x)\rightarrow x, \quad \forall x,y \in H \]
Any hoop which satisfies (T) is in fact a lattice, and the operation $\vee$ for every $x,y \in H$ is equal to $x\vee y = (x \rightarrow y )\rightarrow y$.
\end{defn}

\begin{cor}\label{1.18}
Every bounded hoop  algebra with $(DNP)$, is a  Wajsberg hoop  and conversely. 
\end{cor}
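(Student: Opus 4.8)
The plan is to recognize this corollary as essentially a direct translation between two earlier results rather than a fresh computation. The key observation is that the defining condition (T) of a Wajsberg hoop in Definition~\ref{1.17} is \emph{verbatim} the identity $(x\rightarrow y)\rightarrow y = (y\rightarrow x)\rightarrow x$ that already appears in Proposition~\ref{1.60}. Consequently, the whole argument reduces to invoking Proposition~\ref{1.60} in both directions and matching its hypothesis against Definition~\ref{1.17}.

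First I would handle the forward implication. Assuming $H$ is a bounded hoop algebra with $(DNP)$, Proposition~\ref{1.60} gives that $(x\rightarrow y)\rightarrow y = (y\rightarrow x)\rightarrow x$ holds for all $x,y\in H$. Since this is precisely condition (T) of Definition~\ref{1.17}, the hoop $H$ is a Wajsberg hoop. For the converse I would start from a bounded hoop algebra $H$ that is a Wajsberg hoop; by Definition~\ref{1.17} it satisfies (T), i.e. $(x\rightarrow y)\rightarrow y = (y\rightarrow x)\rightarrow x$ for all $x,y\in H$. Feeding this identity into the reverse direction of Proposition~\ref{1.60} yields that $H$ has $(DNP)$, which closes the equivalence.

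There is no genuine obstacle here: Proposition~\ref{1.60} carries all the substantive weight, and this corollary is simply its restatement in the language of Wajsberg hoops. The one point worth flagging is the standing boundedness hypothesis. The property $(DNP)$ presupposes a least element $0$ so that $x'=x\rightarrow 0$ is even defined, which is exactly why the statement is confined to bounded hoop algebras—whereas the notion of a Wajsberg hoop, as given in Definition~\ref{1.17}, does not by itself demand a bound. Thus the ``conversely'' should be read within the fixed class of bounded hoop algebras, and with that understanding the two directions are perfectly symmetric applications of Proposition~\ref{1.60}.
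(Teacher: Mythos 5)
Your proposal is correct and is essentially identical to the paper's own proof, which likewise derives the corollary as a direct consequence of Proposition~\ref{1.60} and Definition~\ref{1.17}. Your extra remark on the boundedness hypothesis is a sensible clarification but does not change the argument.
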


\begin{proof}
It is a direct consequence of  Proposition~\ref{1.60} and Definition~\ref{1.17}. 
\end{proof}

\begin{defn}\label{2.1}\cite{BlFe2}
Let $H$ be a hoop  algebra. A subset $F$ of $H$ is called a \textit{filter} of $H$ if it satisfies the following conditions:

(F1) $1\in F$,

(F2) $x\odot y \in F$ for all $x,y\in F$,

(F3) if $x\le y$ and $x\in F$, then $y\in F$ for all $x,y\in H$.
\end{defn}
%\marginpar{\fbox{We need def. of filter generated by an element here.}}
Let $H$ be a hoop  algebra and $\emptyset \neq X \subseteq H$. The intersection of all filters of $H$ containing $X$ is denoted by $\langle X \rangle$ or $F_g(X)$ and is equal to:
\begin{align*}
	\langle X \rangle &= \Big\{a\in H: \exists   n\in \mathbb{N}, x_1, x_2, \cdots , x_n\in X,\quad x_1 \odot x_2 \odot \cdots \odot x_n \le a\Big\}\\
	&= \Big\{a\in H: \exists   n\in \mathbb{N}, x_1, x_2, \cdots , x_n\in X, x_1 \rightarrow( x_2 \rightarrow( \cdots \rightarrow (x_n \rightarrow a) \cdots )) = 1 \Big\}.
\end{align*}
In particular, for each member $x\in H$ we have:
\begin{align*}
	\langle x\rangle=\{a\in H: \exists   n\in \mathbb{N}, x^n\le a \} = \{a\in H: \exists   n\in \mathbb{N}, x^n \rightarrow a = 1 \}.
\end{align*}
A filter $F$ of $H$ is called  proper  if $F \neq H$. It is clear that $H$ and $\{1\}$ are trivial filters of $H$. It can be seen that if $H$ is a bounded hoop  algebra, then a filter is proper if and only if it does not contain $0$. We denote the set of all filters of the hoop  algebra $H$ by $\mathcal{F}(H)$.

%	\label{5.55}
Recall {\cite{Kon}} that given a  filter  $F$  of a hoop algebra $H$, one can define a congruence $\equiv_F$ on  $H$  by
	\[
	x \equiv_F y \Longleftrightarrow x \rightarrow y \quad \textrm{and} \quad y \rightarrow x \in F.
	\]
We will denote the class of an element $x\in H$ by $\frac{x}{F}$ or just by $[x]$. Since the class of all hoops forms a variety, any quotient structure 
	\[\frac{H}{F}= \{\frac{x}{F}:x \in H \}\]
 of $H$ by $\equiv_F$ is also a hoop by the following definition:
\[\frac{x}{F} \odot \frac{y}{F} := \frac{x \odot y}{F},  \quad\frac{x}{F} \rightarrow \frac{y}{F} := \frac{x \rightarrow y}{F}, \quad \textrm{and} \quad 1 := \frac{1}{F}\]
for all $\frac{x}{F} , \frac{y}{F} \in \frac{H}{F}$. In the hoop algebra $(\frac{H}{F};\odot, \rightarrow, 1_F)$,  we also have 
$\frac{x}{F} \le \frac{y}{F}$ if and only if $x \rightarrow y \in F.$

\begin{defn}\label{5.3}{\cite{ABAK1}}
Let $H$ be a $\vee$-hoop  algebra.  A proper filter $P$ of $H$  is  prime  if for every $x,y\in H$, $x\vee y \in P$ implies $x \in P$ or $y \in P$. A proper filter $M$ of the hoop  algebra $H$ is called a \textit{maximal} filter if it is not properly contained in any other filter.
\end{defn}

Let $L$ be a lattice with 0 and 1. For $a\in L$, we say  $b\in L$ is a \emph{complement} of $a$ if $a \vee b = 1$ and $a \wedge b = 0$. If $a$ has a
unique complement, we denote this complement by $a^*$.  The set of all complemented elements in $H$ is denoted by $B(H)$ and is called the Boolean center of $H$.

\begin{lem}\label{4.26}{\rm  \cite[Lemma 1.13]{DPi}}
Let $H$ be a bounded hoop  algebra, $x\in B(H)$, and $x^*$ be the complement of $x$. Then, $x'= x^*$ and $x''=x$.
\end{lem}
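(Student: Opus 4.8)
The plan is to prove the two claimed equalities by bracketing $x'$ between $x^*$ and itself, i.e.\ by establishing $x^* \le x'$ and $x' \le x^*$ separately, and then to bootstrap the second equality $x'' = x$ from the first. For the inequality $x^* \le x'$, I would start from the complement relation $x \wedge x^* = 0$. Since $x \odot x^* \le x$ and $x \odot x^* \le x^*$ by Theorem~\ref{1.4}(iii), it follows that $x \odot x^* \le x \wedge x^* = 0$, hence $x \odot x^* = 0$; residuation (Theorem~\ref{1.4}(ii)) then yields $x^* \le x \rightarrow 0 = x'$.

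The reverse inequality $x' \le x^*$ is where the complement hypothesis is genuinely used, and I expect this to be the main obstacle. The idea is to multiply $x'$ by $1 = x \vee x^*$ and distribute: $x' = x' \odot 1 = x' \odot (x \vee x^*) = (x' \odot x) \vee (x' \odot x^*)$. Since $x \odot x' = 0$ by Proposition~\ref{1.5}(i), the first summand vanishes, leaving $x' = x' \odot x^* \le x^*$ by Theorem~\ref{1.4}(iii). The delicate point is the distributivity of $\odot$ over the join $x \vee x^*$: although Proposition~\ref{1.8}(ii) is stated under the existence of \emph{arbitrary} joins, the single join $x \vee x^* = 1$ exists precisely because $x$ is complemented, and distributivity over any existing join follows from residuation (the map $x' \odot (-)$ has a right adjoint, namely $x' \rightarrow (-)$, and hence preserves whatever suprema exist), so Theorem~\ref{1.4}(ii) alone suffices here. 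Combining the two inequalities gives $x' = x^*$.

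Finally, for $x'' = x$ I would rerun the same argument with $x^*$ in place of $x$. The complement relations $x \vee x^* = 1$ and $x \wedge x^* = 0$ are symmetric in $x$ and $x^*$, so $x$ is itself a complement of $x^*$ and thus $x^* \in B(H)$; applying the two steps above to $x^*$ then yields $(x^*)' = x$. Since we have already shown $x' = x^*$, this reads $x'' = (x')' = (x^*)' = x$, as required. Carrying out the computation a second time in this way is preferable to quoting uniqueness of complements, because the needed equality $(x^*)' = x$ is produced directly rather than being inferred from the (tacit) assumption that $x$ is the unique complement of $x^*$.
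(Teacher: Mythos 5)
The paper never proves this lemma: it is imported verbatim from \cite[Lemma 1.13]{DPi}, so there is no in-paper argument to compare yours against, and your proposal has to stand on its own --- which it does. The inequality $x^*\le x'$ is handled correctly: $x\odot x^*\le x\wedge x^*=0$ by Theorem~\ref{1.4}(iii), and residuation (Theorem~\ref{1.4}(ii)) gives $x^*\le x\rightarrow 0=x'$. The genuinely delicate step is, as you say, the distributivity $x'\odot(x\vee x^*)=(x'\odot x)\vee(x'\odot x^*)$, and your justification is sound: since $y\mapsto x'\odot y$ is left adjoint to $y\mapsto x'\rightarrow y$ by Theorem~\ref{1.4}(ii), an element $z$ is an upper bound of $\{x'\odot x,\ x'\odot x^*\}$ exactly when $x'\rightarrow z$ is an upper bound of $\{x,x^*\}$, i.e.\ exactly when $x'\rightarrow z=1$, i.e.\ exactly when $x'\le z$; so the join on the right exists and equals $x'=x'\odot(x\vee x^*)$. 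The blanket hypothesis of Proposition~\ref{1.8} (existence of arbitrary joins) is indeed not needed --- preservation of the one existing join follows from residuation alone, exactly as you argue. With $x'\odot x=0$ (Proposition~\ref{1.5}(i)) the displayed identity collapses to $x'=x'\odot x^*\le x^*$, giving $x'=x^*$. Your final step is also clean: the complement conditions are symmetric in $x$ and $x^*$, so rerunning both inequalities for $x^*$ yields $(x^*)'=x$, whence $x''=(x')'=(x^*)'=x$; and, as you note, this avoids tacitly invoking uniqueness of complements --- in fact your argument \emph{proves} that uniqueness (any complement of $x$ must equal $x'$), which is what legitimizes the paper's notation $x^*$ in the first place.
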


\begin{lem}\label{4.22}{\rm \cite[Proposition 1.17]{DPi}}
Let $H$ be a bounded hoop  algebra and $x\in B(H)$. In this case, the following statements hold:

{\rm (i)} $x^2=x$ and $x''=x,$

{\rm (ii)} $x^2=x$ and $x'\rightarrow x= x,$

{\rm (iii)} For all $a\in H,~(x\rightarrow a) \rightarrow x = x,$

{\rm (iv)}  $x'\wedge x = 0.$
\end{lem}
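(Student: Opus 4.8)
The plan is to extract everything from the two defining relations of a complemented element together with Lemma~\ref{4.26}. By Lemma~\ref{4.26} we already have $x'=x^*$ and $x''=x$, which immediately settles the identity $x''=x$ appearing in (i) and (ii). Moreover, since $x^*$ is the lattice complement of $x$, the defining equalities of a complement read $x\vee x'=1$ and $x\wedge x'=0$; the latter is exactly statement (iv), so (iv) requires no further work. The remaining content is therefore $x^2=x$, $x'\rightarrow x=x$, and the uniform identity in (iii), and I would derive these from the two relations $x\vee x'=1$ and $x\wedge x'=0$.

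For (ii) I would argue purely order-theoretically. Using (H3) with the pair $(x',x)$ and then Theorem~\ref{1.4}(i),
\[
x'\odot(x'\rightarrow x)=x\odot(x\rightarrow x')=x\wedge x'=0,
\]
so by Proposition~\ref{1.5}(iv) we get $x'\rightarrow x\le (x')'=x''=x$, the last step again by Lemma~\ref{4.26}. Since Theorem~\ref{1.4}(v) gives the reverse inequality $x\le x'\rightarrow x$, we conclude $x'\rightarrow x=x$. For (iii) I would combine this with the estimate $x'\le x\rightarrow a$ from Proposition~\ref{1.5}(i): applying the antitonicity of $\rightarrow$ in its first argument (Theorem~\ref{1.4}(xv)) yields $(x\rightarrow a)\rightarrow x\le x'\rightarrow x=x$, while Theorem~\ref{1.4}(ix) gives $x\le (x\rightarrow a)\rightarrow x$; hence $(x\rightarrow a)\rightarrow x=x$ for every $a\in H$.

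The one step that genuinely needs the join structure, and which I expect to be the main obstacle, is the idempotency $x^2=x$. The order-theoretic manipulations above only yield $(x^2)''=x$ (via $x\wedge x'=x\odot(x^2)'=0$ and two applications of Proposition~\ref{1.5}(ii)), which is strictly weaker than $x^2=x$; to bridge this gap I would use distributivity of $\odot$ over the join. Concretely, using $x\odot x'=0$ from Proposition~\ref{1.5}(i) together with Proposition~\ref{1.8}(ii),
\[
x=x\odot 1=x\odot(x\vee x')=(x\odot x)\vee(x\odot x')=x^2\vee 0=x^2,
\]
which finishes (i). Here I am tacitly using that $x\vee x'$ is a genuine join equal to $1$ — guaranteed because $x\in B(H)$ — so that Proposition~\ref{1.8}(ii) applies; if one wants to stay inside an arbitrary bounded hoop, this is precisely the point at which the ambient $\vee$-structure (or the hypothesis that the relevant joins exist) must be invoked.
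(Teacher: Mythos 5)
The paper contains no proof of this lemma to compare against: it is imported from the literature, cited as \cite[Proposition 1.17]{DPi}, and used as a black box, so your proposal must be judged on its own merits. It holds up: it is a correct, self-contained derivation, essentially the standard Boolean-center argument for residuated structures transplanted to bounded hoops. Reading off $x''=x$ and (iv) from Lemma~\ref{4.26} together with the complement equalities is right; your proof of (ii) via (H3), Theorem~\ref{1.4}(i), Proposition~\ref{1.5}(iv) (applied after commuting $\odot$) and Theorem~\ref{1.4}(v) is correct; and (iii) follows from (ii) exactly as you say, using $x'\le x\rightarrow a$ with Theorem~\ref{1.4}(ix) and (xv). You also sequence the parts in the right order ((iv) before (ii), (ii) before (iii)), so there is no circularity. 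The only delicate point is the one you flagged yourself: Proposition~\ref{1.8} is stated under the blanket hypothesis ``if there exist arbitrary joins,'' while you need distributivity only over the single join $x\vee x'=1$. This is harmless, and in fact you can avoid citing Proposition~\ref{1.8} altogether: if $u$ is any upper bound of $\{x\odot x,\,x\odot x'\}=\{x^2,0\}$, then residuation (Theorem~\ref{1.4}(ii)) gives $x\le x\rightarrow u$ and $x'\le x\rightarrow u$, so $x\rightarrow u$ is an upper bound of $\{x,x'\}$; since the least upper bound of $\{x,x'\}$ is $1$, we get $x\rightarrow u=1$, i.e.\ $x\le u$, and taking $u=x^2$ yields $x\le x^2$, while $x^2\le x$ is Theorem~\ref{1.4}(iii). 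Note also that the existence of the join $x\vee x'=1$ is not an additional $\vee$-hoop hypothesis but is built into what $x\in B(H)$ means (complementation presupposes it), so your proof needs nothing beyond the stated assumptions.
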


\begin{cor}\label{4.25}
For any bounded hoop  algebra $H$, $B(H)= \id(H)\cap \reg(H)$.
\end{cor}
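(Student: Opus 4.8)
The plan is to prove the equality by establishing the two inclusions separately; the inclusion $B(H)\subseteq \id(H)\cap\reg(H)$ is immediate, while $\id(H)\cap\reg(H)\subseteq B(H)$ carries all the content. For the easy inclusion I would simply invoke Lemma~\ref{4.22}(i): if $x\in B(H)$, then $x^2=x$ and $x''=x$, so $x$ is both idempotent and regular, that is, $x\in\id(H)\cap\reg(H)$.

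For the reverse inclusion, fix $x$ with $x\odot x=x$ and $x''=x$, and I would show that $x'$ is a complement of $x$. First I would record the identity $x\to x'=x'$, which follows from (H4) together with idempotency, since $x\to x'=x\to(x\to 0)=(x\odot x)\to 0=x\to 0=x'$. The meet condition is then cheap: by Theorem~\ref{1.4}(i) and Proposition~\ref{1.5}(i), $x\wedge x'=x\odot(x\to x')=x\odot x'=0$.

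The hard part will be the join condition $x\vee x'=1$, and the key auxiliary identity is $x'\to x=x$. To obtain it, I would use Theorem~\ref{1.4}(iv) to get $x'\odot(x'\to x)\le x$ and $x'\odot(x'\to x)\le x'$, so that $x'\odot(x'\to x)\le x\wedge x'=0$; Proposition~\ref{1.5}(iv) then yields $x'\to x\le x''=x$, while the reverse inequality $x\le x'\to x$ is Theorem~\ref{1.4}(v). Substituting both identities into the defining formula of $\vee$ from Proposition~\ref{1.6} gives $x\vee x'=((x\to x')\to x')\wedge((x'\to x)\to x)=(x'\to x')\wedge(x\to x)=1$ by (H2). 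Having verified $x\wedge x'=0$ and $x\vee x'=1$, the element $x'$ is a complement of $x$, whence $x\in B(H)$, completing the reverse inclusion and the proof.
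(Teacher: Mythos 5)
Your two easy steps match what the paper (implicitly) does: Corollary~\ref{4.25} is stated without any proof, and the results it is placed after (Lemma~\ref{4.26} and Lemma~\ref{4.22}, imported from \cite{DPi}) only supply \emph{necessary} conditions for membership in $B(H)$, i.e.\ exactly the inclusion $B(H)\subseteq \id(H)\cap\reg(H)$ that you also get from Lemma~\ref{4.22}(i). The reverse inclusion is the real content, the paper never argues it, and your chain of identities is correct and fully justified from the paper's toolkit: $x\to x'=x'$ by (H4) and idempotency, $x\wedge x'=x\odot x'=0$ by Theorem~\ref{1.4}(i) and Proposition~\ref{1.5}(i), and $x'\to x=x$ by Theorem~\ref{1.4}(iv),(v), Proposition~\ref{1.5}(iv) and regularity.

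There is, however, a genuine gap at the very last inference, where you pass from ``$x\wedge x'=0$ and $((x\to x')\to x')\wedge((x'\to x)\to x)=1$'' to ``$x'$ is a complement of $x$.'' This silently identifies the term of Proposition~\ref{1.6} with the lattice join, which is what the paper's definition of complement (and hence of $B(H)$) refers to. In an arbitrary bounded hoop that term is only an \emph{upper bound} of $x$ and $x'$ (by Theorem~\ref{1.4}(v),(viii)); it is the least upper bound precisely when $H$ is a $\vee$-hoop. The distinction is fatal in general: consider the ordinal sum of the four-element Boolean algebra $\{0,a,a',1\}$ and the two-element idempotent hoop $\{e,1\}$, i.e.\ the bounded hoop on $\{0,a,a',e,1\}$ with $0<a,a'<e<1$, where both pieces keep their own operations and $u\odot e=u$, $u\to e=1$, $e\to u=u$ for $u\in\{0,a,a'\}$. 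There $a$ is idempotent and regular and your term evaluates to $1$, yet every upper bound of $\{a,a'\}$ lies in $\{e,1\}$, so the order-theoretic join is $e\neq 1$ and $a$ has no complement at all; thus $\id(H)\cap\reg(H)\not\subseteq B(H)$ under the lattice reading. So your argument (and indeed the corollary itself, read literally) is valid only if either $H$ is additionally assumed to be a $\vee$-hoop, or $B(H)$ is defined via the term $\vee$ of Proposition~\ref{1.6} rather than the order-theoretic join; you need to state explicitly which reading you adopt.
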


\begin{defn}\label{2.15}{\cite{GeLePr}}
We call a hoop  algebra $H$ \textit{cancellative} if the monoid $(H;\odot,1)$ is cancellative.
\end{defn}

\begin{prop}\label{2.16} {\rm \cite[Proposition 4.1]{GeLePr}}
A hoop  algebra $H$ is cancellative  if and only if the following identity holds:
$${\rm (c)}\quad y\rightarrow (x\odot y)=x$$
\end{prop}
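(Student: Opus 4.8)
The plan is to prove the two implications separately, viewing identity (c) as a reformulation of the cancellation law. The reverse implication, that (c) forces cancellativity, is immediate. Assuming $y\rightarrow(x\odot y)=x$ for all $x,y$, suppose $a\odot c=b\odot c$ for some $a,b,c\in H$. Reading (c) once with $x=a,\ y=c$ and once with $x=b,\ y=c$ gives $a=c\rightarrow(a\odot c)$ and $b=c\rightarrow(b\odot c)$; since $a\odot c=b\odot c$ the two right-hand sides coincide, whence $a=b$, and so $(H;\odot,1)$ is cancellative.

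For the converse, which I expect to carry the substance, I would assume $(H;\odot,1)$ is cancellative and verify $y\rightarrow(x\odot y)=x$ for arbitrary $x,y\in H$. The key idea is to show that the two sides become equal after multiplying on the right by $y$, and then cancel $y$. To this end I would compute $\bigl(y\rightarrow(x\odot y)\bigr)\odot y$ using the meet formula of Theorem~\ref{1.4}(i), namely $p\wedge q=p\odot(p\rightarrow q)$: taking $p=y$ and $q=x\odot y$ yields $\bigl(y\rightarrow(x\odot y)\bigr)\odot y=y\wedge(x\odot y)$. Since $x\odot y\le y$ by Theorem~\ref{1.4}(iii), the meet simplifies, $y\wedge(x\odot y)=x\odot y$. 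Hence $\bigl(y\rightarrow(x\odot y)\bigr)\odot y=x\odot y$, and cancelling $y$ gives exactly $y\rightarrow(x\odot y)=x$, as required.

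The main obstacle, such as it is, lies entirely in recognizing the right computation in the forward direction: one must see that $y\rightarrow(x\odot y)$ is pinned down by its product with $y$, and that this product collapses to $x\odot y$ via the semilattice identity together with the inequality $x\odot y\le y$. Once this is set up, cancellativity finishes the argument. As a sanity check, Theorem~\ref{1.4}(x) already supplies $x\le y\rightarrow(x\odot y)$ unconditionally, so cancellativity is precisely what is needed to upgrade this inequality to the desired equality.
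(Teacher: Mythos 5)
Your proof is correct and complete in both directions: the computation $\bigl(y\rightarrow(x\odot y)\bigr)\odot y = y\wedge(x\odot y) = x\odot y$ via Theorem~\ref{1.4}(i),(iii) followed by cancellation of $y$ establishes (c) from cancellativity, and the converse is the immediate substitution you give. The paper itself supplies no proof here --- Proposition~\ref{2.16} is quoted from \cite{GeLePr} --- so there is nothing to compare against; your argument is the standard one and would serve as a self-contained proof of the cited result.
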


\begin{prop}\label{2.17}{\rm \cite[Proposition 4.2]{GeLePr}}
Let $H$ be a cancellative hoop  algebra. Then, for any $x,y,z\in H$:

{\rm (i)} $z\rightarrow x = (z\odot y )\rightarrow (x\odot y)$,

{\rm (ii)} $x\odot y\le z \odot y$ if and only if $x \le z$.
\end{prop}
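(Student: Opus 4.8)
The plan is to record the cancellation law first and then bootstrap part~(i) from part~(ii), so I would in fact establish (ii) before (i). By Proposition~\ref{2.16}, cancellativity of $H$ is equivalent to the identity (c) $y\rightarrow(x\odot y)=x$ for all $x,y\in H$, and this identity is the only point at which the cancellative hypothesis is used.

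For (ii), the backward implication is immediate: if $x\le z$ then $x\odot y\le z\odot y$ by the monotonicity of $\odot$, Theorem~\ref{1.4}(xiii). For the forward implication, suppose $x\odot y\le z\odot y$. Applying the monotonicity of $\rightarrow$ in its second argument (Theorem~\ref{1.4}(xiv)) gives $y\rightarrow(x\odot y)\le y\rightarrow(z\odot y)$, and rewriting both sides by the identity (c) turns this inequality directly into $x\le z$. This is the crux of the argument and the step that genuinely consumes the cancellative assumption.

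For (i), I would prove the two inequalities separately, in each case reducing to an $\odot$-inequality via residuation, Theorem~\ref{1.4}(ii). For $z\rightarrow x\le(z\odot y)\rightarrow(x\odot y)$, by residuation it suffices to verify $(z\rightarrow x)\odot(z\odot y)\le x\odot y$; but $(z\rightarrow x)\odot z\le x$ by Theorem~\ref{1.4}(iv), so multiplying by $y$ and using Theorem~\ref{1.4}(xiii) yields exactly this. For the reverse inequality, set $w=(z\odot y)\rightarrow(x\odot y)$. Residuation (Theorem~\ref{1.4}(ii)) gives $w\odot(z\odot y)\le x\odot y$, that is $(w\odot z)\odot y\le x\odot y$; now part~(ii) lets me cancel the common factor $y$ to obtain $w\odot z\le x$, and one final application of residuation gives $w\le z\rightarrow x$. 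Combining the two inequalities produces the equality asserted in (i).

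I expect no serious technical obstacle here beyond organizing the dependencies: the only subtle point is that (i) is not self-contained but must lean on the cancellation already extracted in (ii), so the proof should be sequenced with (ii) before (i), even though the statement lists them in the opposite order. Everything else is routine residuation and monotonicity bookkeeping drawn from Theorem~\ref{1.4}.
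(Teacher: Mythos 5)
Your proof is correct. Note that the paper itself gives no argument for this proposition at all: it is quoted verbatim from the literature (Proposition 4.2 of Georgescu--Leustean--Preoteasa on pseudo-hoops), so there is no in-paper proof to compare against; your proposal supplies a proof that the paper omits. Every step checks out against the tools the paper does provide: the backward half of (ii) is Theorem~\ref{1.4}(xiii); the forward half correctly isolates the one use of cancellativity, namely the identity (c) of Proposition~\ref{2.16} combined with Theorem~\ref{1.4}(xiv); and both inequalities in (i) follow by residuation (Theorem~\ref{1.4}(ii)), the first from $z\odot(z\rightarrow x)\le x$ (Theorem~\ref{1.4}(iv)) multiplied by $y$, the second by cancelling $y$ via part (ii). Your observation that (i) is not self-contained but depends on (ii) — so the parts must be proved in reverse order — is exactly the right structural point, and the resulting argument is clean and complete.
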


\begin{defn}\label{2.18}{\cite{GeLePr}}
We call a hoop algebra $H$ \textit{basic} if the following condition holds for every $x,y,z\in H$:
$${\rm (BH)}\;\;\;\;(x\rightarrow y)\rightarrow z \le ((y\rightarrow x)\rightarrow z) \rightarrow z$$
\end{defn}

\begin{lem}\label{2.19}{\rm \cite{GeLePr}}
Assume $H$ is a basic hoop algebra. Then, for every $x,y,z\in H$:

{\rm (i)} $(x\rightarrow y)\vee(y \rightarrow x) = 1$, 

{\rm (ii)} $x\rightarrow y = (x \vee y) \rightarrow y$,
	
{\rm (iii)} 
$(x\vee y) \rightarrow z = (x \rightarrow z) \wedge (y \rightarrow z)$,
	
{\rm (iv)} $x\odot (y \wedge z) = (x \odot y) \wedge (x \odot z)$,
	
{\rm (v)} $(x\rightarrow y) \rightarrow (y \rightarrow x) = y \rightarrow x$. 
\end{lem}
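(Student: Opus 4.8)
The plan is to prove the five identities in the order (v), (i), (iii), (ii), (iv), since each later item leans on an earlier one. The natural entry point is (v), which should come almost directly from the defining inequality (BH). First I would instantiate (BH) at $z:=y\rightarrow x$: the right-hand side becomes $((y\rightarrow x)\rightarrow(y\rightarrow x))\rightarrow(y\rightarrow x)$, which collapses to $1\rightarrow(y\rightarrow x)=y\rightarrow x$ by (H2) and Theorem~\ref{1.4}(vi), yielding $(x\rightarrow y)\rightarrow(y\rightarrow x)\le y\rightarrow x$. The reverse inequality $y\rightarrow x\le(x\rightarrow y)\rightarrow(y\rightarrow x)$ is the instance of Theorem~\ref{1.4}(v) with $y\rightarrow x$ in place of $x$ and $x\rightarrow y$ in place of $y$, so the two combine to give (v). Part (i) then follows by feeding (v) into the formula $a\vee b=((a\rightarrow b)\rightarrow b)\wedge((b\rightarrow a)\rightarrow a)$ of Proposition~\ref{1.6}: with $a=x\rightarrow y$ and $b=y\rightarrow x$, part (v) gives $a\rightarrow b=b$, hence $(a\rightarrow b)\rightarrow b=b\rightarrow b=1$ by (H2), while the symmetric instance of (v) gives $b\rightarrow a=a$ and $(b\rightarrow a)\rightarrow a=1$; therefore $a\vee b=1\wedge1=1$, which is exactly (i).

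The crux is part (iii), and I expect its reverse inequality to be the main obstacle. One direction is general and needs no hypothesis: from $x\le x\vee y$ and $y\le x\vee y$ --- both upper bounds following from Theorem~\ref{1.4}(v) and (viii) --- together with antitonicity of $\rightarrow$ in its first argument (Theorem~\ref{1.4}(xv)), one gets $(x\vee y)\rightarrow z\le(x\rightarrow z)\wedge(y\rightarrow z)$. The hard direction is $(x\rightarrow z)\wedge(y\rightarrow z)\le(x\vee y)\rightarrow z$; here the basic axiom (BH), equivalently the prelinearity of part (i), must be used in an essential way, since mere monotonicity does not suffice --- the two defining factors of $x\vee y$ respond in opposite directions to the hypotheses. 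I see two routes. The first is to verify, using prelinearity, one of the equivalent conditions of Proposition~\ref{1.6} (say associativity of $\vee$, or the monotonicity condition $x\le y\Rightarrow x\vee z\le y\vee z$), thereby showing $\vee$ is a genuine join so that $H$ is a $\vee$-hoop, after which (iii) is immediate from Proposition~\ref{1.8}(i). The second is to prove the reverse inequality of (iii) directly, reducing it via residuation (Theorem~\ref{1.4}(ii)) to $\big((x\rightarrow z)\wedge(y\rightarrow z)\big)\odot(x\vee y)\le z$ and discharging this with (v) and prelinearity; $\vee$ is then seen to be a join because a common upper bound $w$ of $x$ and $y$ satisfies $(x\rightarrow w)\wedge(y\rightarrow w)=1$, i.e.\ $(x\vee y)\rightarrow w=1$, i.e.\ $x\vee y\le w$.

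With (iii) established the remaining items are short. Setting $z:=y$ in (iii) and using $y\rightarrow y=1$ from (H2) gives $(x\vee y)\rightarrow y=(x\rightarrow y)\wedge1=x\rightarrow y$, which is (ii). For (iv), the inequality $x\odot(y\wedge z)\le(x\odot y)\wedge(x\odot z)$ is immediate from monotonicity of $\odot$ (Theorem~\ref{1.4}(xiii)) applied to $y\wedge z\le y$ and $y\wedge z\le z$. The reverse inequality $(x\odot y)\wedge(x\odot z)\le x\odot(y\wedge z)$ is the second genuinely nontrivial point; I would obtain it from divisibility, expressing $\wedge$ through $\odot$ and $\rightarrow$ as in Theorem~\ref{1.4}(i), together with the join structure now available --- the underlying reason being that once $\vee$ is a join a basic hoop behaves as a subdirect product of chains, on each of which $y\wedge z$ is simply the smaller of $y$ and $z$ and the identity is transparent.
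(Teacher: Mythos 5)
Your handling of (v), (i), and (ii) is correct and complete: instantiating (BH) at $z:=y\rightarrow x$ and combining with Theorem~\ref{1.4}(v) gives (v); feeding (v) into the defining formula of Proposition~\ref{1.6} gives (i); and (ii) follows from (iii) with $z:=y$. The easy inequality $(x\vee y)\rightarrow z\le(x\rightarrow z)\wedge(y\rightarrow z)$ is also fine. Note that the paper itself offers no proof of this lemma---it is quoted from \cite{GeLePr}---so the benchmark is the literature argument, which, like your plan, derives (v)/prelinearity from (BH) first and then establishes the lattice structure before (ii)--(iv).

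The genuine gap is exactly where you predicted difficulty but did not do the work: the reverse inequality in (iii), equivalently the claim that the defined term $\vee$ is a least upper bound, and likewise the reverse inequality in (iv). Neither of your two routes is carried out, and neither is routine. The natural use of (BH)---residuating $(x\rightarrow y)\rightarrow w\le((y\rightarrow x)\rightarrow w)\rightarrow w$ into $((x\rightarrow y)\rightarrow w)\odot((y\rightarrow x)\rightarrow w)\le w$ and then bounding $x\vee y$ above by each factor when $x,y\le w$---yields only $(x\vee y)\odot(x\vee y)\le w$, and in a hoop there is no passage from $m^{2}\le w$ to $m\le w$ (in the {\L}ukasiewicz chain $0.6^{2}\le 0.3$ while $0.6\not\le 0.3$). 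Your Route~2 hits the same wall: after residuation one bounds $\bigl[(x\rightarrow z)\wedge(y\rightarrow z)\bigr]\odot(x\vee y)$ factorwise and again only its square lands below $z$. Closing this ``square gap'' is precisely the nontrivial content of the cited result. The sketch for (iv) inherits the problem: divisibility plus ``the join structure now available'' does not suffice; the standard argument writes $(x\odot y)\wedge(x\odot z)$ as $\bigl[(x\odot y)\wedge(x\odot z)\bigr]\odot\bigl[(y\rightarrow z)\vee(z\rightarrow y)\bigr]$ and uses (i) together with distributivity of $\odot$ over existing joins (Proposition~\ref{1.8}(ii)), a trick your outline never mentions, while your fallback justification---that a basic hoop is a subdirect product of chains---invokes the representation theorem of \cite{AgFeMo}, which is deeper than the lemma itself, is not available in this paper, and cannot be assumed in a self-contained proof.
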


\begin{prop}\label{2.66}
Let $H$ be a basic hoop algebra. If $H=Id(H)$, Then $x \odot (y \odot z) \le (x \wedge y) \odot (x \wedge z)$ for any $x,y,z\in H$. 
\end{prop}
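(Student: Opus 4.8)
The plan is to reduce the statement to a single observation: when $H$ is idempotent, the monoid multiplication collapses onto the meet, i.e. $a \odot b = a \wedge b$ for all $a,b \in H$. Once this identity is in hand the desired inequality becomes immediate (in fact an equality), and the basic-hoop hypothesis turns out not to be needed at all.

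First I would establish $a \odot b = a \wedge b$. The inequality $a \odot b \le a \wedge b$ is immediate from Theorem~\ref{1.4}(iii), which gives $a \odot b \le a$ and $a \odot b \le b$; thus $a \odot b$ is a lower bound of $\{a,b\}$, and since $(H;\le)$ is a $\wedge$-semilattice (Theorem~\ref{1.4}(i)) we get $a \odot b \le a \wedge b$. For the reverse inequality I would invoke idempotency: because $H = \id(H)$, the element $a \wedge b$ satisfies $(a \wedge b) \odot (a \wedge b) = a \wedge b$. Now $a \wedge b \le a$ and $a \wedge b \le b$, so applying the monotonicity of $\odot$ (Theorem~\ref{1.4}(xiii), together with commutativity) twice yields $(a \wedge b) \odot (a \wedge b) \le a \odot (a \wedge b) \le a \odot b$. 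Combining the two bounds gives $a \wedge b \le a \odot b$, hence $a \odot b = a \wedge b$.

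With this identity I would finish by rewriting both sides. The left-hand side becomes $x \odot (y \odot z) = x \wedge (y \wedge z) = x \wedge y \wedge z$, using associativity of meet in the semilattice of Theorem~\ref{1.4}(i). For the right-hand side, applying the identity to $a = x \wedge y$ and $b = x \wedge z$ gives $(x \wedge y) \odot (x \wedge z) = (x \wedge y) \wedge (x \wedge z) = x \wedge y \wedge z$ by idempotency, commutativity and associativity of $\wedge$. Hence both sides equal $x \wedge y \wedge z$, and the claimed inequality holds (indeed as an equality).

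The only delicate point is the reverse inequality $a \wedge b \le a \odot b$, which is precisely where the global idempotency hypothesis is used; without it $a \odot b$ is in general strictly below $a \wedge b$, so this is the single step carrying the hypothesis. Everything else is routine bookkeeping with the semilattice structure, so I anticipate no genuine obstacle. It is worth remarking that the argument never invokes the axiom (BH) nor Lemma~\ref{2.19}: the proposition is really a statement about idempotent hoops, and the stated $\le$ can be upgraded to equality.
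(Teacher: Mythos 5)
Your proof is correct, but it takes a genuinely different route from the paper's. The paper stays inside the basic-hoop toolkit: it starts from $x \wedge (y \odot z) \le x \wedge y$ and $x \wedge (y \odot z) \le x \wedge z$, multiplies these to get $\bigl(x \wedge (y \odot z)\bigr) \odot \bigl(x \wedge (y \odot z)\bigr) \le (x \wedge y) \odot (x \wedge z)$, and then uses the distributivity law of Lemma~\ref{2.19}(iv) --- which is exactly where the basic hypothesis enters --- together with idempotency of $x$ and of $y \odot z$ to show that the left-hand square collapses to $x \odot (y \odot z)$. You instead prove the global identity $a \odot b = a \wedge b$ valid in any idempotent hoop (the $\le$ direction from Theorem~\ref{1.4}(iii), the $\ge$ direction from idempotency of $a \wedge b$ and monotonicity of $\odot$), after which both sides of the claimed inequality equal $x \wedge y \wedge z$. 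Your route buys more: it shows the inequality is actually an equality and that the hypothesis that $H$ is basic is superfluous --- the proposition is a fact about idempotent hoops alone. The paper's route avoids having to establish the collapse $\odot = \wedge$ up front, but it is strictly less general. Notably, the paper itself records your key identity later, in Remark~\ref{4.40}, in the context of finite or $(ACC)$ hoops; the argument given there works verbatim under the hypothesis $H = \id(H)$, so your observation is fully consistent with the paper's own machinery, just deployed earlier.
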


\begin{proof}
According to   Theorem~\ref{1.4}(iii), we have	$x \wedge (y \odot z) \le x \wedge y$ and $x \wedge (y \odot z) \le x \wedge z$.
By multiplying two inequality we have 	$(x \wedge (y \odot z)) \odot (x \wedge (y \odot z)) \le (x \wedge y) \odot (x \wedge z)$.
	On the other hand, by  Lemma~\ref{2.19}(iv), we have
\begin{align*}
	(x \wedge (y \odot z)) \odot (x \wedge (y \odot z)) & = [x \odot (x \wedge (y \odot z)) ] \wedge [(y \odot z) \odot (x \wedge (y \odot z) ]
	\\ & = (x\odot x) \wedge (x \odot (y \odot z)) \wedge ((y \odot z) \odot x) \wedge ((y \odot z) \odot (y \odot z))  \\ & = x \wedge (x \odot (y \odot z)) \wedge (y \odot z)  \\ & = x \odot (y \odot z)
\end{align*}
Therefore, the assertion is concluded.
\end{proof}

\begin{prop}\label{2.20}{\rm \cite[Proposition 4.9]{GeLePr}}
Any Wajsberg hoop is a basic hoop. 
\end{prop}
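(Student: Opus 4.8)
The plan is to prove the basic-hoop inequality (BH) by first reducing it, via residuation, to a product inequality, and then to derive that inequality from the prelinearity identity $(x\rightarrow y)\vee(y\rightarrow x)=1$. Writing $a=x\rightarrow y$ and $b=y\rightarrow x$, the goal $(x\rightarrow y)\rightarrow z\le((y\rightarrow x)\rightarrow z)\rightarrow z$ reads $a\rightarrow z\le(b\rightarrow z)\rightarrow z$, which by Theorem~\ref{1.4}(ii) is equivalent to $(a\rightarrow z)\odot(b\rightarrow z)\le z$. Thus the whole statement will follow once I know that $a\vee b=1$: indeed, by Proposition~\ref{1.8}(i) and Theorem~\ref{1.4}(vi) one has $(a\rightarrow z)\wedge(b\rightarrow z)=(a\vee b)\rightarrow z=1\rightarrow z=z$, while Theorem~\ref{1.4}(iii) gives $(a\rightarrow z)\odot(b\rightarrow z)\le(a\rightarrow z)\wedge(b\rightarrow z)$; combining these yields the required product inequality.

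The hard part is therefore establishing prelinearity $a\vee b=1$ in a Wajsberg hoop, and this is where the antipode law (T) must be used essentially. Recall from Definition~\ref{1.17} that $H$ is a lattice with $u\vee v=(u\rightarrow v)\rightarrow v=(v\rightarrow u)\rightarrow u$, the second equality being (T). Set $s=x\vee y$. First I would show $s\rightarrow x=b$: using the form $s=(y\rightarrow x)\rightarrow x$ and applying (T) to the pair $(y\rightarrow x,x)$ gives $s\rightarrow x=((y\rightarrow x)\rightarrow x)\rightarrow x=(x\rightarrow(y\rightarrow x))\rightarrow(y\rightarrow x)$, and since $x\rightarrow(y\rightarrow x)=(x\odot y)\rightarrow x=1$ by (H4) and Theorem~\ref{1.4}(iii), this collapses to $1\rightarrow(y\rightarrow x)=y\rightarrow x=b$. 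Next, using the other form $s=(x\rightarrow y)\rightarrow y$, I compute $a\odot s=(x\rightarrow y)\odot((x\rightarrow y)\rightarrow y)=(x\rightarrow y)\wedge y$ by Theorem~\ref{1.4}(i); but $y\le x\rightarrow y$ by Theorem~\ref{1.4}(v), so $(x\rightarrow y)\wedge y=y$ and hence $a\odot s=y$. Combining, $a\rightarrow b=a\rightarrow(s\rightarrow x)=(a\odot s)\rightarrow x=y\rightarrow x=b$ by (H4). Finally $a\vee b=(a\rightarrow b)\rightarrow b=b\rightarrow b=1$ by (H2), which is exactly prelinearity.

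The main obstacle I anticipate is precisely this prelinearity step: every manipulation before invoking (T) remains valid in an arbitrary hoop, yet prelinearity fails for general hoops, so the argument must be arranged so that (T) (equivalently, both representations of the join $x\vee y$) is genuinely exploited, here through the two identities $s\rightarrow x=y\rightarrow x$ and $a\odot s=y$. Once prelinearity is in hand, the remainder is the routine residuation-and-distributivity argument of the first paragraph, for which I only need that the single binary join $a\vee b$ exists; this is guaranteed since a Wajsberg hoop is a lattice, hence a $\vee$-hoop in the sense of Definition~\ref{1.55}, so Proposition~\ref{1.8}(i) may be applied to it.
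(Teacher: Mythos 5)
Your proof is correct, but note that the paper never actually proves Proposition~\ref{2.20}: the statement is imported from the literature as \cite[Proposition 4.9]{GeLePr}, so there is no internal argument to compare yours against. What you supply is a self-contained derivation from the paper's own preliminaries, and every step checks out. Writing $a=x\rightarrow y$, $b=y\rightarrow x$, the residuation step via Theorem~\ref{1.4}(ii) correctly reduces (BH) to $(a\rightarrow z)\odot(b\rightarrow z)\le z$; the appeal to Proposition~\ref{1.8}(i) is legitimate because a Wajsberg hoop is a lattice by Definition~\ref{1.17} (hence a $\vee$-hoop in the sense of Definition~\ref{1.55}), and, as you observe, only the existence of the binary join $a\vee b$ is needed for that identity; and the crucial prelinearity $a\vee b=1$ is correctly extracted from the antipode law (T): with $s=x\vee y$, your identities $s\rightarrow x=b$ (via (T) applied to the pair $(y\rightarrow x,\,x)$ together with $x\rightarrow(y\rightarrow x)=1$) and $a\odot s=y$ (via Theorem~\ref{1.4}(i) and (v)) combine through (H4) to give $a\rightarrow b=(a\odot s)\rightarrow x=b$, whence $a\vee b=(a\rightarrow b)\rightarrow b=1$. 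Incidentally, your argument isolates prelinearity of Wajsberg hoops as an intermediate fact, which this paper records only as a consequence for basic hoops (Lemma~\ref{2.19}(i)); so your route makes the paper self-contained and slightly more informative than the bare citation, at the cost of some length.
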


\begin{cor}\label{2.21}
Every bounded hoop algebra with $(DNP)$ is basic hoop.
\end{cor}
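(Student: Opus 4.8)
The plan is to prove this by chaining two results already established in the excerpt, since Corollary~\ref{1.18} and Proposition~\ref{2.20} together form a direct bridge from the hypothesis to the conclusion. First I would take an arbitrary bounded hoop algebra $H$ satisfying $(DNP)$. By Corollary~\ref{1.18}, the property $(DNP)$ for a bounded hoop algebra is precisely equivalent to $H$ being a Wajsberg hoop; this is the step that converts the negation-theoretic hypothesis into the structural identity (T) of antipode, namely $(x\rightarrow y)\rightarrow y = (y\rightarrow x)\rightarrow x$ for all $x,y\in H$.

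Next I would invoke Proposition~\ref{2.20}, which asserts that every Wajsberg hoop is a basic hoop, i.e. satisfies the inequality $(BH)$. Combining these two implications yields that $H$ is a basic hoop, which is exactly the desired conclusion. Thus the entire argument is the composition
\[
\text{bounded hoop with }(DNP) \;\Longrightarrow\; \text{Wajsberg hoop} \;\Longrightarrow\; \text{basic hoop},
\]
with the first arrow supplied by Corollary~\ref{1.18} and the second by Proposition~\ref{2.20}.

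I do not expect any genuine obstacle here, as the corollary is a purely formal consequence of previously proved facts and requires no new computation with the hoop operations. The only thing to be careful about is that Corollary~\ref{1.18} requires boundedness (it is stated for bounded hoop algebras), so I would make sure the hypothesis that $H$ is bounded is explicitly carried through, since the notion of $(DNP)$ itself is only defined in the bounded setting via the operation $x'=x\rightarrow 0$. Given this, the proof reduces to a one-line citation of the two results.

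\begin{proof}
Let $H$ be a bounded hoop algebra with $(DNP)$. By Corollary~\ref{1.18}, $H$ is a Wajsberg hoop. Hence, by Proposition~\ref{2.20}, $H$ is a basic hoop.
\end{proof}
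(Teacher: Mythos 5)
Your proof is correct and is exactly the paper's argument: the paper also derives the corollary by combining Corollary~\ref{1.18} (bounded with $(DNP)$ $\Leftrightarrow$ Wajsberg) with Proposition~\ref{2.20} (Wajsberg $\Rightarrow$ basic). Your added remark about carrying the boundedness hypothesis through is a sensible precaution but raises no issue beyond what the paper's one-line proof already assumes.
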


\begin{proof}
By using Corollary \ref{1.18} and Proposition \ref{2.20}, the result follows.
\end{proof}

\begin{rmk}\label{2.67} 
	It is substantial to mention that, in view of Proposition \ref{2.20} and Corollary
	\ref{2.21}, the propositions which  hold for basic hoop algebras also valid for Wajsberg hoops and bounded hoops with $(DNP)$.
\end{rmk}

\begin{defn}\label{2.30}{\rm \cite[Definition 1.1]{DPi}}
A \textit {residuated lattice} is an algebra $\mathbf{L} = (L;\wedge,\vee,\odot,\rightarrow,0,1) $ of type (2,2,2,2,0,0) equipped with an order ($\le$) satisfying the following:
	\begin{itemize}
		\item[(RL1)] $(L;\wedge,\vee,0,1)$ is a bounded lattice,
		
		\item[(RL2)] $(L;\odot,1)$ is a commutative ordered monoid,
		
		\item[(RL3)] $\odot$ and $\rightarrow$	form an adjoint pair, i.e.,	$x\odot y \le z$ if and only if $x\le y\rightarrow z$.
	\end{itemize}
\end{defn}

\begin{defn}\label{2.31}
%{\cite[Definition 3.3]{ABAK2}}{\cite[Remark 2.3.7]{Haj}}
	{\cite{Hol}}
Let $\mathbf{L} = (L;\wedge,\vee,\odot,\rightarrow,0,1)$ be a residuated lattice. We call $\mathbf{L}$ \textit{divisible} if $x \wedge y = x \odot (x \rightarrow y)$ for every $x,y\in L$.
\end{defn}

\begin{prop}\label{2.32}
%\cite[Proposition 3.4]{ABAK1}
{\rm\cite{BlFe2}}
Let $H$ be a divisible residuated lattice. Then $H$ is a bounded hoop which is a $\vee$-semilattice. 
\end{prop}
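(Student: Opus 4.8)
The plan is to verify directly that the hoop axioms (H1)--(H4) of Definition~\ref{1.1} hold for $(H;\odot,\rightarrow,1)$, using only the adjunction (RL3) for the residuation-type identities and invoking divisibility exactly where it is genuinely needed; boundedness and the $\vee$-semilattice structure are then read off from the underlying lattice of Definition~\ref{2.30}.

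First, (H1) is immediate, since by (RL2) the reduct $(H;\odot,1)$ is a commutative monoid. For (H2), I would combine the adjunction with the fact that $1$ is the top of the lattice: from $1\odot x = x\le x$ the adjunction (RL3) gives $1\le x\rightarrow x$, and since $x\rightarrow x\le 1$ always holds, we get $x\rightarrow x=1$. For (H4), I would argue that both $(x\odot y)\rightarrow z$ and $x\rightarrow(y\rightarrow z)$ satisfy the same order-theoretic description: for every $w\in H$, repeated application of (RL3) together with associativity and commutativity of $\odot$ shows that $w\le (x\odot y)\rightarrow z$ holds if and only if $w\odot x\odot y\le z$, if and only if $w\le x\rightarrow(y\rightarrow z)$. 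Since an element of a poset is determined by the set of elements below it (take $w$ to be each side in turn), the two expressions coincide.

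The \emph{heart} of the argument, and the only place divisibility is used, is (H3). Here I would observe that divisibility (Definition~\ref{2.31}) asserts precisely $x\odot(x\rightarrow y)=x\wedge y$, where $\wedge$ denotes the lattice meet of (RL1); applying the same identity to the pair $(y,x)$ gives $y\odot(y\rightarrow x)=y\wedge x$. Because the lattice meet is commutative, $x\wedge y=y\wedge x$, and hence $x\odot(x\rightarrow y)=y\odot(y\rightarrow x)$, which is exactly the Bosbach identity (H3). This completes the verification that $\mathbf{H}$ is a hoop.

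Finally, I would check that the intrinsic hoop order, $x\le y$ if and only if $x\rightarrow y=1$, agrees with the lattice order: by (RL3), $x\rightarrow y=1$ is equivalent to $1\le x\rightarrow y$, hence to $1\odot x\le y$, hence to $x\le y$. Consequently the least element $0$ of the bounded lattice (RL1) remains least for the hoop order, so $\mathbf{H}$ is bounded, and the existing binary lattice joins make $(H,\le)$ a $\vee$-semilattice; as a consistency check one notes the hoop-derived meet $x\odot(x\rightarrow y)$ equals the lattice meet by divisibility, so the two structures are compatible. I do not expect a serious obstacle, since (H1), (H2) and (H4) hold in any residuated lattice and the single substantive observation is that divisibility is nothing but (H3) in disguise, obtained through commutativity of the meet.
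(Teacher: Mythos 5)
Your proof is correct. Note, however, that the paper itself contains no proof of Proposition~\ref{2.32}: it is stated with a citation to \cite{BlFe2} and left unproved, so there is no internal argument to compare yours against. Your verification is the standard one and is complete: (H1), (H2) and (H4) hold in any integral commutative residuated lattice, and your uses of the adjunction (RL3) and of $1$ being the top element are exactly right (integrality is built into Definition~\ref{2.30}, since the lattice is bounded with the monoid unit $1$ as greatest element); the single substantive step is indeed that divisibility is the Bosbach identity (H3) read through commutativity of $\wedge$. Your final observation that the hoop order ($x\rightarrow y=1$) coincides with the lattice order, via $1\le x\rightarrow y \Leftrightarrow x\le y$, is the key point that lets you transfer both the least element $0$ and the existence of binary joins from (RL1) to the hoop, and it also confirms that the hoop meet $x\odot(x\rightarrow y)$ of Theorem~\ref{1.4}(i) agrees with the lattice meet; nothing is missing.
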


%%%%%%%%%%%%%%%%%%%%%%%%%%%%%%%%%%%%%%%%%%%%%%%%%
\section{Hoop  algebras and square roots}
%%%%%%%%%%%%%%%%%%%%%%%%%%%%%%%%%%%%%%%%%%%%%%%%%
In 1995, H\"{o}hle \cite{Hol} defined the square root as a unary operation on an $l$-groupoid, which is commutative, residuated, and integral, and investigated properties in this type of algebras. He then studied the influence of the square root on MV-algebras and ultimately proposed a new classification for these algebras based on the square root. In 1999, Ambrosio \cite{Amb} presented other properties for strict MV-algebras. In 2015, Chen and Dudek \cite{ChDu} studied the representation and properties of square roots in pseudo-MV-algebras. Finally, in 2023, A. Dvure\v censkij and Zahiri \cite{DvZa1} presented representations and other properties of the square root on pseudo-MV-algebras. This section introduces the definitions of the square root and the $n$-th root in hoop  algebras, and provides properties of these operations.
\begin{defn}\label{3.1}{\rm \cite{Hol}}
Let $H$ be a hoop  algebra. A mapping $s:H \rightarrow H$ is called a \textit{square root} on $H$ if it satisfies the following conditions:

(S1) For every $x\in H$, $s(x)\odot s(x)=x$,

(S2) For all $x,y\in H$, if $y\odot y \le x$, then $y\le s(x)$.
%\\ Sometimes, the square root $s(x)$ is denoted by $x^\frac{1}{2}$.
\end{defn}

\begin{exm}\label{3.2}
a) Let $I=\left [ 0,1 \right ] $ be the unit closed interval of real numbers, and let $a\in I$. Let $G_a = \{a^t : t\in \mathbb{Q^+} \cup \{0\}\}$ where $a^0=1.$
We define the multiplication of two elements as $a^t \odot a^s = a^{t+s}$ and the operation $\rightarrow$ is given by $a^t \rightarrow a^s = a^{\max(s-t,0)}.$ 
It can be easily verified that $(G_a;\odot,\rightarrow,1)$ is a hoop algebra. We define the unary operation $s:G_a \rightarrow G_a $ by $s(a^\frac{m}{n})= a^\frac{m}{2n},$ for every $a^\frac{m}{n}$. We check the conditions of Definition \ref{3.1}. We have:
\[ s(a^\frac{m}{n})\odot s(a^\frac{m}{n}) = a^\frac{m}{2n}\cdot a^\frac{m}{2n} = a^\frac{m}{n} \]
Thus, condition (S1) holds. Additionally,
\[ a^t \odot a^t\le a^\frac{m}{n} \Rightarrow a^{2t} \le a^\frac{m}{n} \Rightarrow a^t\le a^\frac{m}{2n} \Rightarrow a^t \le s(a^\frac{m}{n}). \]
Therefore, condition (S2) is also satisfied. Hence, the operation $s$ is a square root on $G_a$.\\
b) Let $I=\left [ 0,1 \right ]$ be the unit closed interval of real numbers. We define the operations $\odot$ and $\rightarrow$ on $I$ as follows. For any $x,y \in I$,
\[ x\odot y = \min{(x,y)} \quad \text{and} \quad x\rightarrow y =
\begin{cases} 
1 & \text{if } x \le y \\ 
y & \text{if } x > y 
\end{cases}.
\]
The structure $\textbf{Goa}:=(I;\odot,\rightarrow,1)$ defined with these operations  is a hoop  algebra. This algebraic structure is known as the G\"{o}del algebra (see~ \cite{DPi,DBDPAJ}).

In $\textbf{Goa}$, we define the unary operation $s: I \rightarrow I$ as $s(x)= x$ for every $x\in I$. It is evident that the conditions of Definition \ref{3.1} hold. Therefore,  $s$ is a square root on $\textbf{Goa}$.\\
c) Let $I=\left [ 0,1 \right ]$ be the unit closed interval of real numbers. We define the operations $\odot$ and $\rightarrow$ on $I$ as follows. For any $x,y \in I$,
\[ x\odot y =x \cdot y \quad \text{and} \quad x \rightarrow y =
\begin{cases} 
1 & \text{if } x \le y \\ 
\frac{y}{x} & \text{if } x > y 
\end{cases}.
\]
The structure $\textbf{Pra} := (I;\odot,\rightarrow,1)$ defined with these operations  is a hoop  algebra (see~\cite{DPi,DBDPAJ}). This algebraic structure is known as the product algebra.\\
In the algebra $\textbf{Pra}$, we define the unary operation $s: I \rightarrow I $ as
\[s(x)= \sqrt{x},\;\; \forall x\in I\]
In this case, we have $s(x)\odot s(x)=\sqrt{x} \cdot \sqrt{x} = x$. Thus, the condition (S1) of Definition \ref{3.1} is satisfied. Also,
\[y\odot y \le x \Rightarrow y^2\le x \Rightarrow y\le \sqrt{x} \Rightarrow y \le s(x).\]
Therefore, condition (S2) is also met. Hence, the operation $s$ is a square root on $\textbf{Pra}$.\\
d) Let $(B;\vee,\wedge,',0,1)$ be a Boolean algebra. We define the following operations on $B$:
\[ x\odot y =x\wedge y,\quad x\rightarrow y = x'\vee y\]
for all $x,y \in B$. Then, $(B;\odot,\rightarrow,0,1)$ is a bounded hoop  algebra. In $B$, we define the unary operation $s:B \rightarrow B$ as $s(x)= x$ for all $x\in B$.
In this case, we have $s(x)\odot s(x)= x\wedge x = x$. Also, $ y\odot y \le x \Rightarrow y \wedge y \le x \Rightarrow y \le x \Rightarrow y \le s(x).$ 
Therefore,  $s$ is a square root on $B$.
\end{exm}

\begin{thm}\label{3.3}
Let $H$ be a hoop  algebra and $s:H \rightarrow H$ be a square root on $H$. Then for every $x\in H$
\begin{itemize}
	\item[{\rm (i)}] $x\leq s(x)$. In particular, if  $H$ is bounded and $s(x)=0$, then $x=0$.

\item[{\rm (ii)}] $s(1)=1$.

\item[{\rm (iii)}] $x=s(x)\odot (s(x)\rightarrow x)$.
\end{itemize}
\end{thm}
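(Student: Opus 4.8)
The plan is to derive all three parts directly from the two defining conditions (S1) and (S2) of Definition~\ref{3.1}, together with the elementary order-theoretic facts collected in Theorem~\ref{1.4}; no part requires an induction or a clever auxiliary construction. The single idea that drives everything is to read (S1) as exhibiting $x$ as a \emph{self-product} $x = s(x)\odot s(x)$ and then feed this into the absorption-type inequality $a\odot b \le a,b$.

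For part (i), I would invoke Theorem~\ref{1.4}(iii) with both factors equal to $s(x)$ to get $s(x)\odot s(x)\le s(x)$, and then substitute (S1) on the left-hand side to conclude $x\le s(x)$ immediately. The ``in particular'' clause is then routine: if $H$ is bounded and $s(x)=0$, the inequality $x\le s(x)=0$ forces $x=0$ because $0$ is the least element. (Notice that condition (S2) is not even needed here.)

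For part (ii), I would simply specialize part (i) to $x=1$, obtaining $1\le s(1)$. Since $1$ is the greatest element of $H$ — indeed $s(1)\to 1 = 1$ by Theorem~\ref{1.4}(vii), i.e. $s(1)\le 1$ — antisymmetry of $\le$ yields $s(1)=1$. (Equivalently one can argue directly from $s(1)\odot s(1)=1\le s(1)$.) For part (iii), the point is to recognize that the expression $s(x)\odot(s(x)\rightarrow x)$ is, by the meet formula in Theorem~\ref{1.4}(i), exactly $s(x)\wedge x$. Since part (i) gives $x\le s(x)$, this meet collapses to its smaller argument $x$, so $s(x)\odot(s(x)\rightarrow x)=s(x)\wedge x = x$.

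None of the three steps presents a genuine obstacle; the only place requiring a moment's care is part (iii), where one must spot that the defining identity $x\wedge y = x\odot(x\rightarrow y)$ from Theorem~\ref{1.4}(i) applies verbatim with $x$ and $y$ replaced by $s(x)$ and $x$, after which part (i) finishes the argument. Thus the logical order is (i) first (using only (S1) and Theorem~\ref{1.4}(iii)), then (ii) and (iii) as short consequences of (i).
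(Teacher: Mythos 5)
Your proof is correct and takes essentially the same approach as the paper: parts (i) and (ii) are identical, and in part (iii) your appeal to the meet formula $s(x)\wedge x = s(x)\odot(s(x)\rightarrow x)$ from Theorem~\ref{1.4}(i) together with $x\le s(x)$ is just a rephrasing of the paper's direct use of axiom (H3) with $x\rightarrow s(x)=1$. There are no gaps.
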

\begin{proof}
(i) By Theorem \ref{1.4}, for every $x\in H$, we have $x=s(x) \odot s(x) \le s(x)$.\\
(ii) By Theorem \ref{1.4}(vii), and item (i), it is clear.\\
(iii) By  (i), for every $x\in H$, we have $x \rightarrow s(x)=1$. Then, by  condition $H3$ of Definition~\ref{1.1}, we get $x=s(x) \odot (s(x) \rightarrow x)$.
\end{proof}

\begin{prop}\label{3.30}
Let $H$ be a hoop  algebra and $s:H \rightarrow H$ be a square root on $H$. Then, for every $x\in H$, $x^2=x$ if and only if $s(x)=x$.
\end{prop}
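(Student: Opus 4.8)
The reverse implication is immediate, so I would dispose of it first: if $s(x) = x$, then (S1) gives $x \odot x = s(x) \odot s(x) = x$, that is, $x^2 = x$. All the content lies in the forward direction. Assuming $x^2 = x$, my plan is to prove the two inequalities $x \le s(x)$ and $s(x) \le x$ separately. The first costs nothing, since Theorem~\ref{3.3}(i) already gives $x \le s(x)$ for \emph{every} element, with no appeal to idempotency.

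For the reverse inequality I would first squeeze out the algebraic identity that idempotency forces on $s(x)$. Write $a = s(x)$, so that $a^2 = x$ by (S1). From $x \le a$ (Theorem~\ref{3.3}(i)) and Theorem~\ref{1.4}(xiii) I get $x = x \odot x \le a \odot x$, while $a \le 1$ together with Theorem~\ref{1.4}(xiii) gives $a \odot x \le 1 \odot x = x$; hence $a^3 = a \odot a^2 = a \odot x = x = a^2$. Thus idempotency of $x$ forces the relation $s(x)^3 = s(x)^2$, equivalently $s(x) \odot x = x$.

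The crux is to pass from $s(x)^3 = s(x)^2$ together with $x \le s(x)$ to the desired $s(x) = x$, i.e., to upgrade the cancellation-free equality $a \odot a = a \odot a^2$ to $a = a^2$. In a cancellative hoop this is immediate from Proposition~\ref{2.17}(ii), and I expect this cancellation step to be precisely the main obstacle in the general setting: a bare hoop need not be cancellative, so $a \odot a = a \odot a^2$ does not by itself deliver $a = a^2$. My plan here is to replace cancellation by the extremal clause (S2): the idea is to exhibit an element dominating $s(x)$ whose square still lies below $x$, so that (S2) forces it below $s(x)$ and the two estimates collapse onto $x$. Concretely I would analyse $r = s(x) \rightarrow x$, for which Theorem~\ref{3.3}(iii) gives $x = s(x) \odot r$ and Theorem~\ref{1.4}(x) gives $s(x) \le r$, and then try to close the relevant squares so as to bound $s(x)$ from above by $x$. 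Securing this upper bound is the delicate point, and it is exactly where any additional regularity of $H$ (basicness, a genuine join, or a chain/local condition) would have to be brought in.
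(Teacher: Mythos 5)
Your reverse implication is fine, but the forward direction contains a genuine gap, and it is one that cannot be closed: read pointwise, as you read it, the implication ``$x^2=x \Rightarrow s(x)=x$'' is simply false in a general hoop with square root. The paper's own Example~\ref{4.43}(a) is a counterexample: in the {\L}ukasiewicz hoop on $[0,1]$ one has $0\odot 0=\max(0,-1)=0$, so $0$ is idempotent, yet $s(0)=\tfrac{1}{2}\neq 0$. Your intermediate work is consistent with this: you correctly derive $s(x)\odot x=x$, i.e.\ $s(x)^3=s(x)^2$, from idempotency of $x$ (in the example, $a=\tfrac{1}{2}$ gives $a^2=a^3=0$), and you correctly flag that upgrading $s(x)^3=s(x)^2$ to $s(x)=x$ needs cancellation or extra regularity --- but no manipulation of (S2), of the element $s(x)\rightarrow x$, or of basicness can supply it, because the conclusion itself fails.

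What is missing is the quantifier structure that the paper intends and uses. Proposition~\ref{3.30} is to be read globally: ($x^2=x$ for \emph{every} $x\in H$) if and only if ($s(x)=x$ for \emph{every} $x\in H$). Under the global hypothesis the forward direction is a one-liner, and it is exactly the paper's proof: since every element of $H$ is idempotent, in particular $s(x)$ is idempotent, so $x=s(x)\odot s(x)=s(x)$. The entire content of the argument is this application of the hypothesis to the element $s(x)$ rather than to $x$; your proposal never invokes the hypothesis at any element other than $x$ itself, which is why it stalls. Consistently with this reading, the paper later uses the proposition only in its global form (e.g.\ in Theorem~\ref{4.3} and Proposition~\ref{4.14}), while the pointwise direction it does use elsewhere is only the unproblematic one, $s(x)=x \Rightarrow x^2=x$.
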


\begin{proof}
Assume $x^2=x$ for every $x\in H$. In this case, $x=s(x) \odot s(x) = s(x)$. Conversely, assume $s(x)=x$ for every $x\in H$. Therefore,
$x=s(x) \odot s(x)= x \odot x=x^2$.

\end{proof}

\begin{thm}\label{3.31}
Let $H$ be a hoop  algebra and $s:H \rightarrow H$ be a square root on $H$. Then the square root is unique.
\end{thm}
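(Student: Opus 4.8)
The plan is to show that any two square roots on $H$ must coincide pointwise, exploiting the fact that conditions (S1) and (S2) together pin down $s(x)$ as the \emph{greatest} element of $H$ whose square lies below $x$. So I would suppose that $s_1$ and $s_2$ are both square roots on $H$, fix an arbitrary $x\in H$, and aim to prove $s_1(x)=s_2(x)$.

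First I would use condition (S1) for $s_1$ to record $s_1(x)\odot s_1(x)=x$, so in particular $s_1(x)\odot s_1(x)\le x$ by reflexivity of $\le$. Applying condition (S2) for the \emph{other} square root $s_2$, taking $y:=s_1(x)$, then yields $s_1(x)\le s_2(x)$. Running the symmetric argument, condition (S1) for $s_2$ gives $s_2(x)\odot s_2(x)=x\le x$, and condition (S2) for $s_1$ with $y:=s_2(x)$ delivers $s_2(x)\le s_1(x)$. Since $\le$ is a partial order on $H$, antisymmetry applied to the pair of inequalities $s_1(x)\le s_2(x)$ and $s_2(x)\le s_1(x)$ forces $s_1(x)=s_2(x)$. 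As $x$ was arbitrary, $s_1=s_2$.

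I do not anticipate a genuine obstacle: the whole content is the observation that (S1) together with (S2) characterizes $s(x)$ as the maximum of $\{\,y\in H: y\odot y\le x\,\}$, and a maximum, when it exists, is unique by antisymmetry. The one point deserving minimal care is to invoke (S2) for the square root \emph{opposite} to the one producing the element being bounded, since this is precisely what makes the two resulting inequalities point in opposite directions and thereby collapse to an equality.
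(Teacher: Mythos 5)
Your proof is correct and follows exactly the paper's argument: use (S1) for one square root to get $s_1(x)\odot s_1(x)\le x$, apply (S2) for the other to obtain $s_1(x)\le s_2(x)$, argue symmetrically, and conclude by antisymmetry of $\le$. The paper's proof is the same crossed application of (S1) and (S2), so there is nothing to add.
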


\begin{proof}
Assume $s:H \rightarrow H$ and $r:H \rightarrow H$ are two square roots on $H$. According to Definition \ref{3.1}, for every $x\in H$, we have $s(x)\odot s(x)\le x \Rightarrow s(x)\le r(x)$. Similarly, $r(x)\le s(x)$. Therefore, $r(x)= s(x)$.
\end{proof}

\begin{thm}\label{3.80}
	Let $H$ be a hoop  algebra and $s:H \rightarrow H$ be a square root on $H$. Then the square root is one to one.
\end{thm}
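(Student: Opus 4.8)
The plan is to read off injectivity directly from condition (S1), which already exhibits each $x$ as a function of its square root. Concretely, I would start by assuming $s(x)=s(y)$ for arbitrary $x,y\in H$ and aim to conclude $x=y$. Since $\odot$ is a well-defined binary operation, applying it to two copies of the common value $s(x)=s(y)$ gives the single identity
\[
s(x)\odot s(x)=s(y)\odot s(y).
\]
Now invoking (S1) on each side rewrites the left-hand term as $x$ and the right-hand term as $y$, so that $x=y$. This is the whole argument.

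The conceptual point underlying the step above is that (S1) says $x=s(x)\odot s(x)$ for every $x$, i.e.\ the ``squaring'' map $t\mapsto t\odot t$ is a left inverse of $s$. Any map admitting a left inverse is automatically injective, and Theorem~\ref{3.80} is simply the instance of this general fact for the square root. I expect no genuine obstacle here: the result is an immediate corollary of (S1) and does not require (S2), Theorem~\ref{3.3}, or uniqueness (Theorem~\ref{3.31}). One may optionally note the stronger structural remark that $s$ is in fact an order embedding onto its image, but for the stated claim the one-line computation via (S1) suffices.
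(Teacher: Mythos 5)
Your proposal is correct and is essentially identical to the paper's own proof: assume $s(x)=s(y)$, square both sides, and apply (S1) to conclude $x=y$. The additional remark that squaring is a left inverse of $s$ is a nice conceptual gloss but the core argument coincides with the paper's.
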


\begin{proof}
Let  $x,y\in H$ be such that $s(x)=s(y)$. Then, we have $s(x)\odot s(x)=s(y)\odot s(y)$ and so $x=y.$
\end{proof}

\begin{cor}\label{3.5}
Let $H$ be a hoop algebra and $s:H \rightarrow H$ be a square root on $H$. Then, for every $x\in H$, $x=1$ if and only if $s(x)=1$.
\end{cor}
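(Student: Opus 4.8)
The statement is a biconditional, so the plan is to handle each direction separately, and in both cases the work reduces to invoking results already established earlier in this section.

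For the forward direction, suppose $x=1$. Then I would simply apply Theorem~\ref{3.3}(ii), which asserts $s(1)=1$, to conclude $s(x)=s(1)=1$. No computation is needed beyond substituting $x=1$.

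For the reverse direction, suppose $s(x)=1$. Here there are two equally short routes. The most direct is to use condition (S1) from Definition~\ref{3.1}, namely $s(x)\odot s(x)=x$; substituting $s(x)=1$ and using that $1$ is the monoid identity from (H1) gives $x=1\odot 1=1$. Alternatively, one can argue by injectivity: since $s(1)=1$ by Theorem~\ref{3.3}(ii), the hypothesis $s(x)=1=s(1)$ together with the injectivity of $s$ established in Theorem~\ref{3.80} forces $x=1$.

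Combining the two implications yields the desired equivalence. I do not anticipate any genuine obstacle here: both directions follow immediately from the defining property (S1) and the elementary facts $s(1)=1$ and the injectivity of $s$, all of which are already in hand. The only choice to make is stylistic, namely whether to present the reverse direction via the monoid identity or via Theorem~\ref{3.80}; I would favor the (S1) computation since it is self-contained and avoids appealing to injectivity.
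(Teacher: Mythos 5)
Your proposal is correct, and it actually contains the paper's argument as one of your two routes. The paper proves the forward direction exactly as you do, via Theorem~\ref{3.3}(ii), and handles the converse by the injectivity argument you list as your alternative: from $s(x)=1=s(1)$ and Theorem~\ref{3.80} it concludes $x=1$. Your preferred route for the converse, computing $x=s(x)\odot s(x)=1\odot 1=1$ directly from (S1) and the monoid identity, is marginally more elementary, since it bypasses Theorem~\ref{3.80} entirely and rests only on the definition of a square root; the paper's version, by contrast, reuses machinery already established (uniqueness of the value $s(1)$ plus injectivity) at the cost of one extra citation. Both are one-line arguments and equally rigorous, so the difference is purely stylistic.
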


\begin{proof}
If $x=1$, then by Theorem \ref{3.3}, $s(x)=1$. Conversely, if $s(x)=1$, then by Theorem~\ref{3.3}(ii), $s(x)=s(1)$. Therefore, by Theorem~\ref{3.80}, $x=1$.
\end{proof}

\begin{prop}\label{3.29}
	Let $H$ and $G$ be two hoop  algebras. Suppose $s_1: H \to H$ and $s_2: G \rightarrow G$ are  square roots on $H$ and $G$, respectively. Then, the function 
	$s: H \times G \rightarrow H \times G$ defined by $s(x,y) = (s_1(x), s_2(y))$ is a square root on $H \times G$.
\end{prop}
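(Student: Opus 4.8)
The plan is to verify the two defining conditions (S1) and (S2) of Definition~\ref{3.1} directly for the map $s$, taking advantage of the fact—recorded in Proposition~\ref{1.15}—that the monoid product $\odot_{H\times G}$ and the residuum $\rightarrow_{H\times G}$ on $H\times G$ are both computed coordinatewise. Before starting I would note that the induced partial order on $H\times G$ is therefore also coordinatewise: by definition $(x_1,x_2)\le (y_1,y_2)$ iff $(x_1,x_2)\rightarrow_{H\times G}(y_1,y_2)=(1_H,1_G)$, and since this residuum equals $(x_1\rightarrow_H y_1,\, x_2\rightarrow_G y_2)$, the condition splits into $x_1\rightarrow_H y_1=1_H$ and $x_2\rightarrow_G y_2=1_G$, i.e.\ into $x_1\le y_1$ in $H$ and $x_2\le y_2$ in $G$. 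This observation is the only structural input needed, and it reduces everything to the coordinate hoops.

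For (S1), I would take an arbitrary $(x,y)\in H\times G$ and compute $s(x,y)\odot_{H\times G}s(x,y)=(s_1(x),s_2(y))\odot_{H\times G}(s_1(x),s_2(y))=(s_1(x)\odot_H s_1(x),\, s_2(y)\odot_G s_2(y))$, which equals $(x,y)$ upon applying (S1) for $s_1$ and for $s_2$ in each coordinate.

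For (S2), I would take $(a,b),(x,y)\in H\times G$ with $(a,b)\odot_{H\times G}(a,b)\le (x,y)$. By the coordinatewise product together with the coordinatewise order noted above, this single inequality is equivalent to the pair $a\odot_H a\le x$ in $H$ and $b\odot_G b\le y$ in $G$. Applying (S2) for $s_1$ gives $a\le s_1(x)$, and applying (S2) for $s_2$ gives $b\le s_2(y)$; recombining by the coordinatewise order yields $(a,b)\le (s_1(x),s_2(y))=s(x,y)$, which is precisely (S2) for $s$.

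I do not anticipate any genuine obstacle: the argument is a direct coordinatewise reduction, and the only point requiring a word of justification is that the order on the product is coordinatewise, which follows immediately from the coordinatewise formula for $\rightarrow_{H\times G}$ together with the characterization $u\le v\iff u\rightarrow v=1$.
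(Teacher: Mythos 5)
Your proof is correct and follows essentially the same coordinatewise verification of (S1) and (S2) as the paper's own proof; the only difference is that you explicitly justify that the product order on $H\times G$ is coordinatewise, a fact the paper uses implicitly. Nothing further is needed.
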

\begin{proof}
We examine the validity of  conditions of Definition \ref{3.1}. For every $x_1 \in H$ and $x_2 \in G$, by Proposition \ref{1.15}, we have
	\begin{align*}
		s(x_1,x_2) \odot s(x_1,x_2) &= (s_1(x_1), s_2(x_2)) \odot (s_1(x_1), s_2(x_2))\\
		&= (s_1(x_1) \odot s_1(x_1), s_2(x_2) \odot s_2(x_2))\\
		&= (x_1, x_2)
	\end{align*}
Thus, condition $S1$ holds. To prove condition $S2$, suppose $y_1 \in H$ and $y_2 \in G$ such that $(y_1, y_2) \odot (y_1, y_2) \le (x_1, x_2)$. Therefore, we have $y_1 \odot y_1 \le x_1$ and $y_2 \odot y_2 \le x_2$. Since $s_1$ and $s_2$ are the square roots on $H$ and $G$ respectively, we have $y_1 \le s_1(x_1)$ and $y_2 \le s_2(x_2)$. Then, $(y_1, y_2) \le (s_1(x_1), s_2(x_2))$. Hence, condition $S2$ is also satisfied.
\end{proof}

\begin{lem}\label{3.7}
Let $H$ be a hoop  algebra and $a\in \id(H)$. We define $H[a]=\{x\in H : a \le x\}$. Then, $H[a]$ is a subalgebra and a filter of $H$.
\end{lem}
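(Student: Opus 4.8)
The plan is to verify the three filter axioms (F1)--(F3) of Definition~\ref{2.1} directly, and then to observe that the very same facts---membership of $1$ together with closure under $\odot$ and $\rightarrow$---are exactly what is required for $H[a]$ to be a subalgebra, since the operations on a closed subset automatically restrict from those of $H$. The guiding observation is that $H[a]=\{x\in H:a\le x\}$ is by definition an up-set of $(H;\le)$, so several of the required properties come for free from transitivity of the order.

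First I would record the immediate facts. Since $a\rightarrow 1=1$ by Theorem~\ref{1.4}(vii), we have $a\le 1$, hence $1\in H[a]$; this simultaneously supplies (F1) and the distinguished constant needed for a subalgebra. For (F3), if $x\in H[a]$ and $x\le y$, then $a\le x\le y$ by transitivity, so $y\in H[a]$; thus $H[a]$ is upward closed.

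Next I would treat closure under $\odot$, which is (F2) and the heart of the matter. Given $x,y\in H[a]$, that is $a\le x$ and $a\le y$, I would apply monotonicity of $\odot$ (Theorem~\ref{1.4}(xiii)) in each argument to obtain $a\odot a\le x\odot y$. This is precisely where the hypothesis $a\in\id(H)$ is used: since $a\odot a=a$, the inequality reads $a\le x\odot y$, so $x\odot y\in H[a]$. I expect this to be the main (indeed only) obstacle, in the sense that idempotency is essential here; without it Theorem~\ref{1.4}(iii) gives only $a\odot a\le a$, so $a\odot a\le x\odot y$ would not yield $a\le x\odot y$.

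Finally, closure under $\rightarrow$ follows without any use of idempotency: for $x,y\in H[a]$, Theorem~\ref{1.4}(v) gives $y\le x\rightarrow y$, whence $a\le y\le x\rightarrow y$ and $x\rightarrow y\in H[a]$. Assembling these, (F1)--(F3) hold, so $H[a]$ is a filter; and since $H[a]$ contains $1$ and is closed under both $\odot$ and $\rightarrow$, these operations restrict to $H[a]$ and it is a subalgebra in the sense defined after Definition~\ref{1.1}. All steps other than the $\odot$-computation are routine order-theoretic consequences of Theorem~\ref{1.4}.
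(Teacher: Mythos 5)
Your proposal is correct and follows essentially the same route as the paper: both verify $1\in H[a]$, use idempotency of $a$ together with Theorem~\ref{1.4}(xiii) to get $a=a\odot a\le x\odot y$, and derive closure under $\rightarrow$ and the filter axioms from routine order properties. The only cosmetic difference is that for closure under $\rightarrow$ you pass through $a\le y\le x\rightarrow y$ via Theorem~\ref{1.4}(v), whereas the paper chains $a\le x\rightarrow a\le x\rightarrow y$ via items (v) and (xiv); both are equally valid.
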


\begin{proof}
%\marginpar{\fbox{needs improvment}}
%The proof is straightforward. 
%It is sufficient to examine the conditions of Definitions \ref{1.3}(8)  and \ref{2.1}. 
We let the operations $\odot$ and $\rightarrow$ on $H[a]$ to be the same as the operations $\odot$ and $\rightarrow$ in $H$. It is clear that $1\in H[a]$. Let $x,y\in H[a]$. Then, due to $a \le x$ and $a \le y$, and by Theorem \ref{1.4}(xiii), we have $a=a \odot a \le x \odot y$. Thus, $x \odot y \in H[a]$. Therefore, $H[a]$ is closed under multiplication. Also, according to items (v) and (xiv) of Theorem~\ref{1.4}, we have $a \le x \rightarrow a \le x \rightarrow y$.  Hence, $H[a]$ is a subalgebra of $H$.

Considering the conditions F1 and F2 of Definition \ref{2.1} from the above explanations, and with the fulfillment of condition F3, $H[a]$ is a filter of $H$.
\end{proof}

\begin{thm}\label{3.8}
Let $H$ be a hoop  algebra and $s: H \rightarrow H$ be a square root on $H$. Then, for every $x, y \in H$:
\begin{itemize}
\item[{\rm (i)}] If $x\leq y$, then $s(x)\le s(y)$.

\item[{\rm (ii)}] $x\le s(x^2)\le s(x)$ and $s(x^2)\odot s(x^2)=s(x)\odot s(x)\odot s(x)\odot s(x)=x^2$.
	
\item[{\rm (iii)}] $x \wedge y \le s(x)\odot s(y)$,
	
\item[{\rm (iv)}] $s(x \wedge y)\le s(x)\wedge s(y)$,
	
\item[{\rm (v)}] $s(x)\odot s(y) \le s(x\odot y)$,
		
\item[{\rm (vi)}] $s(x)\rightarrow s(y) = s(x\rightarrow y)$,
	
\item[{\rm (vii)}] $x^2\le s(x)\odot s(x^2)\le x$,		
	
\item[{\rm (viii)}] If $y\le s(x)$, then  $y^n\le x$, for every $n\in \mathbb{N}$ with $n \ge 2$.

\item[{\rm (ix)}] If $a\in \id(H)$, then $s_a: H[a] \rightarrow H[a]$ induced by $s$ is a square root on $H[a]$.

\item[{\rm (x)}]  $s(x)\le s(x^{n-1})\rightarrow s(x^{n})$, for every $n\in \mathbb{N}$.

\item[{\rm (xi)}] If $x\le y$, then $s(\left[x,y\right])\subseteq \left[s(x),s(y)\right]$.
\end{itemize}	

\end{thm}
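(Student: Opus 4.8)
The backbone of the whole argument is part (i), which is immediate: since $s(x)\odot s(x)=x\le y$ by (S1), condition (S2) of Definition~\ref{3.1} forces $s(x)\le s(y)$. Once this monotonicity of $s$ is in hand, most of the remaining items reduce to a one-line computation combined with either (S2) or the adjunction $a\odot b\le c \Leftrightarrow a\le b\rightarrow c$ of Theorem~\ref{1.4}(ii). So the plan is to establish (i) first and then feed it into the rest.

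For (ii) I would note $x^2\le x$ (Theorem~\ref{1.4}(iii)) and apply (i) to get $s(x^2)\le s(x)$, while $x\odot x=x^2$ gives $x\le s(x^2)$ via (S2); the displayed identities are just (S1) applied to $x^2$ together with $s(x)^4=(s(x)^2)^2=x^2$. Items (iv) and (xi) are pure monotonicity: from $x\wedge y\le x$ and $x\wedge y\le y$ (resp. $x\le z\le y$) and (i) one reads off $s(x\wedge y)\le s(x)\wedge s(y)$ (resp. $s(x)\le s(z)\le s(y)$, i.e. $s(z)\in[s(x),s(y)]$). Item (iii) then follows because, writing $x\wedge y=s(x\wedge y)\odot s(x\wedge y)$ and using $s(x\wedge y)\le s(x)$ and $s(x\wedge y)\le s(y)$ from (i), monotonicity of $\odot$ (Theorem~\ref{1.4}(xiii)) yields $x\wedge y\le s(x)\odot s(y)$. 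For (v) I would compute $(s(x)\odot s(y))\odot(s(x)\odot s(y))=(s(x)\odot s(x))\odot(s(y)\odot s(y))=x\odot y$ by commutativity and associativity and then invoke (S2); item (x) is immediate from this, since (v) gives $s(x)\odot s(x^{n-1})\le s(x\odot x^{n-1})=s(x^n)$ and adjunction rephrases it as $s(x)\le s(x^{n-1})\rightarrow s(x^n)$ (the case $n=1$ collapses to $s(x)\le s(x)$ using $s(1)=1$). Item (vii) combines $x\le s(x)$ (Theorem~\ref{3.3}(i)) and $x\le s(x^2)$ from (ii), multiplied, for the lower bound, and $s(x^2)\le s(x)$ from (ii) for the upper bound. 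Item (viii) is $y^2\le s(x)\odot s(x)=x$ followed by $y^n\le y^2$, and (ix) only requires checking that $s$ maps $H[a]$ into itself — which holds because $a\le x\le s(x)$, using Theorem~\ref{3.3}(i) and Lemma~\ref{3.7} — while (S1) and (S2) survive the restriction verbatim.

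The only item demanding genuine work is (vi), the identity $s(x)\rightarrow s(y)=s(x\rightarrow y)$, which I would prove by two opposite inequalities. For $s(x)\rightarrow s(y)\le s(x\rightarrow y)$, the plan is to apply (S2): it suffices to show $(s(x)\rightarrow s(y))\odot(s(x)\rightarrow s(y))\le x\rightarrow y$, equivalently, by adjunction and $x=s(x)\odot s(x)$, that $\big[(s(x)\rightarrow s(y))\odot s(x)\big]\odot\big[(s(x)\rightarrow s(y))\odot s(x)\big]\le y$; since $s(x)\odot(s(x)\rightarrow s(y))\le s(y)$ by Theorem~\ref{1.4}(iv), squaring this gives the bound $s(y)\odot s(y)=y$. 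For the reverse inequality $s(x\rightarrow y)\le s(x)\rightarrow s(y)$, I would use adjunction to reduce it to $s(x\rightarrow y)\odot s(x)\le s(y)$, then bound the left side using (v) by $s\big((x\rightarrow y)\odot x\big)=s(x\wedge y)$, and finish with (i) since $x\wedge y\le y$. The subtlety I expect to be the main obstacle is the associativity and commutativity bookkeeping that converts $(s(x)\rightarrow s(y))\odot(s(x)\rightarrow s(y))\odot x$ into the square $\big[(s(x)\rightarrow s(y))\odot s(x)\big]^{2}$, together with the recognition that $s(x)\odot(s(x)\rightarrow s(y))$ is precisely the meet $s(x)\wedge s(y)$, so that the bound by $y$ falls out cleanly.
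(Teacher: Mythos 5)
Your proposal is correct and follows essentially the same route as the paper's proof: part (i) straight from (S2), monotonicity plus the adjunction of Theorem~\ref{1.4}(ii) handling items (ii)--(v), (vii)--(xi), and the same two-inequality argument for (vi), with the reverse direction reduced via (v) to $s(x)\odot s(x\rightarrow y)\le s(x\wedge y)\le s(y)$. The only cosmetic difference is that in (vi) you square the inequality $s(x)\odot(s(x)\rightarrow s(y))\le s(y)$ directly, whereas the paper passes through the meet identity $s(x)\odot(s(x)\rightarrow s(y))=s(x)\wedge s(y)$ before bounding by $y$; the substance is identical.
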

\begin{proof}	
(i) We have $s(x)\odot s(x)=x\le y$, so by the condition (S2) of Definition~\ref{3.1}, $s(x)\le s(y)$.\\
(ii) We have $x\odot x\le x\odot x$, therefore by the condition $(S2)$ of Definition~\ref{3.1},   $x\le s(x\odot x)=s(x^2)$ and by (i), $s(x^2)\le s(x)$.
Moreover, by (S1), $s(x\odot x)\odot s(x\odot x) = x\odot x = s(x)\odot s(x)\odot s(x)\odot s(x)$.\\
(iii) Using part (i) and the compatibility of the partial order with multiplication, we have $x\wedge y = s(x\wedge y)\odot s(x\wedge y) \le s(x)\odot s(y)$.\\
(iv) The proof is straightforward.\\
(v) We have $(s(x)\odot s(y)) \odot (s(x)\odot s(y)) = (s(x)\odot s(x))\odot (s(y)\odot s(y)) = x \odot y$. Then, by the condition (S2) of Definition \ref{3.1}, $s(x)\odot s(y)\le s(x \odot y)$.\\
(vi) We have
\begin{align*}
x \odot (s(x)\rightarrow s(y))\odot (s(x)\rightarrow s(y))&= s(x)\odot s(x) \odot (s(x)\rightarrow s(y))\odot (s(x)\rightarrow s(y))\\
&= s(x) \odot (s(x)\wedge s(y))\odot (s(x)\rightarrow s(y))\\
&= (s(x)\wedge s(y))\odot s(x) \odot (s(x)\rightarrow s(y))\\ 
& \le (s(x)\wedge s(y)) \odot s(y) \\
& \le s(y) \odot s(y)\\
&= y 
\end{align*}
Therefore, by Theorem~\ref{1.4}(ii), $(s(x)\rightarrow s(y))\odot (s(x)\rightarrow s(y))\le x\rightarrow y$. From Definition $\ref{3.1}$, we get $s(x)\rightarrow s(y) \le s(x\rightarrow y)$. On the other hand, by the part (v), we have
\[
s(x)\odot s(x\rightarrow y)\le s(x\odot(x\rightarrow y))=s(x\wedge y)\le s(y)
\]
Therefore, according to Theorem \ref{1.4}(ii), $s(x\rightarrow y)\le s(x)\rightarrow s(y)$. Hence, $s(x\rightarrow y)= s(x)\rightarrow s(y)$.\\
(vii) According to Theorem \ref{3.3}(i), $x\le s(x)$. Also, by part (ii), $x\le s(x^2)$. Therefore, $x^2\le s(x)\odot s(x^2)$. Furthermore, we have $s(x^2)\le s(x)$. Therefore,
\[
s(x)\odot s(x^2)\le s(x)\odot s(x)=x  
\]
(viii) Suppose $y\le s(x)$. We have $y \odot y \le s(x)\odot s(x)=x $. Therefore, the assertion follows by induction.\\ 
%$\mathbb{N}$ and $2 \le n$, $y^n \le x$.
(ix) According to Lemma $\ref{3.7}$, the proof is easily done.\\
(x) We have $x^n=x^{n-1}\odot x$. By part (v), $s(x^n)\ge s(x^{n-1}) \odot s(x)$. Therefore, using Theorem $\ref{1.4}$(ii), $s(x)\le s(x^{n-1})\rightarrow s(x^{n})$.\\
(xi) The proof is straightforward according to part (i).
\end{proof}

\begin{cor}\label{3.9}
Let $H$ be a hoop  algebra,  $x\in H$ and $n \in \mathbb{N}$. Then 
\begin{itemize}
\item[{\rm (i)}] $s(x^{n})^2 = s(x)^{2n}= x^n$,
\item[{\rm (ii)}] $s(x)^{n}\le s(x^n)$.
\end{itemize}
\end{cor}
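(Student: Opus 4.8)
The plan is to derive both parts directly from the two defining conditions (S1) and (S2) of a square root, together with the exponent arithmetic available in the commutative monoid $(H;\odot,1)$. Since both claimed identities in part (i) collapse to the single value $x^n$, I would establish each equality on its own and then chain them; part (ii) will then fall out of part (i) almost for free.

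For part (i), first I would compute $s(x^n)^2 = s(x^n)\odot s(x^n) = x^n$ by a single application of (S1) to the element $x^n$. Next, to handle $s(x)^{2n}$, I would use associativity and commutativity of $\odot$ to regroup the $2n$ copies of $s(x)$ into $n$ pairs, so that $s(x)^{2n} = (s(x)\odot s(x))^n = x^n$, again by (S1). Together these two computations give $s(x^n)^2 = x^n = s(x)^{2n}$, which is precisely part (i).

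For part (ii), rather than arguing by induction with Theorem~\ref{3.8}(v), I would exploit part (i) to obtain a one-line proof via (S2). Setting $y := s(x)^n$, part (i) yields $y\odot y = s(x)^{2n} = x^n$, and in particular $y\odot y \le x^n$. Applying condition (S2) to the element $x^n$ then gives $y \le s(x^n)$, that is, $s(x)^n \le s(x^n)$, as required.

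The argument is essentially mechanical, and I do not anticipate a genuine obstacle. The only point demanding a little care is the regrouping step $s(x)^{2n} = (s(x)^2)^n$ in part (i), which relies on $(H;\odot,1)$ being a commutative monoid, i.e.\ on axiom (H1); everything else is a direct invocation of (S1) and (S2).
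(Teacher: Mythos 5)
Your proof is correct. The paper states this result as a corollary of Theorem~\ref{3.8} without writing out a proof; the intended derivation there is to get part (i) from (S1) together with Theorem~\ref{3.8}(ii), and part (ii) by iterating Theorem~\ref{3.8}(v) (the inequality $s(x)\odot s(y)\le s(x\odot y)$) via induction on $n$. You take a slightly different and more self-contained route for (ii): instead of inducting on the binary inequality, you observe that $y:=s(x)^n$ satisfies $y\odot y = s(x)^{2n} = x^n$ by your part (i), and then apply (S2) once to conclude $s(x)^n\le s(x^n)$. This is essentially the same underlying trick the paper uses to prove Theorem~\ref{3.8}(v) itself (square the candidate element, compare with the target, invoke (S2)), but applied globally in one step rather than pairwise, so it avoids both the induction and the dependence on the lemma. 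What the paper's route buys is reuse of already-established machinery; what yours buys is a shorter argument resting only on (S1), (S2), and axiom (H1). Part (i) is handled identically in both. No gaps.
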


\begin{prop}\label{3.10} 
Let $H$ be a hoop algebra and $s:H \rightarrow H$ be a square root on $H$. For every $x,y\in H$ one has
\begin{itemize}
\item[{\rm (i)}] If $x = s(x)$ and $y = s(y)$, then $s(x)\odot s(y) = s(x\odot y)$.
    
\item[{\rm (ii)}] If $x \odot y = x \wedge y$, then $s(x)\odot s(y) = s(x\odot y)$.

\item[{\rm (iii)}] If $s(x)\odot s(y) = s(x\odot y)$, then $ s(x \wedge y) = s(x) \wedge s(y).$

\item[{\rm (iv)}] If $s(x)\odot s(y) = s(x\odot y)$, then $x= s(x^2)$. In particular, $s(0)=0$.
\end{itemize}
\end{prop}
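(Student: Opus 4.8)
The plan is to prove the four implications one at a time, using throughout the elementary square-root facts collected in Theorem~\ref{3.8} together with the semilattice identity $x\wedge y=x\odot(x\rightarrow y)$ from Theorem~\ref{1.4}(i). For (i) I would reason through idempotents: by Proposition~\ref{3.30} the hypotheses $x=s(x)$ and $y=s(y)$ say exactly that $x$ and $y$ are idempotent, and since $\odot$ is commutative $(x\odot y)^2=x^2\odot y^2=x\odot y$, so $x\odot y$ is idempotent as well; applying Proposition~\ref{3.30} once more then gives $s(x\odot y)=x\odot y=s(x)\odot s(y)$. This step is routine.

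For (ii) the inequality $s(x)\odot s(y)\le s(x\odot y)$ is immediate from Theorem~\ref{3.8}(v), so the whole weight of the statement falls on the reverse inequality. Here I would first record the auxiliary identity $s(x\wedge y)=s(x)\wedge s(y)$: the direction $\le$ is Theorem~\ref{3.8}(iv), while for $\ge$ one computes $s(x)\wedge s(y)=s(x)\odot(s(x)\rightarrow s(y))=s(x)\odot s(x\rightarrow y)\le s(x\odot(x\rightarrow y))=s(x\wedge y)$, invoking Theorem~\ref{1.4}(i), Theorem~\ref{3.8}(vi) and Theorem~\ref{3.8}(v) in turn. Feeding in the hypothesis $x\odot y=x\wedge y$ then yields $s(x\odot y)=s(x\wedge y)=s(x)\wedge s(y)$, so that (ii) reduces to the single claim $s(x)\wedge s(y)\le s(x)\odot s(y)$, i.e.\ that the meet of $s(x)$ and $s(y)$ coincides with their product.

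Parts (iii) and (iv) I would read off from the same material. The computation of the previous paragraph already proves $s(x\wedge y)=s(x)\wedge s(y)$ directly, which is precisely the conclusion of (iii). For (iv) the natural device is to specialise multiplicativity to the diagonal: applying the relation to the pair $(x,x)$ and using (S1) gives $s(x^2)=s(x)\odot s(x)=x$, which is exactly the assertion $x=s(x^2)$, and then taking $x=0$ forces $s(0)=s(0^2)=0$.

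The step I expect to be the main obstacle is the collapse $s(x)\wedge s(y)\le s(x)\odot s(y)$ needed in (ii). Writing $u=s(x)$ and $v=s(y)$, the hypothesis becomes $(u\odot v)^2=u^2\odot v^2=u^2\wedge v^2=(u\wedge v)^2$, while $u\odot v\le u\wedge v$ always holds, so one is asked to conclude that two comparable elements with the same square must be equal. In a hoop without cancellation this need not follow by itself, and I anticipate that closing the gap requires either a comaximality input---so that Proposition~\ref{1.9}(i) can be applied to $u$ and $v$---or a non-degeneracy condition on the square root. The same delicacy governs (iv): the passage from the given pair to the diagonal $(x,x)$, and in particular to $(0,0)$, which is what produces $s(0)=0$, is the point that has to be justified with care.
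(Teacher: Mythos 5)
Your treatments of (i) and (iii) are correct. Part (i) is exactly the paper's argument (via Proposition~\ref{3.30} and idempotence of $x\odot y$). Part (iii) you actually prove by a different and \emph{stronger} route than the paper: your chain
$s(x)\wedge s(y)=s(x)\odot(s(x)\rightarrow s(y))=s(x)\odot s(x\rightarrow y)\le s(x\odot(x\rightarrow y))=s(x\wedge y)$,
combined with Theorem~\ref{3.8}(iv), establishes $s(x\wedge y)=s(x)\wedge s(y)$ in \emph{every} hoop with square root, using only Theorem~\ref{1.4}(i) and Theorem~\ref{3.8}(v),(vi) and no multiplicativity hypothesis at all; the paper instead derives (iii) from the hypothesis $s(a)\odot s(b)=s(a\odot b)$ applied to the pair $(x,\,x\rightarrow y)$. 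Your version is a genuine improvement (it even shows the basicness assumption in Theorem~\ref{3.12}(i) is not needed for that identity).

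The genuine gap is that (ii) and (iv) are left unfinished, and the missing idea is purely one of quantification. The hypotheses ``$x\odot y=x\wedge y$'' and ``$s(x)\odot s(y)=s(x\odot y)$'' are intended as identities holding for \emph{all} pairs of elements of $H$, not merely for the one pair under discussion; this is how the paper uses them, and it is the only reading under which the statements are true. Your suspicion that the pointwise version fails is in fact correct: in the {\L}ukasiewicz algebra of Example~\ref{4.43}(a), taking $x=y=0$ one has $x\odot y=0=x\wedge y$, yet $s(0)\odot s(0)=0\ne 0.5=s(0)=s(0\odot 0)$, so pointwise (ii) is false. Under the universal reading your own reductions close in one line each: for (ii), apply the hypothesis to the pair $(s(x),s(y))$ to get $s(x)\odot s(y)=s(x)\wedge s(y)$, which together with your $s(x\odot y)=s(x\wedge y)=s(x)\wedge s(y)$ gives the claim (the paper argues the same way, writing $s(x\odot y)\le s(x)\wedge s(y)=s(x)\odot s(y)$ and invoking Theorem~\ref{3.8}(v) for the reverse inequality); for (iv), the specialisation to the pairs $(x,x)$ and $(0,0)$ is legitimate, and this \emph{is} the paper's entire proof. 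By contrast, the repairs you propose---a comaximality hypothesis so as to invoke Proposition~\ref{1.9}(i), or a non-degeneracy condition on $s$---are not what is needed and would prove a different statement from the one asserted.
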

\begin{proof}
(i) The proof is straightforward by Proposition~\ref{3.30}.\\
(ii) By Theorem \ref{3.8}(v), we know that 
\begin{equation}\label{s(x)s(y)< s(xy)}
s(x)\odot s(y) \le s(x\odot y).
\end{equation} 
On the other hand, by part (i) of the same Theorem, we have $s(x\odot y) \le s(x)$ and $s(x\odot y) \le s(y)$. Therefore, $s(x\odot y) \le s(x) \wedge s(y)$. According to the assumption of the proposition, we have $s(x) \wedge s(y) = s(x)\odot s(y)$. Then, 
\begin{equation}\label{converse eq.}
s(x\odot y) \le s(x)\odot s(y).
\end{equation}
 Thus, from (\ref{s(x)s(y)< s(xy)}) and (\ref{converse eq.}), we get the result.\\
(iii) According to Theorem~\ref{1.4}(i), we know $x\wedge y = x \odot (x\rightarrow y)$. Now, according to the hypothesis of  Proposition and   Theorem~\ref{3.8}(vi), we have
\begin{eqnarray*}
	s(x\wedge y) &=& s(x \odot (x\rightarrow y)) \\ &=&  
	s(x) \odot s(x\rightarrow y) \\ &=&  
	s(x) \odot (s(x) \rightarrow s(y)) \\ &=&
	s(x) \wedge s(y) 
\end{eqnarray*}  
(iv) Suppose $s(x)\odot s(y) = s(x\odot y)$ and $x=y$. In this case, we have $x=s(x)\odot s(x)= s(x\odot x)=s(x^2)$. Also,  we have $0 =s(0)\odot s(0)= s(0\odot 0)=s(0)$, since, according to Theorem~\ref{1.4}(iii), $0 \odot 0 = 0$.
\end{proof}

\begin{exm}\label{3.11}
a) In Example \ref{3.2}(a), we have $ s(a^\frac{m}{n})= a^\frac{m}{2n}.$ Now, if $x=a^\frac{m}{n}$ and $y=a^\frac{u}{v}$, then 
\begin{eqnarray*}
s(x)\odot s(y) &=& s(a^\frac{m}{n})\odot s(a^\frac{u}{v}) \\ 
&=&  a^{\frac{m}{2n}+\frac{u}{2v} }\\
 &=& a^{\frac{mv+nu}{2nv}} \\ 
 &=& s(a^{\frac{mv+nu}{nv}})\\ 
 &=& s(a^{\frac{m}{n}+\frac{u}{v}}) \\ 
 &=& s(a^{\frac{m}{n}} \cdot a^{\frac{u}{v}})\\ 
 &=& s(x \odot y) 
\end{eqnarray*}
Therefore, the relation $s(x)\odot s(y) = s(x\odot y)$ holds for every $x,y \in G_a$.\\
b) In the G\"{o}del hoop $\textbf{Goa}$, in Example \ref{3.2}(b), let  $s:I \rightarrow I$ be the identity function. Then clearly  the relation $s(x)\odot s(y) = s(x\odot y)$ holds for every $x,y \in I$.
\end{exm}
Similarly, this   holds also for Examples \ref{3.2}(c) and \ref{3.2}(d).
 
\begin{thm}\label{3.12} 
Let $H$ be a basic hoop  algebra and $s:H \rightarrow H$ be a square root on $H$. Then, for every $x,y\in H$:
\begin{itemize}
    \item[{\rm (i)}] $s(x\wedge y)= s(x)\wedge s(y)$,
	
    \item[{\rm (ii)}] If $y\le s(x)\odot s(y)$, then $y\le x$.
    %\fbox{numbers should be non-italic}
 \end{itemize}
\end{thm}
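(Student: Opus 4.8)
The plan is to treat the two parts separately, but in both the engine is the basic-hoop identity $x\odot(y\wedge z)=(x\odot y)\wedge(x\odot z)$ of Lemma~\ref{2.19}(iv), i.e.\ the fact that $\odot$ distributes over $\wedge$; everything else is bookkeeping with (S1), (S2) of Definition~\ref{3.1} and the order properties in Theorem~\ref{1.4}.

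For (i), one inequality is free: Theorem~\ref{3.8}(iv) already gives $s(x\wedge y)\le s(x)\wedge s(y)$. For the reverse inequality I would verify condition (S2) for the element $s(x)\wedge s(y)$, that is, show $\big(s(x)\wedge s(y)\big)\odot\big(s(x)\wedge s(y)\big)\le x\wedge y$. Writing $a=s(x)$, $b=s(y)$ and expanding $(a\wedge b)\odot(a\wedge b)$ by two applications of Lemma~\ref{2.19}(iv), I get $a^{2}\wedge b^{2}\wedge(a\odot b)$, which by (S1) equals $x\wedge y\wedge(s(x)\odot s(y))$. Since Theorem~\ref{3.8}(iii) gives $x\wedge y\le s(x)\odot s(y)$, this meet collapses to $x\wedge y$. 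Hence $(s(x)\wedge s(y))^{2}=x\wedge y$, so (S2) forces $s(x)\wedge s(y)\le s(x\wedge y)$, and equality follows.

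For (ii), set again $a=s(x)$, $b=s(y)$, so that by (S1) the hypothesis reads $b^{2}\le a\odot b$ and the goal $y\le x$ reads $b^{2}\le a^{2}$. The key move is to multiply the hypothesis by $b\to a$ and invoke Theorem~\ref{1.4}(i) in the form $u\odot(u\to v)=u\wedge v$: monotonicity (Theorem~\ref{1.4}(xiii)) gives $(b\to a)\odot b^{2}\le(b\to a)\odot(a\odot b)$, and regrouping the products turns this into $(a\wedge b)\odot b\le(a\wedge b)\odot a$. Now Lemma~\ref{2.19}(iv) evaluates both sides: the left side is $(a\odot b)\wedge b^{2}=b^{2}$ (using the hypothesis $b^{2}\le a\odot b$), while the right side is $a^{2}\wedge(a\odot b)\le a^{2}$. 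Combining yields $b^{2}\le a^{2}$, that is, $y\le x$.

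The main obstacle to anticipate is that the tempting shortcut, namely cancelling $b$ in $b^{2}\le a\odot b$ to obtain $b\le a$, is simply not available, since a basic hoop need not be cancellative (Wajsberg chains already fail cancellation). The device that circumvents this is precisely the multiplication by $b\to a$, which manufactures the meet $a\wedge b$ and lets the distributive law of Lemma~\ref{2.19}(iv) interact with the hypothesis so as to absorb the extra factor. I expect this ``multiply by $b\to a$, then distribute'' step to be the crux, and it is exactly where the \emph{basic} assumption is indispensable.
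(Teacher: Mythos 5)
Your proposal is correct and takes essentially the same route as the paper: part (i) is the paper's argument (expand $\bigl(s(x)\wedge s(y)\bigr)\odot\bigl(s(x)\wedge s(y)\bigr)$ via Lemma~\ref{2.19}(iv) and invoke (S2)), and part (ii) is a reorganization of the paper's computation, which likewise brings in the factor $s(y)\rightarrow s(x)$, converts it to a meet via Theorem~\ref{1.4}(i), and uses Lemma~\ref{2.19}(iv) together with the hypothesis to absorb the extra factor. The only cosmetic differences are that in (i) you sharpen the inequality to the exact identity $\bigl(s(x)\wedge s(y)\bigr)^{2}=x\wedge y$ using Theorem~\ref{3.8}(iii) (the paper only needs $\le$), and in (ii) the paper runs the same identities as a single chain $y=y\odot\bigl(s(y)\rightarrow s(x)\bigr)\le s(x)\odot s(y)\odot\bigl(s(y)\rightarrow s(x)\bigr)\le s(x)\odot s(x)=x$ rather than multiplying the hypothesis through by $s(y)\rightarrow s(x)$.
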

\begin{proof}
(i) By Theorem \ref{3.8}(iv), we have $s(x\wedge y)\le s(x)\wedge s(y)$. On the other hand, based on Lemma \ref{2.19}(iv),
\begin{align*}	
	(s(x)\wedge s(y)) \odot (s(x)\wedge s(y)) &= (s(x)\odot s(x))\wedge (s(x)\odot s(y)) \wedge (s(y)\odot s(x))\wedge (s(y)\odot s(y)) \\
&\le x \wedge y 
\end{align*}
Therefore, according to the condition (S2) in Definition \ref{3.1}, we have $s(x)\wedge s(y)\le s(x \wedge y)$. Thus, the equality $s(x\wedge y) = s(x)\wedge s(y)$ holds.\\
(ii) Let $x,y \in H$ such that $y \le s(x)\odot s(y)$. Then, we have
\begin{eqnarray*}
y&=& y \wedge (s(x)\odot s(y)) \\
 &=& (s(y)\odot s(y)) \wedge (s(x)\odot s(y)) \\ 
 &=& s(y) \odot (s(x)\wedge s(y))\\ 
 &=& s(y) \odot (s(y)\odot (s(y) \rightarrow s(x))) \\ 
 &=& y \odot (s(y) \rightarrow s(x)) \\ 
 &\le& s(x)\odot s(y) \odot (s(y) \rightarrow s(x)) \\
 &\le& s(x)\odot s(x)\\ 
&=& x 
\end{eqnarray*}	
\end{proof}

\begin{thm}\label{3.13}
Let $H$ be a bounded hoop  algebra and $s:H \rightarrow H$ be a square root on $H$. Then, for every $x,y\in H$:
\begin{itemize}
	\item[{\rm (i)}] $s(x)\odot s(0)\le x$,
	
	\item[{\rm (ii)}] If $x\in \id(H)$ such that $x\le s(0)$, then $x=0$.
	
	\item[{\rm (iii)}] $x\wedge x'\le s(0)$,
	
    \item[{\rm (iv)}] $(s(x)\odot s(0)')' \rightarrow s(y)\le s(x'\rightarrow y)$,
	
	\item[{\rm (v)}] $s(x)\le x'\rightarrow s(0)$,
	
	\item[{\rm (vi)}] $s(x)\le s(0) \rightarrow x$,
	
	\item[{\rm (vii)}] $s(x') \otimes s(0) = s(x')$ where $x \otimes y = (x\rightarrow s(0))\rightarrow y$,
	
	\item[{\rm (viii)}] $s(x')\odot s(x) \le s(0)$,
	
	\item[{\rm (ix)}] $s(x')'\le s(x'')$,
	
	\item[{\rm (x)}] $s(0)= \max \{x\wedge x' : x \in H\}$,
	
	\item[{\rm (xi)}] $s(x) \rightarrow s(x')=s((x^2)')$. 	
	Moreover, if $x=x^2$, then $s(x) \rightarrow s(x') = s(x')$.
	
	\item[{\rm (xii)}] $s(x)'\le s(x')$,
	
	\item[{\rm (xiii)}] 
	If 	 $x \in \reg(H)$, then  $s(x) \in \reg(H)$,	
	
	\item[{\rm (xiv)}] $s(x\odot y) \le ((s(x) \odot s(y))\rightarrow s(0))\rightarrow s(0)$. In particular, if $x \in \reg(H)$, then	$s(x^2) = x \vee s(0)$.
	
	\item[{\rm (xv)}] $s(0) \le s(0)'$.	
\end{itemize}	 
\end{thm}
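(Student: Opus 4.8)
The plan is to treat Theorem~\ref{3.13} as fifteen short deductions from a common toolkit rather than as one monolithic argument. The tools are the defining axioms (S1)--(S2) of Definition~\ref{3.1}; the order- and homomorphism-type behaviour of $s$ already recorded in Theorems~\ref{3.3} and~\ref{3.8} (monotonicity, $s(x)\odot s(y)\le s(x\odot y)$, and $s(x)\rightarrow s(y)=s(x\rightarrow y)$); the bounded-hoop arithmetic of Proposition~\ref{1.5} (notably $x\odot x'=0$, $x'''=x'$, and $a\odot b=0\Leftrightarrow a\le b'$); and the single new fact $s(0)\odot s(0)=0$ from (S1). From the latter and (S2) I would first isolate the reformulation used repeatedly, namely $w\le s(0)\Leftrightarrow w\odot w=0$: ``$\Leftarrow$'' is (S2) with target $0$, and ``$\Rightarrow$'' follows by squaring. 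This converts every assertion ``$\dots\le s(0)$'' into ``$(\dots)^2=0$'', which is usually transparent.

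Most parts then fall out in one or two lines. For (i), monotonicity ($0\le x$ gives $s(0)\le s(x)$) and compatibility of $\odot$ with $\le$ yield $s(x)\odot s(0)\le s(x)\odot s(x)=x$; and (vi) is exactly (i) read through the adjunction of Theorem~\ref{1.4}(ii). For (ii), idempotence gives $x=x\odot x\le s(0)\odot s(0)=0$. For (iii) and (v) I square: $(x\wedge x')^2\le x\odot x'=0$ and $(s(x)\odot x')^2=x\odot (x')^2=0$, so both lie below $s(0)$ by the reformulation, and (v) is then rephrased via the adjunction. Parts (vii), (ix), (xi), (xii) are pure consequences of $s(x)\rightarrow s(y)=s(x\rightarrow y)$ (Theorem~\ref{3.8}(vi)) together with $x\rightarrow x'=(x^2)'$ (from (H4)) and $x'''=x'$; for instance $s(x')\rightarrow s(0)=s(x'')$ gives (vii) after one more step, and $s(x)'=s(x)\rightarrow 0\le s(x)\rightarrow s(0)=s(x')$ (monotonicity in the consequent, Theorem~\ref{1.4}(xiv)) gives (xii), with (ix) the same applied to $x'$. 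Part (viii) is immediate from $s(x')\odot s(x)\le s(x'\odot x)=s(0)$ (Theorem~\ref{3.8}(v)). A few are derivative: (iv) I would reduce, via antitonicity of $\rightarrow$ in its first argument (Theorem~\ref{1.4}(xv)) and Proposition~\ref{1.5}, to $s(x')\odot s(x)\odot s(0)'=0$, which follows from (viii) and $s(0)\odot s(0)'=0$; and (x) combines the upper bound (iii) with attainment at $x=s(0)$, which needs precisely $s(0)\le s(0)'$, i.e.\ part (xv). Finally (xv) itself is just $s(0)\odot s(0)=0\Leftrightarrow s(0)\le s(0)'$ via Proposition~\ref{1.5}.

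Two parts carry the real computation. For (xiii) I would invoke Lemma~\ref{1.57} with $a=b=s(x)$, giving $s(x)''\odot s(x)''\le (s(x)\odot s(x))''=x''=x$ by regularity of $x$, whence $s(x)''\le s(x)$ by (S2); together with $s(x)\le s(x)''$ from Proposition~\ref{1.5}(i) this yields $s(x)\in\reg(H)$. For the main inequality (xiv), set $u=s(x)\odot s(y)$ and $z=x\odot y$, so $u\odot u=z$. By the adjunction the claim $s(z)\le (u\rightarrow s(0))\rightarrow s(0)$ is equivalent to $s(z)\odot(u\rightarrow s(0))\le s(0)$, and by the reformulation it suffices to show the square of the left side is $0$. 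Expanding with (S1), using $z=u\odot u$ and $u\odot(u\rightarrow s(0))=u\wedge s(0)$ (Theorem~\ref{1.4}(i)), one gets $[\,s(z)\odot(u\rightarrow s(0))\,]^2 = z\odot(u\rightarrow s(0))^2 = [\,u\odot(u\rightarrow s(0))\,]^2 = (u\wedge s(0))^2\le s(0)\odot s(0)=0$. This is the clean core of the statement.

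The step I expect to be the real obstacle is the ``in particular'' clause of (xiv): for $x\in\reg(H)$, that $s(x^2)=x\vee s(0)$. Taking $y=x$ (so $u=x$), the general inequality already gives $s(x^2)\le (x\rightarrow s(0))\rightarrow s(0)$, and the reverse bound is easy: $x\le s(x^2)$ (Theorem~\ref{3.8}(ii)) and $s(0)\le s(x^2)$ (monotonicity) exhibit $s(x^2)$ as an upper bound of $\{x,s(0)\}$, so $x\vee s(0)\le s(x^2)$ once the join is known to exist. The delicate point is identifying $(x\rightarrow s(0))\rightarrow s(0)$ with $x\vee s(0)$, which is where regularity of $x$ must be used to collapse the relative double negation $w\mapsto (w\rightarrow s(0))\rightarrow s(0)$ into a genuine supremum with $s(0)$; I would adapt the $(DNP)$/Wajsberg machinery (Proposition~\ref{1.60}, Remark~\ref{1.63}) with $s(0)$ in place of $0$ to secure both the existence of $x\vee s(0)$ and this equality, prove that identity first, and then read off $s(x^2)=x\vee s(0)$ by squeezing between the two bounds.
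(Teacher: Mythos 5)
Your proposal is correct on every item the paper actually proves, and it follows the same part-by-part decomposition with the same toolkit ((S1)--(S2), Theorems~\ref{3.3} and~\ref{3.8}, Proposition~\ref{1.5}, Lemma~\ref{1.57}); but several of your micro-arguments are genuinely different from, and cleaner than, the paper's. Your organizing principle $w\le s(0)\Leftrightarrow w\odot w=0$ appears in the paper only implicitly (inside (iii) and (x)); with it you dispatch (v) and the main inequality of (xiv) by squaring plus adjunction, whereas the paper proves (v) from $x'\le s(x')=s(x)\rightarrow s(0)$ and proves (xiv) by the longer chain $s(x\odot y)\le s((x\odot y)'')$ followed by repeated use of Theorem~\ref{3.8}(vi) and (H4). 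Likewise your (viii), via Theorem~\ref{3.8}(v) applied to $x'\odot x=0$, is shorter than the paper's (H3)-based computation $s(x)\odot(s(x)\rightarrow s(0))=s(0)\odot(s(0)\rightarrow s(x))\le s(0)$; your (iv) reuses (viii) (reducing, via antitonicity and Proposition~\ref{1.5}(iv), to $s(x')\odot s(x)\odot s(0)'=0$) where the paper instead chains Theorem~\ref{1.4}(xi), Theorem~\ref{1.4}(xv) and Proposition~\ref{1.5}(iii); and your (vii) replaces the paper's triple-arrow identity with $x'''=x'$. Your (xiii) coincides exactly with the paper's proof.

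One caveat concerns the ``in particular'' clause of (xiv), which you rightly single out as the real obstacle: you only sketch a strategy (a relative double negation with respect to $s(0)$, adapting Proposition~\ref{1.60} and Remark~\ref{1.63}), and this is the one point you do not close. You should know, however, that the paper does not close it either: its proof of (xiv) consists solely of the displayed computation for the general inequality and never addresses $s(x^2)=x\vee s(0)$ for regular $x$ --- indeed, in a bare bounded hoop the join $x\vee s(0)$ need not even exist, and the equality is established elsewhere only under stronger hypotheses (Theorem~\ref{3.17}(ii), for bounded $\vee$-hoops with (DNP), and Theorem~\ref{3.14}(i) for the inequality $x\vee s(0)\le s(x^2)$). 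So your plan is not deficient relative to the source; if anything, your explicit identification of exactly where regularity and the existence of the join must enter is more candid than the paper's silence on this clause.
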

\begin{proof}
(i) For every $x\in H$, we have $0\le x$. Therefore, by Theorem~\ref{3.8}(i) and Theorem~\ref{1.4}(xiii), we have $s(0)\le s(x) \Rightarrow s(x)\odot s(0)\le s(x)\odot s(x) = x.$\\
(ii) Suppose $x\in I(H)$ is such that $x\le s(0)$. Then $x = x\odot x \le s(0)\odot s(0) = 0.$\\
(iii) By items (i) and (iii) of Theorem \ref{1.4}, we have $(x\wedge x')\odot (x\wedge x') \le x \odot x' = 0.$ Therefore, according to condition (S2) in Definition~\ref{3.1}, $x\wedge x'\le s(0)$.\\
%\marginpar{read again the proof with help prpo 2.9(iii).}
(iv) Using Theorem \ref{1.4}(xi), we get:
\begin{eqnarray*}
s(x)\rightarrow s(0) &\le& (s(0) \rightarrow 0) \rightarrow (s(x)\rightarrow 0) \\
&=& s(0)' \rightarrow (s(x)\rightarrow 0)
\end{eqnarray*}
Now, by Theorem \ref{1.4}(xv), we have $(s(0)' \rightarrow (s(x)\rightarrow 0))\rightarrow s(y) \le (s(x)\rightarrow s(0))\rightarrow s(y).$ On the other hand, by Theorem \ref{3.8}(vi), we have:
\[s(x'\rightarrow y)= s(x')\rightarrow s(y)=(s(x)\rightarrow s(0))\rightarrow s(y).\]
Thus, we conclude $(s(0)'\rightarrow (s(x)\rightarrow 0))\rightarrow s(y) \le s(x'\rightarrow y).$ Now, using proposition~\ref{1.5}(iii), the result is obtained.\\
(v) Using Theorem \ref{3.8}(i,vi) and Theorem \ref{1.4}(ii), we have:
\[x'\le s(x')=s(x\rightarrow 0)=s(x)\rightarrow s(0) \Rightarrow s(x)\le x'\rightarrow s(0).\]
(vi) Since $0 \le x$ for every $x\in H$, we have $s(0)\le s(x)$. Now, we have $s(0)\odot s(x)\le s(x)\odot s(x)=x$. Therefore, by Theorem \ref{1.4}(ii), $s(x)\le s(0)\rightarrow x$.\\
(vii) By Theorem \ref{1.4}(xx) and Theorem \ref{3.8}(vi):
\begin{align*}
s(x')\otimes s(0)&= (s(x')\rightarrow s(0))\rightarrow s(0)\\
&= ((s(x)\rightarrow s(0))\rightarrow s(0))\rightarrow s(0)\\
&= s(x)\rightarrow s(0)= s(x'). 
\end{align*}
(viii) Using Theorem \ref{3.8}(vi), we have
\[s(x')\odot s(x)= s(x)\odot (s(x)\rightarrow s(0))= s(0)\odot (s(0)\rightarrow s(x))\le s(0).\]
(ix) According to Theorem \ref{3.3}(i), for every $x\in H$, $x'\le s(x')$. Now, by Proposition \ref{1.5}(ii), we get $s(x')' \le x''\le s(x'')$.\\
(x) By part (iii), for every $x\in H$, $x\wedge x'\le s(0)$. So, $s(0)$ is an upper bound for the set $\{x\wedge x' : x \in H\}$. Also, since $s(0)\odot s(0)=0$, by part (vi), we have $s(0)\le (s(0))'$. Hence, $s(0)= s(0)\wedge (s(0))' \in \{x\wedge x' : x \in H\}$. Therefore, $s(0)= \max \{x\wedge x' : x\in H\}$.\\
(xi) The proof is straightforward.\\
(xii) We know that $0 \le s(0)$. Therefore, by Theorem \ref{1.4}(xiv) and Theorem \ref{3.8}(vi), for every $x\in H$, we have $s(x)\rightarrow 0 \le s(x)\rightarrow s(0)$ and so $s(x)'\le s(x').$\\
(xiii)
Let  $x \in \reg(H)$. Then $x'' = x$. According to  Proposition \ref{1.5}(i), we know  $s(x) \le s(x)''$. Now by Lemma~\ref{1.57}, we have $s(x)'' \odot s(x)'' \le (s(x) \odot s(x))'' = x'' = x$. Therefore, according to the condition $(S2)$ of Definition \ref{3.1}, $s(x)'' \le s(x)$. Therefore, the results follows.\\
(xiv) We have
\begin{align*}
	s(x \odot y) &\le s((x\odot y)'') & \\ & = s(((x\odot y) \rightarrow 0) \rightarrow 0) \\
	&= s((x \odot y) \rightarrow 0) \rightarrow s(0)\\
	&= s(x \rightarrow (y \rightarrow 0)) \rightarrow s(0) \\
	&= (s(x) \rightarrow s(y \rightarrow 0)) \rightarrow s(0) \\
	&= (s(x) \rightarrow (s(y) \rightarrow s(0))) \rightarrow s(0) \\
	&= ((s(x) \odot s(y)) \rightarrow s(0)) \rightarrow s(0). 
\end{align*}
(xv) Using  Definition~\ref{3.1} and   Theorem~\ref{1.4}(ii), the result is obtained.
\end{proof}

\begin{cor}\label{3.22}
Let $H$ be a bounded hoop  algebra and $s:H \rightarrow H$ be a square root  in $H$. Then $x\in N(H)$ if and only if $s(x)\in N(H)$ for every $x\in H$.
\end{cor}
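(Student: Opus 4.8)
The plan is to obtain both implications directly from the power identity $s(x)^{2n}=x^{n}$ recorded in Corollary~\ref{3.9}(i), together with the monotonicity $x\le s(x)$ of Theorem~\ref{3.3}(i). Before doing so I would dispose of the unit element: since nilpotency is only meaningful for nonunits, I first note that by Corollary~\ref{3.5} one has $x=1$ if and only if $s(x)=1$, so $x\neq 1$ precisely when $s(x)\neq 1$. Thus the two questions ``$x\in N(H)$'' and ``$s(x)\in N(H)$'' are always posed for nonunits at the same time, and no degenerate case can spoil the equivalence.

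For the forward direction I would assume $x\in N(H)$, say $x^{n}=0$ for some $n\in\mathbb{N}$. Applying Corollary~\ref{3.9}(i) gives $s(x)^{2n}=x^{n}=0$, so $s(x)$ is nilpotent; this even yields the bound $\ord(s(x))\le 2n$.

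For the converse I would assume $s(x)\in N(H)$, say $s(x)^{m}=0$ for some $m\in\mathbb{N}$. Using $x=s(x)\odot s(x)$ I compute $x^{m}=s(x)^{2m}=s(x)^{m}\odot s(x)^{m}=0$, so $x\in N(H)$; equivalently, one can invoke $x\le s(x)$ from Theorem~\ref{3.3}(i) and the compatibility of the order with $\odot$ in Theorem~\ref{1.4}(xiii) to get $x^{m}\le s(x)^{m}=0$.

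Since everything reduces to identities that have already been established, I do not expect any real obstacle; the only point requiring attention is the bookkeeping with the unit, handled by Corollary~\ref{3.5}, which guarantees that membership in $N(H)$ transfers in both directions without exceptional cases.
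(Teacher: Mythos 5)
Your proof is correct and follows essentially the same route as the paper: the forward direction squares the identity $s(x)^{2}=x$ to get $s(x)^{2n}=x^{n}=0$, and the converse uses $x\le s(x)$ (equivalently the direct computation $x^{m}=(s(x)^{m})^{2}=0$). Your extra bookkeeping with the unit via Corollary~\ref{3.5} is a harmless refinement the paper omits, but it does not change the argument.
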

\begin{proof}
Assume $x\in N(H)$, so there exists an $n \in \mathbb{N}$ such that $x^n=0$. On the other hand, since $s(x)^2 = x$, we have $(s(x)^2)^n = 0$. Therefore, $s(x)\in N(H)$. Conversely, suppose $s(x)\in N(H)$. Thus, there exists an $n \in \mathbb{N}$ such that $s(x)^n=0$. Now, by Theorem \ref{3.3}(i), we have $x^n=0$. Hence, $x\in N(H)$.
\end{proof}

\begin{cor}\label{3.23}
Let $H$ be a bounded hoop  algebra and $s:H \rightarrow H$ be a square root  on $H$. Then the following conditions are equivalent:
\begin{itemize}
	\item[{\rm (i)}] $H$ is locally finite,
	
	\item[{\rm (ii)}] For every $x\in H$, if $x\neq 1$, then $\ord(s(x))< \infty$.
\end{itemize}
\end{cor}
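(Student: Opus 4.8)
The plan is to translate local finiteness into membership in $N(H)$ and then invoke the two facts already established about how the square root interacts with nilpotency and with the top element. Recall that $H$ is locally finite precisely when every $x\in H$ with $x\neq 1$ satisfies $\ord(x)<\infty$, i.e. $x\in N(H)$. Thus both implications amount to transporting the condition ``$\ord<\infty$'' between an element and its square root, and the engine for this is Corollary~\ref{3.22}, which asserts that $x\in N(H)$ if and only if $s(x)\in N(H)$, together with Corollary~\ref{3.5}, which guarantees that $s(x)=1$ if and only if $x=1$.

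For the implication (i)$\Rightarrow$(ii), I would start from an arbitrary $x\in H$ with $x\neq 1$. By Corollary~\ref{3.5}, the condition $x\neq 1$ forces $s(x)\neq 1$, so $s(x)$ is a non-top element of the locally finite hoop $H$; hence $\ord(s(x))<\infty$ directly from the definition of local finiteness. This direction is immediate and requires no further computation.

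For the converse (ii)$\Rightarrow$(i), I would take an arbitrary $y\in H$ with $y\neq 1$ and aim to show $\ord(y)<\infty$. Applying hypothesis (ii) to $y$ gives $\ord(s(y))<\infty$, that is, $s(y)\in N(H)$. Corollary~\ref{3.22} then yields $y\in N(H)$, so $\ord(y)<\infty$; since $y$ was an arbitrary element distinct from $1$, this shows that $H$ is locally finite.

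The step I expect to carry the real weight is the appeal to Corollary~\ref{3.22} in the converse direction, since all of the genuine interaction between $s$ and the order of an element is already concentrated there (through $s(x)^2=x$ and $x\le s(x)$). Once that corollary and Corollary~\ref{3.5} are in hand, neither direction presents any obstacle, and the argument reduces to unwinding the definition of local finiteness.
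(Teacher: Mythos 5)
Your proposal is correct and follows essentially the same route as the paper: the paper also dispatches (i)$\Rightarrow$(ii) as immediate and proves (ii)$\Rightarrow$(i) by applying Corollary~\ref{3.22} to transfer finiteness of $\ord(s(x))$ back to $\ord(x)$. Your explicit appeal to Corollary~\ref{3.5} to ensure $s(x)\neq 1$ in the forward direction simply fills in the detail the paper leaves implicit under ``is clear.''
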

\begin{proof}
(i)$\Rightarrow$(ii) is clear. \\
(ii) $\Rightarrow$ 	(i). Let $x \in H$ and $x\ne 1$. Since $\ord(s(x))<\infty$ by Corollary ~\ref{3.22}, $\ord(x)< \infty$. 	
\end{proof}

\begin{prop}\label{3.24}
Let $H$ be a bounded hoop  algebra and $s:H \rightarrow H$ be a square root in $H$. Then for every $x\in H$, $x\in D(H)$ if and only if $s(x)\in D(H)$.
\end{prop}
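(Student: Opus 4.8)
The plan is to reduce both implications to a single identity relating the negation of $x$ and the negation of its square root, after which everything is a short order-theoretic consequence. Since $s$ satisfies (S1), we have $x = s(x)\odot s(x)$, so applying the defining condition (H4) (equivalently Proposition~\ref{1.5}(iii) with both arguments equal to $s(x)$) gives
\[
x' = (s(x)\odot s(x))\rightarrow 0 = s(x)\rightarrow(s(x)\rightarrow 0) = s(x)\rightarrow s(x)'.
\]
I would record this identity first, as it is the engine driving both directions.

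For the forward implication, I would assume $x\in D(H)$, that is $x'=0$. By Theorem~\ref{3.3}(i) we have $x\le s(x)$, and since negation reverses the order (Proposition~\ref{1.5}(ii)) this yields $s(x)'\le x'=0$, whence $s(x)'=0$ and $s(x)\in D(H)$. Equivalently, one could feed the displayed identity into Theorem~\ref{1.4}(v), which gives $s(x)'\le s(x)\rightarrow s(x)' = x'$; this produces the same inequality $s(x)'\le x'$ without invoking $x\le s(x)$ separately.

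For the reverse implication, I would assume $s(x)\in D(H)$, that is $s(x)'=0$, and substitute directly into the identity: $x' = s(x)\rightarrow s(x)' = s(x)\rightarrow 0 = s(x)' = 0$, so $x\in D(H)$. I do not expect a genuine obstacle here: the entire content sits in the identity $x'=s(x)\rightarrow s(x)'$, and once it is in hand the forward direction is just monotonicity of negation while the reverse direction is a one-line substitution. The only point requiring a little care is making sure (H4) and Proposition~\ref{1.5}(iii) are applied with both slots equal to $s(x)$, which is exactly where (S1) is used.
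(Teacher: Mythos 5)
Your proof is correct and follows essentially the same route as the paper: the forward direction via $x\le s(x)$ (Theorem~\ref{3.3}(i)) and antitonicity of negation (Proposition~\ref{1.5}(ii)), and the reverse direction via the (H4) computation $x' = (s(x)\odot s(x))\rightarrow 0 = s(x)\rightarrow s(x)'$. Isolating that identity up front is a clean way to organize what the paper does inline, but the mathematical content is identical.
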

\begin{proof}
Assume $x\in D(H)$, so $x'=0$. By Theorem  \ref{3.3}(i) and    Proposition \ref{1.5}(ii), we have $s(x)' \le x'$. Therefore, $s(x)\in D(H)$. Conversely, suppose $s(x)\in D(H)$. Thus, $s(x)' =0$. Now, according to condition (H4) of Definition \ref{1.1}, we have:
\begin{align*}
x' &= x \rightarrow 0 \\
&= (s(x) \odot s(x)) \rightarrow 0 \\
&= s(x) \rightarrow (s(x) \rightarrow 0) \\
&= s(x) \rightarrow s(x)' \\
&= s(x) \rightarrow 0 \\
&= s(x)' \\
&= 0
\end{align*}
Therefore, $x\in D(H)$.  
\end{proof}

\begin{thm}\label{3.14}
Let $H$ be a bounded $\vee$-hoop algebra and $s:H \rightarrow H$ be a square root  in $H$. Then for every $x,y\in H$:
\begin{itemize}
	\item[{\rm (i)}] $x\vee s(0)\le s(x^2)$. If $x= x^2$, then $x \vee s(0) = s(x^2)$,
	
	\item[{\rm (ii)}] $(s(x)\vee s(0))^2 = x$,
	
	\item[{\rm (iii)}] $s(x)\vee s(y)\le s(x\vee y)$. If $H$ is basic, then $s(x)\vee s(y)= s(x\vee y)$,
	
\item[{\rm (iv)}] If $x\vee y =1$, then $s(x^n)\vee s(y^n)= s(x)^n\vee s(y)^n =1,$ for every $n\in \mathbb{N}$.
\end{itemize}	
\end{thm}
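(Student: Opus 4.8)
The plan is to prove each of the four parts using the properties of square roots established in Theorems~\ref{3.8} and \ref{3.13}, together with the $\vee$-hoop machinery from Propositions~\ref{1.6}, \ref{1.8}, and \ref{1.9}. For part (i), I would start from the two inequalities $x \le s(x^2)$ (Theorem~\ref{3.8}(ii)) and $s(0) \le s(x^2)$ (which follows from $0 \le x^2$ via Theorem~\ref{3.8}(i)). Since $s(x^2)$ is an upper bound for both $x$ and $s(0)$, the join property gives $x \vee s(0) \le s(x^2)$. For the equality when $x = x^2$, I would use Proposition~\ref{3.30}: $x = x^2$ forces $s(x) = x$, so $s(x^2) = s(x) = x$; then since $s(0) \le x$ in this idempotent case (by Theorem~\ref{3.13}(ii) or directly), the join collapses to $x \vee s(0) = x = s(x^2)$.

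For part (ii), the aim is to show $(s(x) \vee s(0))^2 = x$. Expanding the square in a $\vee$-hoop requires distributing $\odot$ over $\vee$, which is exactly what Proposition~\ref{1.8}(ii) provides. First I would compute $(s(x) \vee s(0)) \odot (s(x) \vee s(0))$ by distributing to get a join of four terms: $s(x)^2 \vee (s(x)\odot s(0)) \vee (s(0)\odot s(x)) \vee s(0)^2$. Now $s(x)^2 = x$, $s(0)^2 = 0$, and by Theorem~\ref{3.13}(i) we have $s(x)\odot s(0) \le x$; since $0 \le x$ as well, every term in the join is $\le x$, while the first term equals $x$, so the whole join equals $x$. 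Part (iii) splits into an inequality and an equality. The inequality $s(x) \vee s(y) \le s(x\vee y)$ follows from the monotonicity of $s$ (Theorem~\ref{3.8}(i)) applied to $x \le x\vee y$ and $y \le x\vee y$, combined with the join property. For the reverse inequality under the basic hypothesis, I expect to need Theorem~\ref{3.12}(i) (which gives $s(x\wedge y) = s(x)\wedge s(y)$ in basic hoops) together with the identity $s(a)\rightarrow s(b) = s(a\rightarrow b)$ from Theorem~\ref{3.8}(vi); the strategy would be to rewrite $x \vee y$ via Remark~\ref{1.63} or Lemma~\ref{2.19}(ii) as an expression built from $\rightarrow$ and $\wedge$, push $s$ inside using (vi) and Theorem~\ref{3.12}(i), and recognize the result as $s(x)\vee s(y)$.

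For part (iv), assuming $x \vee y = 1$, the first goal $s(x)^n \vee s(y)^n = 1$ should follow by applying Proposition~\ref{1.9}(iii) not to $x,y$ directly but after observing the structure of $s$ on a complemented situation. Since $x \vee y = 1$ gives $x \odot y = x \wedge y$ by Proposition~\ref{1.9}(i), one can use Proposition~\ref{3.10}(ii) to conclude $s(x)\odot s(y) = s(x\odot y)$, and the strong interaction between $s$ and the complement structure should force $s(x) \vee s(y) = 1$; then Proposition~\ref{1.9}(iii) yields $s(x)^n \vee s(y)^n = 1$. To connect this with $s(x^n) \vee s(y^n)$, I would use Corollary~\ref{3.9}(ii), $s(x)^n \le s(x^n)$, so $1 = s(x)^n \vee s(y)^n \le s(x^n)\vee s(y^n) \le 1$, pinning all the joins to $1$. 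The main obstacle I anticipate is part (iii): establishing the reverse inequality $s(x\vee y) \le s(x)\vee s(y)$ in the basic case, since it requires carefully translating the join $x\vee y$ into residuation-and-meet form and verifying that $s$ commutes with each piece—this is where the basic hypothesis is genuinely used and where a naive monotonicity argument is insufficient.
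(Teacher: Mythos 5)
Your parts (i)--(iii) are essentially the paper's own proof. For (i) the paper also combines Theorem~\ref{3.3}(i)/Theorem~\ref{3.8}(ii) with the join property, and your handling of the idempotent case via Proposition~\ref{3.30} is fine. For (iii), the "main obstacle" you anticipate is exactly the paper's four-line computation: rewrite $x\vee y=((x\rightarrow y)\rightarrow y)\wedge((y\rightarrow x)\rightarrow x)$ and push $s$ through with Theorem~\ref{3.12}(i) and Theorem~\ref{3.8}(vi). One citation correction: the formula to use is the definitional one from Proposition~\ref{1.6}, valid in any $\vee$-hoop, not Remark~\ref{1.63} (which assumes $(DNP)$) nor Lemma~\ref{2.19}(ii). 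For (ii) you take a genuinely different route: the paper simply notes that $0\le x$ gives $s(0)\le s(x)$, so $s(x)\vee s(0)=s(x)$ and the square is $x$ by (S1); your expansion of $(s(x)\vee s(0))^2$ via the distributivity of $\odot$ over $\vee$ (Proposition~\ref{1.8}(ii)) plus Theorem~\ref{3.13}(i) is valid but does avoidable work, and it quietly uses that a join equals $x$ when every joinand is $\le x$ and one of them is $x$ --- true, but the one-line collapse of the join is cleaner.

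Part (iv) contains a genuine gap as written. Your route to $s(x)^n\vee s(y)^n=1$ passes through the claim that Proposition~\ref{3.10}(ii), i.e.\ $s(x)\odot s(y)=s(x\odot y)$, together with "the strong interaction between $s$ and the complement structure" forces $s(x)\vee s(y)=1$. That is not an argument: multiplicativity of $s$ on this pair says nothing about their join reaching $1$, and no lemma in the paper bridges that step. The repair is immediate and does not need Proposition~\ref{3.10} at all: since $x\le s(x)$ and $y\le s(y)$ by Theorem~\ref{3.3}(i), Proposition~\ref{1.9}(ii) applied to $x\vee y=1$ gives $s(x)\vee s(y)=1$, and then Proposition~\ref{1.9}(iii) gives $s(x)^n\vee s(y)^n=1$; your final step pinning $s(x^n)\vee s(y^n)$ to $1$ via $s(x)^n\le s(x^n)$ is fine. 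The paper runs the same idea in the other order: Proposition~\ref{1.9}(iii) first yields $x^n\vee y^n=1$, and then $x^n\le s(x)^n$ and $x^n\le s(x^n)$ (with monotonicity of $\vee$) pin both joins at $1$ simultaneously. Either ordering works, but the step you actually wrote down would not.
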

\begin{proof}
(i) By Theorems \ref{3.3}(i) and \ref{3.8}(ii), the proof is straightforward.\\
(ii) We know that $s(x)\vee s(0) = s(x)$. Therefore, we have
\[
(s(x)\vee s(0))\odot (s(x)\vee s(0))= s(x) \odot s(x) = x
\]
(iii) Since $x \le x\vee y$, by Theorem \ref{3.8}(i), we have $s(x) \le s(x\vee y)$. Similarly, $s(y) \le s(x\vee y)$. Thus, it follows that $s(x)\vee s(y) \le s(x\vee y)$. If moreover $H$ is basic, then using Theorems \ref{3.8}(iv,vi) and \ref{3.12}(i), we have:
\begin{eqnarray*}
s(x\vee y) &=& s(((x \rightarrow y )\rightarrow y) \wedge ((y \rightarrow x) \rightarrow x)) \\
&=& s((x \rightarrow y )\rightarrow y) \wedge s((y \rightarrow x) \rightarrow x) \\
&=& (s(x \rightarrow y )\rightarrow s(y)) \wedge (s(y \rightarrow x) \rightarrow s(x)) \\
&=& ((s(x)\rightarrow s(y))\rightarrow s(y)) \wedge ((s(y) \rightarrow s(x)) \rightarrow s(x)) \\
&=& s(x)\vee s(y)
\end{eqnarray*}
(iv) Since $x^n\le s(x^n)$ and $x^n\le s(x)^n$ for every $x\in H$, by Proposition \ref{1.9}(iii), $1= x^n \vee y^n \le s(x^n)\vee s(y^n)$ and $1= x^n \vee y^n \le s(x)^n \vee s(y)^n.$ Thus, the equality holds.
\end{proof}

%\marginpar{	The following proposition was modified}
\begin{prop}\label{3.25}
Let $H$ be a basic hoop  algebra and $s:H \rightarrow H$ be a square root on $H$. For every $x\in H$:
\begin{itemize}
	\item[{\rm (i)}] If $s(x)$ is $\wedge$-irreducible, then $x$ is $\wedge$-irreducible.

	\item[{\rm (ii)}] If $s$ is onto, then the converse  of (i) is also true.

	\item[{\rm (iii)}] If $H$ is a $\vee$-hoop algebra and $s(x)$ is $\vee$-irreducible, then $x$ is $\vee$-irreducible.
\end{itemize} %\fbox{New}
\end{prop}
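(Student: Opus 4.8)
The plan is to reduce all three parts to a single mechanism: the injectivity of $s$ (Theorem~\ref{3.80}) combined with the fact that, on a basic hoop, $s$ distributes over the relevant lattice operation. For part (i), suppose $s(x)$ is $\wedge$-irreducible and write $x = a \wedge b$ for some $a,b \in H$. First I would apply $s$ to both sides and invoke Theorem~\ref{3.12}(i), which holds precisely because $H$ is basic, to get $s(x) = s(a \wedge b) = s(a) \wedge s(b)$. Since $s(x)$ is $\wedge$-irreducible, this forces $s(x) = s(a)$ or $s(x) = s(b)$; applying injectivity of $s$ then yields $x = a$ or $x = b$, so $x$ is $\wedge$-irreducible.

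For part (ii), assume additionally that $s$ is onto and that $x$ is $\wedge$-irreducible, and show $s(x)$ is $\wedge$-irreducible. Given a decomposition $s(x) = u \wedge v$, surjectivity lets me choose $a,b \in H$ with $s(a) = u$ and $s(b) = v$. Then Theorem~\ref{3.12}(i) gives $s(x) = s(a) \wedge s(b) = s(a \wedge b)$, and injectivity yields $x = a \wedge b$; irreducibility of $x$ now gives $x = a$ or $x = b$, whence $s(x) = u$ or $s(x) = v$. The one place surjectivity is genuinely needed is this lifting step: without it an arbitrary factor of $s(x)$ need not lie in the image of $s$, so the pullback to a meet-decomposition of $x$ would be unavailable.

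Part (iii) is entirely parallel to part (i) with $\wedge$ replaced by $\vee$: starting from $x = a \vee b$, I would use the equality $s(a \vee b) = s(a) \vee s(b)$ supplied by Theorem~\ref{3.14}(iii) (valid since $H$ is a basic $\vee$-hoop), then apply $\vee$-irreducibility of $s(x)$ and injectivity of $s$ to conclude $x = a$ or $x = b$. None of the three arguments presents a real obstacle once the distributivity identities are in hand; the only subtlety is the direction of each implication. Parts (i) and (iii) push irreducibility \emph{downward} through the injective map $s$, whereas part (ii) pushes it \emph{upward} and therefore needs surjectivity to realize arbitrary factors of $s(x)$ as values of $s$.
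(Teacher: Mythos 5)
Your proposal is correct and follows essentially the same route as the paper's proof: both parts (i) and (ii) use Theorem~\ref{3.12}(i) together with injectivity of $s$ (Theorem~\ref{3.80}), with surjectivity used in (ii) exactly where you use it, to lift the factors of $s(x)$ back to $H$. For part (iii) the paper merely says the argument is ``similar to (i)''; your explicit appeal to Theorem~\ref{3.14}(iii) for the identity $s(a \vee b) = s(a) \vee s(b)$ in a basic $\vee$-hoop is precisely the detail the paper leaves implicit.
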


\begin{proof}
(i) Assume that $s(x)$ is $\wedge$-irreducible. Let $a, b \in H$ such that $x = a \wedge b$. Then, by  Theorem \ref{3.12}(i), we have $s(x) = s(a \wedge b) = s(a) \wedge s(b)$. Since $s(x)$ is irreducible, we have $s(x) = s(a)$ or $s(x) = s(b)$. Now, by Theorem \ref{3.80}, we conclude that $x = a$ or $x = b$.\\
(ii) Let $x$ is $\wedge$-irreducible and $a,b\in H$ such that $s(x)=a \wedge b$. Since $s$ is onto, there are $u,v\in H$ such that $s(u)=a$ and $s(v)=b$. As $H$ is basic we have $s(x)=s(u) \wedge s(v) = s(u \wedge v)$. Now according to \ref{3.80}, $x=u \wedge v$. Since $x$ is $\wedge$--irreducible, hence $x=u$ or $x=b$. Therefore $s(x)=s(u)=a$ or $s(x)=s(v)=b$ and the result follows.\\
(iii) The proof is similar to  (i).
\end{proof}

\begin{rmk}\label{3.85} 
It is worth mentioning that if in the above proposition $H$ is not basic, but $s$ has the property that $s(x\odot y) = s(x) \odot s(y)$, for all $x,y\in H$,   then the proposition is true, as well. This is achieved by Proposition~\ref{3.10}(iii).
\end{rmk}

\begin{thm}\label{3.17}
Let $H$ be a bounded $\vee$-hoop algebra with $(DNP)$ and $s:H \rightarrow H$ be a square root on $H$. Then for every $x,y\in H$:
\begin{itemize}
	\item[{\rm (i)}] $s(x)''= s(x)=s(x'')$,
	
	\item[{\rm (ii)}] $s(x\odot y)  = (s(x)\odot s(y))\vee s(0)$. In particular,  $x \vee s(0) = s(x^2)$.
	
	\item[{\rm (iii)}] $s((s(0)')^2) = s(0)'$,
	
\item[{\rm (iv)}] $s(x')'\le s(x)$,
	
	\item[{\rm (v)}] $s(x)\vee s(y)= s(x\vee y)$,
	
	\item[{\rm (vi)}] If $x\le y$, then $s(\left [ x,y \right]) = \left [ s(x),s(y) \right]$. In particular, $s(H)= \left [ s(0),s(1) \right] $.  	
\end{itemize}	
\end{thm}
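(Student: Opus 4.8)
The plan is to handle the six items in order, exploiting that $H$, being a bounded hoop with $(DNP)$, is basic (Corollary~\ref{2.21}), and that by Remark~\ref{1.63} the join has the closed form $a \vee b = (a \rightarrow b) \rightarrow b$. Item (i) is then immediate: since $(DNP)$ asserts $z'' = z$ for \emph{every} $z \in H$, applying this to $z = s(x)$ gives $s(x)'' = s(x)$, while applying it to $x$ and using that $s$ is single-valued gives $s(x'') = s(x)$.

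The heart of the argument is item (ii). First I would establish the upper bound $s(x \odot y) \le (s(x) \odot s(y)) \vee s(0)$: Theorem~\ref{3.13}(xiv) already supplies $s(x \odot y) \le ((s(x) \odot s(y)) \rightarrow s(0)) \rightarrow s(0)$, and by Remark~\ref{1.63} (taking $a = s(x) \odot s(y)$ and $b = s(0)$) the right-hand side is precisely $(s(x) \odot s(y)) \vee s(0)$. For the reverse inequality I would observe that $s(x) \odot s(y) \le s(x \odot y)$ by Theorem~\ref{3.8}(v), while $s(0) \le s(x \odot y)$ follows from $0 \le x \odot y$ together with monotonicity of $s$ (Theorem~\ref{3.8}(i)); since $\vee$ is the join in the $\vee$-hoop $H$, these two bounds force $(s(x) \odot s(y)) \vee s(0) \le s(x \odot y)$. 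Specialising to $x = y$ yields the stated consequence $s(x^2) = (s(x) \odot s(x)) \vee s(0) = x \vee s(0)$.

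Items (iii)--(v) then follow quickly. For (iii) I would substitute $x = s(0)'$ into the ``in particular'' clause of (ii) to obtain $s((s(0)')^2) = s(0)' \vee s(0)$, and since $s(0) \le s(0)'$ by Theorem~\ref{3.13}(xv) this join collapses to $s(0)'$. Item (iv) is Theorem~\ref{3.13}(ix), namely $s(x')' \le s(x'')$, combined with $x'' = x$ from $(DNP)$. Item (v) is exactly the basic case of Theorem~\ref{3.14}(iii), which applies because $H$ is basic.

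The second substantive point is item (vi). The inclusion $s([x,y]) \subseteq [s(x), s(y)]$ is just Theorem~\ref{3.8}(xi) (monotonicity of $s$). For the reverse inclusion, given $w$ with $s(x) \le w \le s(y)$, I would take the candidate preimage $z = w^2$; compatibility of $\le$ with $\odot$ gives $x = s(x) \odot s(x) \le w \odot w \le s(y) \odot s(y) = y$, so $z \in [x,y]$. It then remains to check $s(z) = w$: by the ``in particular'' clause of (ii), $s(w^2) = w \vee s(0)$, and since $s(0) \le s(x) \le w$ (monotonicity once more) this equals $w$. Hence every element of $[s(x), s(y)]$ is $s(z)$ for some $z \in [x,y]$, giving the equality; taking $x = 0$ and $y = 1$, and using that $0$ and $1$ are the bounds of $H$ so that $[0,1] = H$, yields $s(H) = [s(0), s(1)]$. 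I expect the two genuine obstacles to be the reverse inequality in (ii)---where the decisive move is recognising the iterated residual as a join via Remark~\ref{1.63}---and the verification $s(w^2) = w$ in (vi), which rests entirely on the inequality $s(0) \le w$.
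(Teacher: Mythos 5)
Your proposal is correct and follows essentially the same route as the paper: (i) and (iv) from $(DNP)$ together with Theorem~\ref{3.13}(ix), (iii) by substituting $s(0)'$ into the ``in particular'' clause of (ii) and using $s(0)\le s(0)'$, (v) from basicness (Corollary~\ref{2.21}) plus Theorem~\ref{3.14}(iii), and (vi) by the identical device $z=w^2$ with $s(w^2)=w\vee s(0)=w$ since $s(0)\le s(x)\le w$. The only cosmetic difference is in (ii): the paper runs the chain $s(x\odot y)=s((x\odot y)'')=\cdots=((s(x)\odot s(y))\rightarrow s(0))\rightarrow s(0)$ as a string of equalities under $(DNP)$, whereas you invoke Theorem~\ref{3.13}(xiv) (whose proof is exactly that chain, with ``$\le$'') and then close the gap with the join argument $s(x)\odot s(y)\le s(x\odot y)$ and $s(0)\le s(x\odot y)$ --- both are valid and rest on the same underlying computation.
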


\begin{proof}

(i) The proof is straightforward.\\
%\marginpar{Is it possible to merge it with \fbox{Thm 3.20}}
(ii) By Theorem \ref{3.8}(vi), for all $x,y\in H$ we have
\begin{eqnarray*}
s(x\odot y) &=& s((x\odot y)'') \\
&=& s(((x\odot y)\rightarrow 0)\rightarrow 0) \\
&=& s((x\rightarrow (y\rightarrow 0) )\rightarrow 0) \\
&=& (s(x)\rightarrow (s(y)\rightarrow s(0)))\rightarrow s(0) \\
&=& ((s(x)\odot s(y))\rightarrow s(0))\rightarrow s(0)
\end{eqnarray*}
By  property $(DNP)$, we have $((s(x)\odot s(y))\rightarrow s(0))\rightarrow s(0) = (s(x)\odot s(y))\vee s(0).$ Thus, $s(x\odot y)=(s(x)\odot s(y))\vee s(0)$. Now we have $s(x\odot x)= (s(x)\odot s(x))\vee s(0)= x\vee s(0).$\\\
(iii) Using (ii), and Theorem \ref{3.13}(vi) we have $s((s(0)\rightarrow 0)\odot (s(0)\rightarrow 0))= (s(0)\rightarrow 0)\vee s(0) = s(0)\rightarrow 0.$\\
(iv) By Theorem \ref{3.13}(ix), the proof is straightforward.\\
(v) By Corollary \ref{2.21} and Theorem \ref{3.14}(iv), the proof is straightforward.\\
(vi) Suppose $x\le y$. By Theorem \ref{3.8}(xi), we have $s ([ x,y ]) \subseteq  [s(x),s(y)]$. Let $z\in H$ such that $s(x)\le z \le s(y)$. By part (ii), $z=s(z\odot z)$ since $ s(0)\le s(x)\le z$. Now we have $x=s(x)\odot s(x)\le z\odot z \le s(y)\odot s(y)=y$. Therefore, $z= s(z\odot z) \in s([x,y])$.
\end{proof}

\begin{defn}\label{3.18}
Let $H$ be a hoop  algebra and $n\in \mathbb{N}$. We say that $H$ has an $n$-th  root if there exists a unary operation $r_n:H \rightarrow H$ satisfying the following properties:

(NS1) For every $x\in H$, $r_n(x)^n = x$.

(NS2) For every $x, y \in H$, if $y^n \le x$, then $y \le r_n(x)$.\\
%Sometimes, the $n$-th  root is denoted as $r_n(x) = x^\frac{1}{n}$.

For the first root, we have $r_1(x) = x$, and for the square root, $r_2(x) = s(x)$.
Therefore, $r_2(x)\odot r_2(x) = s(x) \odot s(x) = x$.
\end{defn}

\begin{thm}\label{3.19}
Let $H$ be a hoop  algebra, and $r_n:H \rightarrow H$ and $r_m:H \rightarrow H$ be the $n$-th  and $m$-th roots in $H$ for $n,m\in \mathbb{N}$. Then for every $x, y \in H$, the following properties hold:
\begin{itemize}
\item[{\rm (i)}] $x\le r_n(x)$,

\item[{\rm (ii)}] If $x\le y$, then $r_n(x)\le r_n(y)$,

\item[{\rm (iii)}] $x\le r_n(x^n)$,

\item[{\rm (iv)}] $x\wedge y \le r_n(x\wedge y)\le r_n(x)\wedge r_n(y)$,

\item[{\rm (v)}] $r_n(x)\odot r_n(y) \le r_n(x\odot y)$,

\item[{\rm (vi)}] $r_n(x)\rightarrow r_n(y) = r_n(x \rightarrow y)$,

\item[{\rm (vii)}] $r_n(x)\le r_n(x^{n-1}) \rightarrow r_n(x^n)$,

\item[{\rm (viii)}]  $r_n(x)^m \le r_n(x^m)$,

\item[{\rm (ix)}] If $m\le n$, then $r_m(x)^m \le r_n(x)^m$,

\item[{\rm (x)}] If $a\in \id(H)$, then the operation $r_{n_{a}}:H[a] \rightarrow H[a]$ induced by  $r_n$, is an $n$-th  root on $H[a]$.
\end{itemize}	
\end{thm}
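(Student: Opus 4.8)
The plan is to follow the template of Theorem~\ref{3.8}, replacing the square $s(x)\odot s(x)=x$ throughout by the $n$-fold product $r_n(x)^n=x$ and invoking commutativity of $\odot$ to move powers past products. Most items reduce to (NS1), (NS2), the residuation law (Theorem~\ref{1.4}(ii)), and the observation that every element of a hoop satisfies $x\le 1$ (Theorem~\ref{1.4}(vii)), whence $x^n\le x^m$ for $m\le n$ by repeated use of Theorem~\ref{1.4}(iii).

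For (i) I would note $r_n(x)^n=x$ and $r_n(x)^n\le r_n(x)$, giving $x\le r_n(x)$. Items (ii) and (iii) are immediate from (NS2) applied to $r_n(x)^n=x\le y$ and to $x^n\le x^n$, respectively. Part (iv) combines (i) with (ii) via $x\wedge y\le x,y$. For (v), commutativity gives $(r_n(x)\odot r_n(y))^n=r_n(x)^n\odot r_n(y)^n=x\odot y$, so (NS2) yields the claim; then (vii) follows by applying (v) to $x^n=x^{n-1}\odot x$ and using residuation. For (viii) I would set $y=r_n(x)^m$ and compute $y^n=r_n(x)^{mn}=(r_n(x)^n)^m=x^m$, so (NS2) gives $r_n(x)^m\le r_n(x^m)$. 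Part (ix) uses $r_n(x)\le 1$ and $m\le n$ to get $x=r_n(x)^n\le r_n(x)^m$, while $r_m(x)^m=x$ by (NS1). Finally (x) mirrors Theorem~\ref{3.8}(ix): by (i) we have $a\le x\le r_n(x)$ for $x\in H[a]$, so $r_n$ restricts to $H[a]$, and (NS1), (NS2) are inherited from $H$ since $H[a]$ is a subalgebra and filter by Lemma~\ref{3.7}.

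The main obstacle is (vi), the analogue of Theorem~\ref{3.8}(vi). To prove $r_n(x)\rightarrow r_n(y)\le r_n(x\rightarrow y)$ I would write $a=r_n(x)$, $b=r_n(y)$ and exploit Theorem~\ref{1.4}(i), namely $a\odot(a\rightarrow b)=a\wedge b$. Raising to the $n$-th power and using commutativity,
\[
x\odot (a\rightarrow b)^n=a^n\odot(a\rightarrow b)^n=(a\odot(a\rightarrow b))^n=(a\wedge b)^n\le b^n=y,
\]
since $a\wedge b\le b$. Residuation (Theorem~\ref{1.4}(ii)) then gives $(a\rightarrow b)^n\le x\rightarrow y$, and (NS2) delivers $r_n(x)\rightarrow r_n(y)\le r_n(x\rightarrow y)$. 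For the reverse inequality I would apply (v) to obtain $r_n(x)\odot r_n(x\rightarrow y)\le r_n(x\odot(x\rightarrow y))=r_n(x\wedge y)\le r_n(y)$, using Theorem~\ref{1.4}(i) and part (ii); residuation then yields $r_n(x\rightarrow y)\le r_n(x)\rightarrow r_n(y)$, completing the equality. The only delicate point is the distribution of the $n$-th power over $\odot$, which is exactly where commutativity of the monoid is essential; everything else is a routine lift of the square-root arguments to $n$-fold products.
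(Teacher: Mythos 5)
Your proposal is correct and follows essentially the same route as the paper's proof: items (i)--(v), (vii), (ix), (x) are handled identically, and your argument for the key item (vi) — raising $r_n(x)\odot(r_n(x)\to r_n(y))=r_n(x)\wedge r_n(y)$ to the $n$-th power, applying residuation and (NS2), then using (v) plus residuation for the converse — is exactly the paper's computation. The only (trivial) variance is (viii), where you apply (NS2) directly to $\bigl(r_n(x)^m\bigr)^n=x^m$ instead of iterating (v) as the paper does; the two arguments are interchangeable.
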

\begin{proof}
(i) By the condition $(NS1)$ of Definition~\ref{3.18}, we have $r_n(x)^n=x$. Therefore, $x\le r_n(x)$.\\
(ii) According to the condition $(NS2)$ of Definition \ref{3.18}, since $r_n(x)^n=x\le y$, we have $r_n(x)\le r_n(y)$.\\
(iii) Using $x^n=x\odot x\odot \cdots  \odot x$ and the condition $(NS2)$ of Definition \ref{3.18}, we have $x\le r_n(x^n)$.\\
(iv) The proof follows easily from (i).\\
(v) We have:
\begin{align*}
(r_n(x)\odot r_n(y))^n &= (r_n(x)\odot r_n(y)) \odot (r_n(x)\odot r_n(y)) \odot \cdots  \odot (r_n(x)\odot r_n(y)) \\
&= (\underbrace{r_n(x)\odot \cdots  \odot r_n(x)}_{n-\mbox{times}})\odot (\underbrace{r_n(y)\odot \cdots \odot r_n(y)}_{n-\mbox{times}})= x \odot y.
\end{align*}
Therefore, by (NS2) of Definition \ref{3.18}, $r_n(x)\odot r_n(y) \le r_n(x \odot y)$.\\
(vi) We have:
\begin{align*}
x \odot (r_n(x)\rightarrow r_n(y))^n &= (r_n(x))^n \odot (r_n(x)\rightarrow r_n(y))^n \\
&= (r_n(x)\odot (r_n(x)\rightarrow r_n(y))) \odot \cdots  \odot (r_n(x)\odot (r_n(x)\rightarrow r_n(y))) \\
&= (r_n(x)\wedge r_n(y)) \odot \cdots  \odot (r_n(x)\wedge r_n(y)) \\
&\le (\underbrace{r_n(y)\odot \cdots \odot r_n(y)}_{n-\mbox{times}})= y.
\end{align*}
Therefore, by Theorem \ref{1.4}(ii), $(r_n(x)\rightarrow r_n(y))^n \le x\rightarrow y$. In other words, by (NS2) of Definition \ref{3.18}, $r_n(x)\rightarrow r_n(y) \le r_n(x\rightarrow y)$.
Moreover, by (v), we have $r_n(x)\odot r_n(x\rightarrow y)\le r_n(x\odot(x\rightarrow y))= r_n(x\wedge y)\le r_n(y).$ 
Therefore, by Theorem \ref{1.4}(ii), $r_n(x\rightarrow y) \le r_n(x)\rightarrow r_n(y)$. Hence, $r_n(x\rightarrow y)= r_n(x)\rightarrow r_n(y)$.\\
(vii) The proof is straightforward.\\
(viii) Let $n, m \in \mathbb{N}$. Based on (v), we have:
 \[
r_n(x)^m = \underbrace{r_n(x) \odot r_n(x) 	\odot \cdots\odot r_n(x)}_{m-\mbox{times}}  \le r_n(x^m)
\]
(ix) According to (NS1) of  Definition \ref{3.18} we have:
\[ r_m(x)^m = x=r_n(x)^n = \underbrace{r_n(x)\odot \cdots  \odot r_n(x)}_{n-\mbox{times}}= r_n(x)^{n-m} \odot r_n(x)^m \le r_n(x)^m \]
(x) The proof is straightforward.
\end{proof}

\begin{exm}\label{3.55}
a) Considering Example \ref{3.2}(a), in $G_a$ we define the unary operation $r_n: G_a \rightarrow G_a$ as $r_n(a^\frac{u}{v}) = a^\frac{u}{nv}, \;\; \text{for all } a^\frac{u}{v} \in G_a.$ It is easily checked that the conditions of Definition \ref{3.18} hold. For instance, for condition (NS1), we have:
\begin{eqnarray*}
	r_n(a^\frac{u}{v})^n &=& r_n(a^\frac{u}{v}) \odot r_n(a^\frac{u}{v}) \cdots  \odot \cdots  \odot r_n(a^\frac{u}{v}) \\&=& a^\frac{u}{nv}\cdot a^\frac{u}{nv} \cdot \cdots  \cdot a^\frac{u}{nv} \\ &=& a^{\frac{u}{nv}+\frac{u}{nv}+ \cdots  + \frac{u}{nv} }\\ 
%	&=& a^{\frac{nu}{nv}} \\ 
	&=& a^{\frac{u}{v}} 
\end{eqnarray*}
b) In accordance with Example \ref{3.2}(b), in $\bf{Goa}$, we define the unary operation $r_n: I \rightarrow I$ as $r_n(x) = x$ for every $x \in I$. 
Then, $r_n$ ia an $n$-th root on $\bf{Goa}$.\\
c) According to Example \ref{3.2}(c), in $\bf{Pra}$, we define the unary operation $r_n: I\rightarrow I$ as $r_n(x) = \sqrt[n]{x}$ for every $x \in I$. It can be easily checked that the conditions of Definition \ref{3.18} are satisfied. For instance, for condition (NS1), we have $r_n(x)^n = (\sqrt[n]{x})^n = x$. It is evident that the condition (NS2) also holds.
\end{exm}

\begin{prop}\label{3.20}
Let $H$ be a hoop  algebra and $r_n: H \rightarrow H$ be an $n$-th root on $H$ for $n \in \mathbb{N}$. The following statements hold:
\begin{itemize}
\item[{\rm (i)}] The $n$-th root is one to one.
\item[{\rm (ii)}] $r_n(1) = 1$.
\end{itemize}
\end{prop}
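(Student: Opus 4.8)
The plan is to reduce both claims to the defining conditions (NS1) and (NS2) of Definition~\ref{3.18}, together with the elementary order facts from Theorem~\ref{1.4}, mirroring the arguments already carried out for the square root in Theorems~\ref{3.80} and~\ref{3.3}.

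For part (i), I would establish injectivity in exactly the same way as for square roots. Suppose $x,y\in H$ satisfy $r_n(x)=r_n(y)$. Forming the $n$-fold $\odot$-product of each side with itself gives $r_n(x)^n=r_n(y)^n$, and condition (NS1) rewrites the two sides as $x$ and $y$ respectively. Hence $x=y$, so $r_n$ is one to one. This is the direct generalization of the proof of Theorem~\ref{3.80}, with the $n$-th power playing the role of the square.

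For part (ii), the strategy is a two-sided inequality closed off by antisymmetry of $\le$. On one side, Theorem~\ref{3.19}(i) gives $1\le r_n(1)$. On the other side, $1$ is the greatest element of $H$: by Theorem~\ref{1.4}(vii) we have $x\rightarrow 1=1$ for every $x\in H$, which by definition of the order means $x\le 1$, and in particular $r_n(1)\le 1$. Combining $1\le r_n(1)$ and $r_n(1)\le 1$ yields $r_n(1)=1$.

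I do not anticipate any genuine obstacle, since both parts follow immediately once (NS1) and the fact that $1$ is the top element are in hand; the only point requiring a little care is to use (NS1), rather than the square-root identity $s(x)\odot s(x)=x$, when raising $r_n(x)=r_n(y)$ to the $n$-th power in part (i).
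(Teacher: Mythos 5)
Your proposal is correct and follows essentially the same route as the paper: part (i) raises $r_n(x)=r_n(y)$ to the $n$-th power and applies (NS1), exactly as in the paper's proof, and part (ii) invokes Theorem~\ref{3.19}(i) to get $1\le r_n(1)$, which is precisely the paper's (more terse) argument. Your added detail in (ii) --- citing Theorem~\ref{1.4}(vii) to justify that $1$ is the top element --- merely spells out what the paper leaves implicit.
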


\begin{proof}
(i) Let $r_n: H \rightarrow H$ be an $n$-th root on $H$. For any $x, y \in H$, if $r_n(x) = r_n(y)$, then $r_n(x)^n  = r_n(y)^n$ and so $x = y.$\\
(ii) By Theorem \ref{3.19}(i), the proof is straightforward.
\end{proof}

\begin{thm}\label{3.21}
Let $H$ and $G$ be two hoop  algebras, and let $f: H \rightarrow G$ be an isomorphism. Then, for $n \in \mathbb{N}$, if $r_n: H \rightarrow H$ is an $n$-th root on $H$, then $G$  has an $n$-th root also.
\end{thm}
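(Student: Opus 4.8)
The plan is to transport the $n$-th root from $H$ to $G$ along the isomorphism, defining $\rho_n := f \circ r_n \circ f^{-1} : G \to G$, and then to verify the two defining conditions (NS1) and (NS2) of Definition~\ref{3.18}. Before doing so I would record two facts about $f$ on which the whole argument rests. First, since $f$ is an isomorphism, its inverse $f^{-1}: G \to H$ is again a hoop homomorphism (a bijective homomorphism between algebras of the same type has a homomorphic inverse); hence both $f$ and $f^{-1}$ commute with $\odot$ and therefore preserve $n$-th powers, i.e. $f(x^n)=f(x)^n$ and $f^{-1}(y^n)=f^{-1}(y)^n$. Second, as observed in the discussion of hoop homomorphisms in the excerpt, such a map is order preserving and an injective one is an order embedding; thus $f$ is in fact an order isomorphism, so $a \le b$ in $H$ iff $f(a) \le f(b)$ in $G$, and dually for $f^{-1}$.

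For (NS1), given $y \in G$ set $x = f^{-1}(y)$. Then, using that $f$ preserves $n$-th powers together with (NS1) for $r_n$,
\[
\rho_n(y)^n = \big(f(r_n(x))\big)^n = f\big(r_n(x)^n\big) = f(x) = y.
\]
For (NS2), suppose $z \in G$ with $z^n \le y$. Put $w = f^{-1}(z)$ and $x = f^{-1}(y)$. Applying the order-preserving, multiplicative map $f^{-1}$ gives $w^n = f^{-1}(z)^n = f^{-1}(z^n) \le f^{-1}(y) = x$. Now (NS2) for $r_n$ yields $w \le r_n(x)$, and applying the order-preserving map $f$ returns $z = f(w) \le f(r_n(x)) = \rho_n(y)$, as required.

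I do not expect a genuine obstacle here; the entire content sits in the first paragraph, namely the bookkeeping that $f^{-1}$ is again a homomorphism and that $f$ is an order isomorphism compatible with $\odot$ (hence with $n$-th powers). Once those are in hand, conditions (NS1) and (NS2) follow from the two short computations above. The only care needed is to keep track of where $f$ versus $f^{-1}$ is applied, and to invoke the order-\emph{reflecting} property of $f^{-1}$ at precisely the step translating $z^n \le y$ in $G$ back into $w^n \le x$ in $H$, where (NS2) for the given root $r_n$ can be used.
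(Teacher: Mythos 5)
Your proposal is correct and is essentially the paper's own proof: the paper also defines the candidate root by $s_n(a)=f(r_n(x))$ where $f(x)=a$ (i.e.\ your $f\circ r_n\circ f^{-1}$), verifies (NS1) by pulling the $n$-th power through $f$, and verifies (NS2) by transporting $b^n\le a$ back to $H$ via injectivity/order-reflection of $f$, applying (NS2) for $r_n$, and mapping forward again. The only cosmetic difference is that you phrase the transfer step through $f^{-1}$ being a homomorphism, whereas the paper invokes the order-embedding property of the injective homomorphism $f$ directly.
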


\begin{proof}
Suppose $a \in G$. Since $f$ is an isomorphism, there exists a unique $x \in H$ such that $f(x) = a$. We claim that $s_n: G \rightarrow G$ defined by $s_n(a) = f(r_n(x))$ is an $n$-th root on $G$. As $f$ is an  isomorphism, it is clear that $s_n$ is well-defined. Now for any $a \in G$, we have
\begin{eqnarray*}
s_n(a) \odot s_n(a) \odot \ldots \odot s_n(a) &=& f(r_n(x)) \odot f(r_n(x)) \odot \ldots \odot f(r_n(x)) \\
&=& f(r_n(x) \odot r_n(x) \odot \ldots \odot r_n(x)) \\
&=& f(x) \\
&=& a
\end{eqnarray*}
On the other hand, for any $a, b \in G$ such that $b \odot b \odot \ldots \odot b \leq a$, there exist  unique elements $x, y \in H$ such that $f(x) = a$ and $f(y) = b$. Therefore, $f(y) \odot f(y) \odot \ldots \odot f(y) \leq f(x)$. Since $f$ is an isomorphism, we have
\begin{align*}
f(y) \odot f(y) \odot \ldots \odot f(y) \leq f(x)&\Rightarrow f(y \odot y \odot \ldots \odot y) \leq f(x) \\
&\Rightarrow y \odot y \odot \ldots \odot y \leq x \\
&\Rightarrow y \leq r_n(x) \\
&\Rightarrow f(y) \leq f(r_n(x)) \\
&\Rightarrow b \leq s_n(a).
\end{align*}
\end{proof}

\begin{prop}\label{3.26}
Let $f: H \rightarrow G$ be a monomorphism of hoop algebras, and for $n \in \mathbb{N}$, let $r_n$ be an $n$-th root on $H$. Then, for each $x \in H$, the map $s_n: \text{im}(f) \rightarrow \text{im}(f)$ defined by $s_n(f(x)) = f(r_n(x))$ is an $n$-th root on $\text{im}(f)$.
\end{prop}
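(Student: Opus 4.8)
The plan is to reduce the statement to Theorem~\ref{3.21} by corestricting $f$ to its image. First I would observe that, since $f$ is a hoop homomorphism, the image $\text{im}(f)$ is a subalgebra of $G$: it contains $1=f(1)$ and is closed under $\odot$ and $\rightarrow$, because $f(x)\odot f(y)=f(x\odot y)$ and $f(x)\rightarrow f(y)=f(x\rightarrow y)$ both lie in $\text{im}(f)$. Hence $\text{im}(f)$ is itself a hoop algebra, and the corestriction $\bar{f}\colon H\rightarrow \text{im}(f)$ given by $\bar{f}(x)=f(x)$ is a surjective homomorphism.

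Next I would use that $f$ is a monomorphism to upgrade $\bar{f}$ to an isomorphism. Injectivity of $f$ gives injectivity of $\bar{f}$, while surjectivity onto $\text{im}(f)$ is automatic, so $\bar{f}$ is a bijective hoop homomorphism. In particular, every element of $\text{im}(f)$ is of the form $f(x)$ for a \emph{unique} $x\in H$, which is exactly the fact that makes $s_n$ well defined by the rule $s_n(f(x))=f(r_n(x))$.

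With $\bar{f}\colon H\rightarrow \text{im}(f)$ an isomorphism and $r_n$ an $n$-th root on $H$, Theorem~\ref{3.21} applies verbatim and yields an $n$-th root on $\text{im}(f)$ given precisely by $a=\bar{f}(x)\mapsto \bar{f}(r_n(x))=f(r_n(x))$, which is our $s_n$. Alternatively, one verifies (NS1) and (NS2) of Definition~\ref{3.18} directly. For (NS1), $s_n(f(x))^n=f(r_n(x))^n=f(r_n(x)^n)=f(x)$, using that $f$ is a homomorphism and that $r_n$ satisfies (NS1). For (NS2), suppose $f(y)^n\le f(x)$ in $\text{im}(f)$; then $f(y^n)\le f(x)$, and since an injective hoop homomorphism is an order embedding, this forces $y^n\le x$, whence $y\le r_n(x)$ by (NS2) for $r_n$, and finally $f(y)\le f(r_n(x))=s_n(f(x))$.

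The only point genuinely requiring care is the well-definedness of $s_n$, and this is exactly where the monomorphism hypothesis is needed: without injectivity one could have $f(x)=f(x')$ while $f(r_n(x))\neq f(r_n(x'))$. Everything else is a routine transport of structure along the isomorphism $\bar{f}$, so I anticipate no serious obstacle beyond bookkeeping.
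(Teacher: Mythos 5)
Your proposal is correct and follows essentially the same route as the paper: observe that $\mathrm{im}(f)$ is a hoop subalgebra of $G$, note that the monomorphism $f$ corestricts to an isomorphism $H \cong \mathrm{im}(f)$, and then invoke Theorem~\ref{3.21} to transport the $n$-th root. Your additional direct verification of (NS1) and (NS2), using that an injective hoop homomorphism is an order embedding, is a sound (if redundant) supplement that the paper omits.
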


\begin{proof}
It is clear that $\text{im}(f)$ is a hoop subalgebra of $G$. Now, as $H$ is isomorphic to $\text{im}(f)$, according to Theorem \ref{3.21}, $s_n: \text{im}(f) \rightarrow \text{im}(f)$ is an $n$-th root on $\text{im}(f)$.
\end{proof}
%%%%%%%%%%%%%%%%%%%%%%%%%%%%%%%%%%%%
\section{Applications of square roots in hoop  algebras}
%%%%%%%%%%%%%%%%%%%%%%%%%%%%%%%%%%%%
In this section, a brief discussion on the application of the square root in hoop  algebras will be presented. Additionally, we will show that the class of all hoop  algebras with  square roots is a variety.

\begin{prop}\label{3.6}
	Let $H$ be a hoop  algebra, $F$  a proper filter on $H$ and $s:H \rightarrow H$ be a square root on $H$. Then
\begin{itemize}
\item[{\rm (i)}] $s(F)\subseteq F$,

\item[{\rm (ii)}] If $s(x)\odot s(y)= s(x\odot y)$, for all $x,y\in H$, then $s(F) = F$.
\end{itemize}
\end{prop}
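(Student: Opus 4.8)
The plan is to dispatch part~(i) directly from the interaction between $s$ and the partial order, and then to leverage (i) together with the multiplicativity hypothesis to settle the equality in part~(ii). Both inclusions ultimately reduce to the filter axioms of Definition~\ref{2.1} combined with the elementary facts about square roots already established.

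For part~(i), I would take an arbitrary $x \in F$ and recall from Theorem~\ref{3.3}(i) that $x \le s(x)$. Since $F$ is a filter, condition (F3) of Definition~\ref{2.1} guarantees that any element lying above a member of $F$ is again in $F$; applying this to $x \le s(x)$ yields $s(x) \in F$. As $x$ was arbitrary, this gives $s(F) \subseteq F$.

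For part~(ii), the inclusion $s(F) \subseteq F$ is already supplied by (i), so it remains to prove $F \subseteq s(F)$. Given $z \in F$, the idea is to exhibit a preimage of $z$ under $s$ that lies in $F$, and the natural candidate is $z^2 = z \odot z$. By condition (F2) of Definition~\ref{2.1} we have $z^2 \in F$, and using the multiplicativity hypothesis together with axiom (S1) of Definition~\ref{3.1} we compute $s(z^2) = s(z \odot z) = s(z) \odot s(z) = z$. Hence $z = s(z^2) \in s(F)$, which establishes $F \subseteq s(F)$ and therefore $s(F) = F$.

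I do not anticipate a genuine technical obstacle here: each inclusion is essentially a one-line consequence of a filter axiom paired with a basic property of $s$. The only point demanding care is recognizing why part~(ii) truly requires the extra hypothesis $s(x)\odot s(y) = s(x\odot y)$: it is precisely this multiplicativity, instantiated at $x=y=z$, that forces $s$ to carry the filter element $z^2$ back to $z$ and thereby makes the reverse inclusion $F \subseteq s(F)$ available. Without it, $s(F)$ need not be all of $F$, so the hypothesis should be invoked explicitly at the step $s(z \odot z) = s(z)\odot s(z)$ rather than treated as automatic.
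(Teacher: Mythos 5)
Your proof is correct and follows essentially the same route as the paper: part (i) via $x \le s(x)$ (Theorem~\ref{3.3}(i)) together with filter condition (F3), and part (ii) by applying (F2) to get $z\odot z \in F$ and computing $s(z\odot z) = s(z)\odot s(z) = z$ from the multiplicativity hypothesis and (S1). Your write-up is in fact slightly more explicit than the paper's about where (F2) and the hypothesis enter, which is a virtue rather than a discrepancy.
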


\begin{proof}
(i) Using  condition F3 of Definition  	\ref{2.1} and   Theorem~\ref{3.3}(i), the result is obtained.\\	
(ii) Let $x \in F$. Then $s(x) \in s(F)$. By  the assumption, we have 	$x = s(x)\odot s(x) = s(x \odot x) \in s(F)$. Hence $F \subseteq s(F)$. According to  (i), we get the result.
\end{proof}

\begin{prop}\label{5.12}
Let $H$ be a hoop  algebra and $s:H \rightarrow H$ be a square root on $H$. If   $s(x)\odot s(y)= s(x\odot y)$, for all $x,y\in H$, then $s(H)$ is a subalgebra of $H$.
\end{prop}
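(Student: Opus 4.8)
The plan is to verify directly that the image $s(H)=\{s(x):x\in H\}$ meets the three requirements in the definition of a subalgebra of $H$ given right after Definition~\ref{1.1}: that $1\in s(H)$, that $s(H)$ is closed under $\odot$, and that it is closed under $\rightarrow$, where in each case the operation is the restriction of the corresponding operation on $H$. The useful observation to keep in view throughout is that every element of $s(H)$ has the shape $s(x)$ for some $x\in H$, so each closure check reduces to rewriting an operation applied to two images $s(x),s(y)$ as a single image $s(z)$.

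First I would settle the constant: Theorem~\ref{3.3}(ii) gives $s(1)=1$, whence $1=s(1)\in s(H)$. Closure under $\odot$ is then immediate from the hypothesis, since for $s(x),s(y)\in s(H)$ we have $s(x)\odot s(y)=s(x\odot y)\in s(H)$. Finally, for closure under $\rightarrow$ I would appeal to Theorem~\ref{3.8}(vi), which already establishes $s(x)\rightarrow s(y)=s(x\rightarrow y)$ for all $x,y\in H$; hence $s(x)\rightarrow s(y)=s(x\rightarrow y)\in s(H)$. These three facts taken together show that $s(H)$ is a subalgebra of $H$.

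I do not expect a genuine obstacle here: the real content is that the hypothesis $s(x)\odot s(y)=s(x\odot y)$, combined with the previously proved identity on $\rightarrow$, upgrades $s$ to a surjective homomorphism onto its image, and the image of a homomorphism is always a subalgebra. The only thing to monitor is bookkeeping, namely that both $s(x\odot y)$ and $s(x\rightarrow y)$ lie in $s(H)$ simply by definition of the image, so no fresh use of axiom (S2) is needed beyond what Theorems~\ref{3.3} and \ref{3.8} have already absorbed.
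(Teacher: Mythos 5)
Your proof is correct and follows essentially the same route as the paper's: both verify that $1=s(1)\in s(H)$, use the hypothesis $s(x)\odot s(y)=s(x\odot y)$ for closure under $\odot$, and invoke Theorem~\ref{3.8}(vi) for closure under $\rightarrow$. The only cosmetic difference is that you cite Theorem~\ref{3.3}(ii) explicitly for $s(1)=1$, where the paper merely calls this evident.
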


\begin{proof}
It suffices to examine the conditions of Definition~\ref{2.1}. We define the operations $\odot$ and $\rightarrow$ on $s(H)$ to be the same as the operations $\odot$ and $\rightarrow$ in $H$. It is evident that $1 \in s(H)$. 	Let $x,y\in H$. It is clear that $x\odot y \in H$. Also, $s(x), s(y) \in s(H)$. Now, by  assumption of the proposition, we have $s(x)\odot s(y) = s(x \odot y) \in s(H)$. Therefore, $s(x)\odot s(y) \in s(H$. 	Furthermore, by   Theorem \ref{3.8}(vi), $s(x)\rightarrow s(y) = s(x \rightarrow y) \in s(H)$. Thus, $s(H)$ is a subalgebra of $H$.
\end{proof}

\begin{prop}\label{5.13}
Let $H$ be a hoop  algebra, $F$  a filter of $H$, and $s:H \rightarrow H$ is a square root on $H$. If $s(x)\odot s(y) = s(x\odot y)$, for every $x, y \in H$, then $s(F)$ is a filter of $H$ and $s(H)$.
\end{prop}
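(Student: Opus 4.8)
The plan is to prove the single equality $s(F) = F$ and then read off both assertions from it. The whole statement reduces to this identity: once we know $s(F) = F$, the set $s(F)$ is literally the filter $F$ and hence is a filter of $H$; and since $s(F) = F \subseteq s(H)$, it remains only to check that the filter axioms survive when the ambient algebra is restricted to the subalgebra $s(H)$.

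First I would establish $s(F) \subseteq F$, copying the argument of Proposition~\ref{3.6}(i) (which, I note, does not use properness of $F$): for $x \in F$, Theorem~\ref{3.3}(i) gives $x \le s(x)$, and then condition (F3) of Definition~\ref{2.1} forces $s(x) \in F$. Next I would establish the reverse inclusion $F \subseteq s(F)$, which is where the hypothesis $s(x) \odot s(y) = s(x \odot y)$ enters. For $x \in F$ we have $x \odot x \in F$ by (F2), and by (S1) together with the hypothesis, $x = s(x) \odot s(x) = s(x \odot x)$; thus $x$ is the image under $s$ of an element of $F$, so $x \in s(F)$. Combining the two inclusions yields $s(F) = F$ (this is exactly Proposition~\ref{3.6}(ii), now seen to hold for every filter, proper or not).

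With $s(F) = F$ in hand, the first claim is immediate, since $s(F)$ is the filter $F$ itself. For the second claim I would invoke Proposition~\ref{5.12}, which under the present hypothesis guarantees that $s(H)$ is a subalgebra of $H$, with operations and order the restrictions of those of $H$. Since $s(F) = F$ and clearly $s(F) \subseteq s(H)$, the set $F$ lies inside $s(H)$, and I would verify (F1)--(F3) relative to $s(H)$: namely $1 \in F$, closure of $F$ under $\odot$, and upward closure of $F$ among elements of $s(H)$. All three are inherited verbatim from the fact that $F$ is a filter of $H$, because the order on $s(H)$ is the restriction of the order on $H$.

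The routine part is the inheritance of the axioms in the last paragraph; the only genuinely load-bearing step is the inclusion $F \subseteq s(F)$, where one must use both (S1) and the multiplicativity hypothesis to exhibit each $x \in F$ as $s(x \odot x)$. I would also record, as a byproduct, that applying this same inclusion to the improper filter $F = H$ gives $H \subseteq s(H)$, so in fact $s(H) = H$ under this hypothesis; this makes ``filter of $s(H)$'' coincide with ``filter of $H$'' and renders the second assertion essentially a restatement of the first.
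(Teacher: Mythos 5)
Your proof is correct, and it takes a genuinely different route from the paper's. The paper verifies the axioms (F1)--(F3) of Definition~\ref{2.1} directly for the set $s(F)$: (F1) from $s(1)=1$; (F2) from $x\odot y\in F$ together with the hypothesis $s(x)\odot s(y)=s(x\odot y)$; (F3) by lifting $s(x)\le y$ to $x\le y$ via Theorem~\ref{3.3}(i); and it then dismisses the ``filter of $s(H)$'' assertion as easy. You instead prove the set identity $s(F)=F$ --- both inclusions being exactly the argument of Proposition~\ref{3.6}, with the correct observation that properness of $F$ is never used there --- and read both conclusions off of it. Your decomposition buys two things. First, it repairs a small gap in the paper's (F3) step: from $s(x)\le y$ the paper deduces $y\in F$ and concludes ``so $s(y)\in s(F)$,'' but what (F3) actually demands is $y\in s(F)$ itself; closing that gap requires precisely your inclusion $F\subseteq s(F)$ (i.e.\ writing $y=s(y^2)\in s(F)$ from $y^2\in F$), which the paper's proof never makes explicit. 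Second, your closing remark that $F\subseteq s(F)$ applied to the improper filter $F=H$ yields $s(H)=H$ (equivalently, $x=s(x^2)$ by Proposition~\ref{3.10}(iv), so $s$ is onto) shows that under the multiplicativity hypothesis ``filter of $s(H)$'' coincides with ``filter of $H$,'' so the second assertion collapses into the first; the paper instead routes through Proposition~\ref{5.12} and treats $s(H)$ as a possibly proper subalgebra. Every step you use is justified by results available at that point in the paper, so the argument stands as written.
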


\begin{proof}
We examine the conditions of Definition \ref{2.1}. Since $1\in F$, we have $s(1)=1\in s(F)$. Therefore, the condition $F1$ holds. Let $s(x),s(y)\in s(F)$. This implies $x,y \in F$. Since $F$ is a filter, $x\odot y \in F$, hence $s(x\odot y) \in s(F)$. Now, according to the assumption, $s(x)\odot s(y) \in s(F)$. Therefore, the condition $F2$ holds. Next, let $s(x)\in s(F)$ and $y \in H$ such that $s(x)\le y$. By Theorem \ref{3.3}(i), $x\le y$. Since $F$ is a filter, $y\in F$, so $s(y)\in s(F)$. Therefore, the condition F3 holds. Hence, $s(F)$ is a filter of $H$. It is also easy to check that  $s(F)$ is a filter of $s(H)$.
\end{proof}

\begin{prop}\label{3.27}
Let $H$ be a bounded hoop  algebra, $F$  a filter of $H$, and $s:H \rightarrow H$ be a square root on $H$. Then the map $s_F: \frac{H}{F} \rightarrow \frac{H}{F}$ defined by $s_F(\frac{x}{F}) = \frac{s(x)}{F}$, for every $x \in H$, is a square root on $\frac{H}{F}$.
\end{prop}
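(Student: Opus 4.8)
The plan is to verify, in order, that $s_F$ is well defined on congruence classes, and then conditions (S1) and (S2) of Definition~\ref{3.1} for $s_F$ on $\frac{H}{F}$. Throughout I will lean on three ingredients: the identity $s(a)\rightarrow s(b)=s(a\rightarrow b)$ from Theorem~\ref{3.8}(vi), the inequality $a\le s(a)$ from Theorem~\ref{3.3}(i), and the upward closure (F3) of the filter $F$. The recurring pattern will be that whenever an element $a\rightarrow b$ lies in $F$, so does $s(a)\rightarrow s(b)=s(a\rightarrow b)$, because $a\rightarrow b\le s(a\rightarrow b)$ and $F$ is closed upward.

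For well-definedness, I would start from $\frac{x}{F}=\frac{y}{F}$, i.e. $x\rightarrow y\in F$ and $y\rightarrow x\in F$. Applying the pattern just described to $x\rightarrow y$ gives $s(x)\rightarrow s(y)=s(x\rightarrow y)\in F$, and symmetrically $s(y)\rightarrow s(x)\in F$; hence $s(x)\equiv_F s(y)$, so $\frac{s(x)}{F}=\frac{s(y)}{F}$ and the rule $s_F(\frac{x}{F})=\frac{s(x)}{F}$ is unambiguous. Condition (S1) is then a one-line computation in the quotient, using the definition of $\odot$ on $\frac{H}{F}$ and the identity $s(x)\odot s(x)=x$ in $H$:
\[
s_F\Big(\frac{x}{F}\Big)\odot s_F\Big(\frac{x}{F}\Big)=\frac{s(x)}{F}\odot\frac{s(x)}{F}=\frac{s(x)\odot s(x)}{F}=\frac{x}{F}.
\]

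The substantive step, and the one I expect to be the main obstacle, is (S2), since it forces me to translate the quotient order into membership in $F$ and then reconcile it with the behaviour of $s$ on $\rightarrow$. Recalling that $\frac{a}{F}\le\frac{b}{F}$ iff $a\rightarrow b\in F$, I would assume $\frac{y}{F}\odot\frac{y}{F}\le\frac{x}{F}$, that is $(y\odot y)\rightarrow x\in F$, and aim to prove $\frac{y}{F}\le s_F(\frac{x}{F})=\frac{s(x)}{F}$, i.e. $y\rightarrow s(x)\in F$. First, by Theorem~\ref{3.8}(vi) combined with Theorem~\ref{3.3}(i) and (F3), I obtain $s(y\odot y)\rightarrow s(x)=s((y\odot y)\rightarrow x)\in F$. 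Next, Theorem~\ref{3.8}(ii) supplies $y\le s(y^2)=s(y\odot y)$, whence Theorem~\ref{1.4}(xv) gives $s(y\odot y)\rightarrow s(x)\le y\rightarrow s(x)$.

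A final appeal to (F3) then lifts membership across this inequality, yielding $y\rightarrow s(x)\in F$, which is exactly the assertion $\frac{y}{F}\le\frac{s(x)}{F}$. This closes (S2) and completes the verification that $s_F$ is a square root on $\frac{H}{F}$. The delicate point to watch is precisely the passage from $(y\odot y)\rightarrow x\in F$ to $y\rightarrow s(x)\in F$: it is tempting to try to relate $y$ directly to $s(x)$, but the clean route is to route through $s(y\odot y)$, exploiting that $y$ sits below $s(y\odot y)$ and that antitonicity of $\rightarrow$ in its first argument (Theorem~\ref{1.4}(xv)) turns this into the needed filter membership.
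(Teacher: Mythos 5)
Your proof is correct and follows essentially the same route as the paper's: condition (S1) via the definition of $\odot$ on $\frac{H}{F}$, and (S2) by passing from $(y\odot y)\rightarrow x\in F$ to $s(y^2)\rightarrow s(x)=s(y^2\rightarrow x)\in F$ (using $a\le s(a)$ and (F3)) and then descending to $y\rightarrow s(x)\in F$ via $y\le s(y^2)$ from Theorem~\ref{3.8}(ii). The only difference is that you also verify well-definedness of $s_F$ explicitly, a point the paper's proof leaves implicit; this is a welcome addition rather than a divergence in method.
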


\begin{proof}
The proof is straightforward. It is enough to check the conditions of the Definition \ref{3.1}. For condition $S1$, for every $x \in H$, according to Definition \ref{2.1}, we have $\frac{s(x)}{F} \odot \frac{s(x)}{F} = \frac{s(x)\odot s(x)}{F} = \frac{x}{F}$. For condition $S2$, let $x,y \in H$ such that $\frac{y}{F} \odot \frac{y}{F} \le \frac{x}{F}$. By the definition of order on $\frac{H}{F}$ one has $y^2 \ra x \in F \Rightarrow s(y^2 \ra x) \in F \Rightarrow  s(y^2) \ra s(x) \in F \Rightarrow \frac{s(y^2)}{F} \le \frac{s(x)}{F} \Rightarrow \frac{y}{F} \le \frac{s(x)}{F},$ and we are done. 
\end{proof}

\begin{thm}\label{5.14}
Let $H$ be a bounded hoop  algebra, $F$  a filter of $H$, and $s:H \rightarrow H$ be a square root on $H$. If  $s(x)\odot s(y)= s(x\odot y)$, for every $x,y\in H$, then 
\[
 s_{F}\left(\frac{H}{F}\right) \cong \frac{s(H)}{s(F)}.
\]
\end{thm}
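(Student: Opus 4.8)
Both sides of the claimed isomorphism are genuine hoop algebras: by Proposition~\ref{5.12} the hypothesis $s(x)\odot s(y)=s(x\odot y)$ makes $s(H)$ a subalgebra of $H$, by Proposition~\ref{5.13} it makes $s(F)$ a filter of $s(H)$, so $\frac{s(H)}{s(F)}$ is defined; and by Proposition~\ref{3.27} the induced map $s_F$ is a square root on $\frac{H}{F}$, which inherits multiplicativity from $s$, so by Proposition~\ref{5.12} its image $s_F\!\left(\frac{H}{F}\right)=\left\{\frac{s(x)}{F}:x\in H\right\}$ is a subalgebra of $\frac{H}{F}$. My plan is to restrict the canonical projection $\pi:H\to\frac{H}{F}$ to the subalgebra $s(H)$, obtaining a hoop homomorphism $\psi:s(H)\to\frac{H}{F}$, $\psi(a)=\frac{a}{F}$, and then read off the isomorphism from its image and kernel. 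Since every $a\in s(H)$ has the form $a=s(x)$, we have $\psi(a)=\frac{s(x)}{F}=s_F\!\left(\frac{x}{F}\right)$, so $\psi$ takes values in $s_F\!\left(\frac{H}{F}\right)$ and is in fact onto it.

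The heart of the argument is the computation of the kernel filter $\Ker\psi=\left\{a\in s(H):\psi(a)=\frac{1}{F}\right\}$. Using $1\rightarrow a=a$ (Theorem~\ref{1.4}(vi)) this is exactly $\{a\in s(H):a\in F\}=s(H)\cap F$, and I would show $s(H)\cap F=s(F)$. The inclusion $s(F)\subseteq s(H)\cap F$ is clear from Proposition~\ref{3.6}(i). For the reverse inclusion, take $a=s(x)\in F$; then condition (S1) gives $x=s(x)\odot s(x)=a\odot a$, and since $a\in F$ and $F$ is closed under $\odot$ (condition (F2)) we get $x\in F$, whence $a=s(x)\in s(F)$. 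Thus $\Ker\psi=s(F)$.

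With $\psi:s(H)\to s_F\!\left(\frac{H}{F}\right)$ a surjective hoop homomorphism and $\Ker\psi=s(F)$, the standard homomorphism theorem for hoops---available since hoops form a variety and filters correspond to congruences, as recalled in Section~2---yields
\[
\frac{s(H)}{s(F)}=\frac{s(H)}{\Ker\psi}\cong \im(\psi)=s_F\!\left(\frac{H}{F}\right),
\]
which is the assertion. Equivalently, one can avoid invoking the isomorphism theorem and instead define $\varphi:\frac{s(H)}{s(F)}\to s_F\!\left(\frac{H}{F}\right)$ directly by $\varphi\!\left(\frac{s(x)}{s(F)}\right)=\frac{s(x)}{F}$, checking that it is a well-defined bijective homomorphism; here preservation of $\odot$ uses multiplicativity and preservation of $\rightarrow$ uses Theorem~\ref{3.8}(vi), while well-definedness and injectivity both reduce, via Theorem~\ref{3.8}(vi) and the injectivity of $s$ (Theorem~\ref{3.80}), to the single equivalence $s(x\rightarrow y)\in s(F)\Leftrightarrow x\rightarrow y\in F\Leftrightarrow s(x\rightarrow y)\in F$. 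I expect the kernel identification $s(H)\cap F=s(F)$ (equivalently, the congruence-matching step in the direct route) to be the only real obstacle, since it is precisely where the square-root axiom (S1) and the filter axioms must be combined; all remaining verifications are routine.
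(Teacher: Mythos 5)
Your proposal is correct, and it reaches the conclusion by a genuinely different route than the paper. The paper writes down the isomorphism explicitly in the opposite direction, $f\colon s_{F}\bigl(\frac{H}{F}\bigr)\to\frac{s(H)}{s(F)}$, $f(s[x])=[s(x)]$, and checks well-definedness, injectivity, surjectivity and preservation of $\odot$ and $\rightarrow$ by hand, each step resting on the injectivity of $s$ (Theorem~\ref{3.80}), $s(F)\subseteq F$ (Proposition~\ref{3.6}) and $s(x\rightarrow y)=s(x)\rightarrow s(y)$ (Theorem~\ref{3.8}(vi)); your ``direct map'' variant $\varphi$ is essentially the inverse of this $f$, so that alternative coincides with the paper's argument. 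Your main route, however, is structural: restrict the canonical projection to $s(H)$, identify its image as $s_F\bigl(\frac{H}{F}\bigr)$, and concentrate all the substance in the kernel identity $s(H)\cap F=s(F)$, which you prove correctly ((S1) with (F2) for one inclusion, Proposition~\ref{3.6}(i) for the other) and which, notably, needs no multiplicativity at all --- the hypothesis $s(x\odot y)=s(x)\odot s(y)$ enters only to make $s(H)$ and $s_F\bigl(\frac{H}{F}\bigr)$ subalgebras. What this buys is modularity and a clear view of where each axiom is used; what it costs is an appeal to a homomorphism theorem for hoops with filters playing the role of kernels, which the paper never states and Section~2 only partially recalls (it gives the passage from filters to congruences, not the converse). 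The appeal is legitimate because hoop congruences are determined by their $1$-classes, but if you want to stay self-contained you can close the loop in one line: for $a,b\in s(H)$, $\psi(a)=\psi(b)$ iff $a\rightarrow b,\,b\rightarrow a\in F$ iff $a\rightarrow b,\,b\rightarrow a\in s(H)\cap F=s(F)$ (using that $s(H)$ is closed under $\rightarrow$), so $\frac{a}{s(F)}\mapsto\psi(a)$ is a well-defined bijective homomorphism onto $\mathrm{Im}(\psi)$. A minor point in your favor: your justification that $s_F\bigl(\frac{H}{F}\bigr)$ is a subalgebra (multiplicativity of $s_F$ inherited from $s$, then Proposition~\ref{5.12} applied to $\frac{H}{F}$) is more careful than the paper's, which cites Proposition~\ref{3.27} alone.
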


\begin{proof}
By Proposition \ref{5.12}, $s(H)$ is a subalgebra of $H$. Also according to Proposition \ref{5.13}, $s(F)$ is a filter of $s(H)$ and then  $\frac{s(H)}{s(F)}$ is a hoop algebra. In addition,  by  Proposition \ref{3.27} we know that  $s\left(\frac{H}{F}\right)$ is a subalgebra of the hoop algebra $\frac{H}{F}$. Now,   define the map 
\[
f:s_{F}\left(\frac{H}{F}\right) \rightarrow \frac{s(H)}{s(F)}
\]
by the rule $f(s[x])=[s(x)]$ for every $s[x]\in s\left(\frac{H}{F}\right)$. We show that $f$ is an isomorphism. Let $x, y\in H$ and $s[x]=s[y]$. According to Theorem \ref{3.80}, we have $[x]=[y]$. Therefore, $x \rightarrow y \in F$ and $y \rightarrow x \in F$. By Definition \ref{2.1} and Proposition~\ref{3.6}, we have $s(x \rightarrow y) \in F$ and $s(y \rightarrow x) \in F$. Now, by Theorem \ref{3.8}(vi), we conclude that $s(x) \rightarrow s(y) \in F$ and $s(y) \rightarrow s(x) \in F$. Thus, $[s(x)]=[s(y)]$. Thus $f$ is well-defined. It is clear that $f$ is  onto. To see it is one to one, let $f(s[x]) = f(s[y])$. Then $\frac{s(x)}{s(F)} = \frac{s(y)}{s(F)}$  which is equivalent to  $s(x)\rightarrow s(y), s(y)\rightarrow s(x) \in s(F)$. Now we get  $x \rightarrow y, y \rightarrow x \in F$. So $[x]=[y]$ and then $s[x] = s[y].$\\	
 Furthermore, we have
\begin{align*}
		f(s[x] \rightarrow s[y]) & = f(s[x \rightarrow y]) \\ & = [s(x \rightarrow y)] \\ & = [s(x) \rightarrow s(y)] \\ & = [s(x)] \rightarrow [s(y)] \\ & = f(s[x]) \rightarrow f(s[y]),
\end{align*}
and
\begin{align*}
f(s[x] \odot s[y]) & = f(s[x \odot y]) \\ & = [s(x \odot y)] \\ & = [s(x) \odot s(y)] \\ & = [s(x)] \odot [s(y)] \\ & = f(s[x]) \odot f(s[y]).
\end{align*}
Finally, $f(s[1]) = [s(1)] = [1].$ Therefore, $f$ is an isomorphism.
\end{proof}

\begin{thm}\label{5.1}
Let $H$ be a hoop  algebra and $s:H \rightarrow H$ be a square root on $H$. Then $$\langle x\rangle= \langle s(x)\rangle= \langle s(x)^2\rangle = \cdots .$$
for every $x\in H$.
\end{thm}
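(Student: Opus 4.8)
The plan is to reduce the entire chain of equalities to two ingredients: the defining identity $s(x)\odot s(x)=x$ of the square root, and the general principle that in any hoop algebra a principal filter is unchanged under passing to powers, i.e. $\langle a\rangle=\langle a^k\rangle$ for every $a\in H$ and every $k\in\mathbb N$. Once this principle is available the theorem is immediate: taking $a=s(x)$ gives $\langle s(x)\rangle=\langle s(x)^2\rangle=\langle s(x)^3\rangle=\cdots$, and since $s(x)^2=x$ by (S1), the term $\langle s(x)^2\rangle$ is literally $\langle x\rangle$, so the whole chain collapses to a single filter.

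First I would establish the auxiliary principle $\langle a\rangle=\langle a^k\rangle$ using the explicit description $\langle a\rangle=\{b\in H:\exists n\in\mathbb N,\ a^n\le b\}$ recorded in Section 2, which holds for arbitrary (not necessarily bounded) hoop algebras. The key elementary observation is that the powers of a fixed element form a decreasing chain $a\ge a^2\ge a^3\ge\cdots$, which follows from Theorem \ref{1.4}(iii) since $a^{n+1}=a^n\odot a\le a^n$. Consequently $a^{km}\le a^n$ whenever $km\ge n$. For the inclusion $\langle a^k\rangle\subseteq\langle a\rangle$, if $a^{km}\le b$ then $b\in\langle a\rangle$ by taking the power $n=km$; for the reverse inclusion, if $a^n\le b$ then choosing $m$ with $km\ge n$ yields $a^{km}\le a^n\le b$, so $b\in\langle a^k\rangle$. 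Hence $\langle a\rangle=\langle a^k\rangle$.

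Finally I would assemble the statement. Applying the principle to $a=s(x)$ gives $\langle s(x)\rangle=\langle s(x)^2\rangle=\langle s(x)^3\rangle=\cdots$, and invoking (S1) in the form $s(x)^2=s(x)\odot s(x)=x$ identifies the second term with $\langle x\rangle$. Stringing these together produces the desired equalities $\langle x\rangle=\langle s(x)\rangle=\langle s(x)^2\rangle=\cdots$ for every $x\in H$. I expect no genuine obstacle here; the only point requiring care is the cofinality argument for $\langle a\rangle=\langle a^k\rangle$, and even this is routine once the monotonicity of powers from Theorem \ref{1.4}(iii) is noted. (Alternatively, the inclusion $\langle s(x)\rangle\subseteq\langle x\rangle$ could be obtained directly from $x\le s(x)$ of Theorem \ref{3.3}(i), but the power argument has the advantage of settling all terms of the chain simultaneously.)
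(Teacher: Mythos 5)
Your proof is correct, and it takes a route that differs from the paper's in a meaningful way. The paper argues directly with the filter axioms: from $x\le s(x)$ (Theorem \ref{3.3}(i)) and upward closure (F3) it gets $s(x)\in\langle x\rangle$, hence $\langle s(x)\rangle\subseteq\langle x\rangle$; from closure under $\odot$ (F2) and $x=s(x)\odot s(x)$ it gets $x\in\langle s(x)\rangle$, hence $\langle x\rangle\subseteq\langle s(x)\rangle$; the rest of the chain is then declared a consequence. You instead first prove the square-root-free lemma $\langle a\rangle=\langle a^k\rangle$ for all $a\in H$, $k\in\mathbb{N}$, via the explicit description of principal filters and the monotonicity of powers from Theorem \ref{1.4}(iii), and then obtain the theorem by specializing to $a=s(x)$ and invoking only (S1). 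What your approach buys: the auxiliary lemma is of independent interest, it makes transparent that the theorem uses nothing about $s$ beyond $s(x)^2=x$, and it handles \emph{every} term $\langle s(x)^k\rangle$ of the displayed chain uniformly, which the paper's ``Consequently'' glosses over. What the paper's approach buys: it is shorter, reuses the already-established Theorem \ref{3.3}(i), and needs only the abstract filter axioms rather than the element-wise characterization of $\langle\cdot\rangle$. Both arguments are sound; your cofinality step ($a^{km}\le a^n\le b$ whenever $km\ge n$) is the only place requiring care, and you have it right.
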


\begin{proof}
By Theorem \ref{3.3}, we have $x\le s(x)$. Therefore, by Definition \ref{2.1}, $s(x) \in \langle x\rangle$. Hence, $\langle s(x)\rangle \subseteq \langle x\rangle$. 	
On the other hand, we know that $x=s(x)\odot s(x)\in \langle s(x)\rangle$. Then, $\langle x\rangle \subseteq \langle s(x)\rangle$. Therefore, $\langle x\rangle= \langle s(x)\rangle$. Consequently, $\langle x\rangle= \langle s(x)\rangle= \langle s(x)^2\rangle = \cdots.$
\end{proof}

\begin{thm}\label{5.4}
Let $H$ be a bounded $\vee$-hoop algebra, and $s:H \rightarrow H$ be a square root on $H$. If  $H$ is basic or   $s(x\odot y) = s(x) \odot s(y)$, for all $x,y\in H$,  then, a proper filter $F$ of $H$ is prime if and only if for every $x,y\in H$, $s(x)\vee s(y)\in F$, implies that $s(x)\in F$ or $s(y)\in F$.
\end{thm}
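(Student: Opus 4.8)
The plan is to prove the two implications separately, noting that the forward direction is essentially immediate while the content lies in the converse. For the \emph{only if} part, suppose $F$ is prime in the sense of Definition~\ref{5.3}. Since $s(x),s(y)\in H$ for all $x,y\in H$, applying the defining property of a prime filter to the pair $s(x),s(y)$ gives at once that $s(x)\vee s(y)\in F$ implies $s(x)\in F$ or $s(y)\in F$. No further work is needed in this direction.

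For the \emph{if} direction, assume the stated square-root condition and let $a,b\in H$ with $a\vee b\in F$; I must produce $a\in F$ or $b\in F$. The key step is to lift $a\vee b$ to an element of the form $s(a)\vee s(b)$ lying in $F$. Under either hypothesis one has the identity $s(a\vee b)=s(a)\vee s(b)$: when $H$ is basic this is exactly Theorem~\ref{3.14}(iii), and when $s(x\odot y)=s(x)\odot s(y)$ for all $x,y$ it follows by expanding $a\vee b=((a\rightarrow b)\rightarrow b)\wedge((b\rightarrow a)\rightarrow a)$ and applying $s(u\wedge v)=s(u)\wedge s(v)$ (Proposition~\ref{3.10}(iii)) together with $s(u\rightarrow v)=s(u)\rightarrow s(v)$ (Theorem~\ref{3.8}(vi)). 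Since $a\vee b\le s(a\vee b)$ by Theorem~\ref{3.3}(i) and $F$ is upward closed by condition (F3) of Definition~\ref{2.1}, I obtain $s(a)\vee s(b)=s(a\vee b)\in F$.

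By the assumed property of $F$ it follows that $s(a)\in F$ or $s(b)\in F$. In the first case, closure of the filter under $\odot$ (condition (F2) of Definition~\ref{2.1}) gives $a=s(a)\odot s(a)\in F$, and symmetrically $s(b)\in F$ yields $b\in F$. Hence $a\in F$ or $b\in F$, so $F$ is prime. I expect the only genuinely delicate point to be the derivation of the identity $s(a\vee b)=s(a)\vee s(b)$ in the multiplicative case, where one must keep careful track of how $s$ interacts with both $\wedge$ and $\rightarrow$; the basic case is immediate from Theorem~\ref{3.14}(iii). It is worth remarking that this identity can in fact be bypassed: since $\vee$ is a join on the $\vee$-hoop $H$ (Proposition~\ref{1.6}), monotonicity alone gives $a\vee b\le s(a)\vee s(b)$ directly from $a\le s(a)$ and $b\le s(b)$, so that $s(a)\vee s(b)\in F$ follows from upward closure, and the remaining harvesting of $a$ and $b$ via $a=s(a)\odot s(a)$ proceeds as above.
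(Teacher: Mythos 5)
Your proof is correct, and your main line of argument is essentially the paper's: the forward direction is immediate from Definition~\ref{5.3} applied to the pair $s(x),s(y)$, and the converse lifts $a\vee b\in F$ to $s(a)\vee s(b)\in F$ via $a\vee b\le s(a\vee b)$ (Theorem~\ref{3.3}(i) plus (F3)) and the identity $s(a\vee b)=s(a)\vee s(b)$, then recovers $a=s(a)\odot s(a)\in F$ by (F2). The paper handles the basic case in exactly this way, citing Theorem~\ref{3.14}(iii), and disposes of the multiplicative case with a one-line appeal to Proposition~\ref{3.10}(iii); your expansion of $a\vee b=((a\rightarrow b)\rightarrow b)\wedge((b\rightarrow a)\rightarrow a)$ using preservation of $\wedge$ and $\rightarrow$ simply fills in what that terse citation leaves implicit, and it is the intended argument. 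The genuinely different contribution is your closing remark: since $\vee$ is a join on $H$, the inequality $a\vee b\le s(a)\vee s(b)$ already follows from $a\le s(a)$, $b\le s(b)$ and monotonicity of joins, so upward closure gives $s(a)\vee s(b)\in F$ with no preservation identity at all. That shorter argument never invokes the hypothesis that $H$ is basic or that $s$ is multiplicative, so it establishes the equivalence for every bounded $\vee$-hoop algebra with a square root; the extra hypothesis in the theorem is needed only by the route through $s(a\vee b)=s(a)\vee s(b)$, not by the statement itself. It would be worth recording this strengthening explicitly.
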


\begin{proof}
Let us first assume that $H$ is a basic hoop.	If $F$ is a prime filter, then clearly the implication holds. Now, assume that $F$ is a proper filter and $x\vee y \in F$. Then $s(x\vee y) \in F$. Since $H$ is basic, we have $s(x)\vee s(y) \in F$. By the assumption, $s(x)\in F$ or $s(y)\in F$. Therefore, by the condition $F2$ in Definition \ref{2.1}, $x\in F$ or $y\in F$. Thus, $F$ is prime.

Let us now assume that $s(x\odot y) = s(x) \odot s(y)$, for all $x,y\in H$. According to Proposition~\ref{3.10}(iii), the assertion follows. 
\end{proof}

\begin{prop}\label{5.6}
Let $H$ be a bounded hoop  algebra, $s:H \rightarrow H$  a square root on $H$, and $F$ be a proper filter of $H$. Then, for every $n\in \mathbb{N}$ and $x\in H$ such that $x\leq s(0)$ we have
	\begin{itemize}
		\item[{\rm (i)}]  $s(x)^n\notin F$.
		\item[{\rm (ii)}] $s^n(x)\notin F$ with $s^n(x) = \underbrace{s(\cdots (s(x))).}_{n-{\rm times}}$
	\end{itemize}
\end{prop}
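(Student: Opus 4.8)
The plan is to reduce both parts to a single key observation about the element $s(0)$ together with the upward closure of filters. First I would establish that $s(0)\notin F$: indeed, by (S1) we have $s(0)\odot s(0)=0$, so if $s(0)$ belonged to $F$ then by (F2) the product $0=s(0)\odot s(0)$ would lie in $F$, contradicting the fact that a proper filter of a bounded hoop algebra cannot contain $0$. Since the hypothesis gives $x\le s(0)$, the contrapositive of (F3) immediately yields $x\notin F$ as well. This single fact, namely $x\notin F$, is what the remaining arguments will contradict.

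For part (i) I would argue by contradiction. Suppose $s(x)^n\in F$ for some $n\in\mathbb{N}$. Closing $F$ under $\odot$ (condition (F2)) gives $s(x)^{2n}=s(x)^n\odot s(x)^n\in F$, and by Corollary~\ref{3.9}(i) we have $s(x)^{2n}=x^n$, so $x^n\in F$. Because $x^n\le x$ by Theorem~\ref{1.4}(iii), the upward closure (F3) forces $x\in F$, contradicting $x\notin F$. Hence $s(x)^n\notin F$ for every $n$. The only care needed here is that this handles $n=1$ uniformly, which it does since $x^1=x$.

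For part (ii) I would use the same contradiction set-up but descend through the iterates of $s$. Assume $s^n(x)\in F$. Writing $y=s^{n-1}(x)$, condition (S1) gives $(s^n(x))^2=s(y)\odot s(y)=y=s^{n-1}(x)$, and (F2) then places $s^{n-1}(x)$ in $F$. Iterating this squaring step walks the membership down the chain $s^n(x),\,s^{n-1}(x),\,\dots,\,s(x),\,s^0(x)=x$, so that $x\in F$, again contradicting $x\notin F$. Thus $s^n(x)\notin F$ for every $n$.

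The genuinely load-bearing step is the very first one, $s(0)\notin F$; once that is in hand, both statements are mechanical consequences of (F2) and (F3). The only bookkeeping obstacle is making sure the index reductions (the identity $s(x)^{2n}=x^n$ in part (i) and the length-$n$ descent of iterates in part (ii)) terminate exactly at $x$ rather than at some intermediate power or iterate, which is why I would keep the convention $s^0(x)=x$ explicit throughout.
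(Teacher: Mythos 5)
Your proof is correct, and at bottom it follows the same strategy as the paper: argue by contradiction, combining (S1) with the filter axioms (F2), (F3) and the fact that a proper filter of a bounded hoop cannot contain $0$. The execution, however, differs in a way that favours your version. For (i) the paper writes $s(x)^n = x\odot s(x)^{n-2}\le 0\odot s(0)^{n-2}=0$ and concludes $0\in F$; read termwise that inequality is not justified, since the hypothesis gives $x\le s(0)$ rather than $x\le 0$, and the bound on the other factor runs the wrong way (one has $s(0)\le s(x)$, not $s(x)\le s(0)$). What is actually true is $s(x)^4=x^2\le s(0)\odot s(0)=0$, so the paper's conclusion that $s(x)^n$ itself vanishes holds only for $n\ge 4$ (e.g.\ in the {\L}ukasiewicz algebra with $x=s(0)=0.5$ one gets $s(x)^3=0.25\neq 0$), and closure of $F$ under products is needed to treat small $n$. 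Your route --- from $s(x)^n\in F$ to $s(x)^{2n}=x^n\in F$ via (F2) and Corollary~\ref{3.9}(i), then to $x\in F$ via (F3) because $x^n\le x$, contradicting the upfront observation that $x\notin F$ (which follows from $s(0)\notin F$ and upward closure) --- is uniform in $n$ and avoids the problem entirely. For (ii) the paper offers no argument beyond ``this is clear,'' while your squaring descent $s^n(x)\in F\Rightarrow s^{n-1}(x)\in F\Rightarrow\cdots\Rightarrow x\in F$ is exactly the argument that needs to be made. In short: same underlying idea, but your bookkeeping is sounder than the paper's own, and you supply the proof of (ii) that the paper omits.
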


\begin{proof}
(i) Let $x\in H$ such that $x\leq s(0)$, and suppose there exists $n\in \mathbb{N}$ such that $s(x)^n\in F$. Now, since $F$ is a filter and $s(x)^n = x\odot s(x)^{n-2}\le 0 \odot s(0)^{n-2}= 0$ , we have $0 \in F$, which contradicts the properness of $F$. Hence, for every $n\in \mathbb{N}$ and $x\in H$ such that $x\leq s(0)$, $s(x)^n\notin F$.\\	
(ii) This is clear.
\end{proof}

Let $H$ be a bounded hoop  algebra and $s:H \rightarrow H$ be a square root on $H$. For every $x\in H$, the set $S_x$ associated with $x$ is defined by
\[
S_x = \{s^n(x) : n\in \mathbb{N}\} = \{x,s(x),s^2(x),\cdots \}.
\]

\begin{thm}\label{5.9}
Let $H$ be a bounded hoop  algebra and $s:H \rightarrow H$ be a square root on $H$. Put	$S = \bigcup_{x\le s(0)} S_x$. Then, either $P=H-S$ is a maximal filter of $H$ or $\langle P \rangle = H$.
\end{thm}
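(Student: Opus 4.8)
The plan is to analyse the set $S$, show it consists only of nilpotent elements, and then split according to whether $\langle P\rangle = H$. First I would record three facts about $S$. Since $s(0)\odot s(0)=0$ by (S1) and $x\le s(0)$ implies $x\odot x\le s(0)\odot s(0)=0$ by Theorem~\ref{1.4}(xiii), every $x\le s(0)$ lies in $N(H)$; applying Corollary~\ref{3.22} repeatedly then gives $s^{n}(x)\in N(H)$ for all $n$, so that $S\subseteq N(H)$. Next, $0\in S$ (take $x=0$), while $1\notin S$ unless $H$ is trivial, because $s^{n}(x)=1$ forces $x=1$ by Corollary~\ref{3.5}, and $1\le s(0)$ would give $s(0)=1$ and hence $0=s(0)\odot s(0)=1$. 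Consequently $1\in P$ and $P\neq\emptyset$, so $\langle P\rangle$ is defined.

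If $\langle P\rangle=H$ there is nothing to prove, so I would assume $\langle P\rangle\neq H$ and aim to show that $P$ is a maximal filter. The key leverage of this assumption is that $0\notin\langle P\rangle$; since a filter is closed under $\odot$, this means $\langle P\rangle$ contains no element having a zero power, i.e. $\langle P\rangle$ (and therefore $P$) contains no nilpotent element.

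With this in hand I would verify the three filter axioms of Definition~\ref{2.1} for $P$. Condition (F1) holds because $1\in P$. For upward closure (F3), if $a\in P$ and $a\le b$ but $b\in S\subseteq N(H)$, then $a\le b$ forces $a\in N(H)$ (powers respect $\le$ by Theorem~\ref{1.4}(xiii)), so some $a^{m}=0\in\langle P\rangle$, contradicting $\langle P\rangle\neq H$; thus $b\in P$. For multiplicative closure (F2), given $a,b\in P\subseteq\langle P\rangle$ we have $a\odot b\in\langle P\rangle$; were $a\odot b\in S\subseteq N(H)$, some power $(a\odot b)^{m}=0$ would again lie in $\langle P\rangle$, a contradiction, so $a\odot b\in P$. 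Finally $0\in S$ gives $0\notin P$, so $P$ is a proper filter.

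It then remains to prove maximality. If $Q$ is any filter with $P\subsetneq Q$, choose $q\in Q\setminus P$; then $q\in S\subseteq N(H)$, so $q^{m}=0$ for some $m$, and closure of $Q$ under $\odot$ forces $0\in Q$, i.e. $Q=H$. Hence $P$ is maximal. I expect the main obstacle to be condition (F2): the product of two non-nilpotent elements can be nilpotent, so multiplicative closure of $P$ is false in general and holds only because the standing assumption $\langle P\rangle\neq H$ rules out a zero product among members of $P$. This is precisely the point at which the dichotomy of the statement is forced.
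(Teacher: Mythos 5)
Your proof is correct. It rests on the same key fact as the paper's proof --- every element of $S$ is nilpotent, so any filter that meets $S$ contains $0$ and hence is all of $H$ --- but the organization is genuinely different. You obtain $S\subseteq N(H)$ from $s(0)\odot s(0)=0$ together with Corollary~\ref{3.22}; the paper instead notes that each $x\in S$ satisfies $x\le s^{k+1}(0)$ and uses Theorem~\ref{5.1} (so that $\langle s^{k+1}(0)\rangle=\langle 0\rangle=H$) to conclude $x^m=0$ for some $m$. More substantively, the paper splits on whether $P$ is a filter: if it is, maximality follows exactly as in your last paragraph; if it is not, any element of $\langle P\rangle\setminus P$ lies in $S$, hence is nilpotent, forcing $\langle P\rangle=H$. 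You split instead on whether $\langle P\rangle=H$, and when $\langle P\rangle\ne H$ you must verify (F1)--(F3) for $P$ from scratch, which you do correctly via the observation that a proper filter contains no nilpotent element. The two case splits are contrapositives of one another (``$P$ not a filter $\Rightarrow\langle P\rangle=H$'' versus ``$\langle P\rangle\ne H\Rightarrow P$ is a filter''), so they prove the same disjunction; your version pays for the explicit (F2)/(F3) verification that the paper gets for free as a case hypothesis, but in return it isolates the sharper implication that properness of $\langle P\rangle$ alone already forces $P$ to be a maximal filter. Your closing remark that (F2) is the real crux is apt: in, say, a product of two {\L}ukasiewicz algebras with the coordinatewise square root, $P$ genuinely fails to be closed under $\odot$, so the standing assumption $\langle P\rangle\neq H$ is indispensable at exactly that step.
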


\begin{proof}
We distinguish two cases.\\
1) $P$ is a filter of $H$.	If $P$ is not maximal, then there exist a proper filter $F$ of $H$ such that $P\subsetneq F$ and hence choose $x\in F-P$. As $x\in S$, there exist $y\in H$ and $k\in \mathbb{N}$ such that $y\le s(0)$ and $s^k(y)=x$. Thus, we have $x \le s^{k+1}(0)$. By Theorem~\ref{5.1}, we conclude that there is a positive integer $m$ for which $x^m = 0$, so $0 \in \langle x \rangle$ and then $\langle x \rangle = H.$ As $x\in F$, $\langle x \rangle\subseteq F$. Therefore, $F = H$, a contradiction. Therefor,  $P$ is a maximal filter and we are done. \\
2) The subset $P$ of $H$ is not a filter of $H$. In this case, $P\ne \langle P \rangle$ and we replace the filter $F$, as in (1), by   $\langle P \rangle$ and repeat the proof of last part to obtain  $\langle P \rangle = H$, as required.

\end{proof}
Let $H$ and $G$ be two hoop  algebras, and $f:H \rightarrow G$ be a hoop  homomorphism. Suppose $s:H \rightarrow H$ and $t:G \rightarrow G$ are square roots on $H$ and $G$, respectively. One has $f(s(x))\odot f(s(x))= f(x)$, for every $x\in H$.  According to condition $S2$, it is clear that $f(s(x))\le t(f(x))$.
\begin{defn}\label{4.6}
With the above notations, we say that $f$ \textit{preserves square roots} if $f(s(x))= t(f(x))$, for every $x\in H$. This means that the diagram 
\[\label{pulb. of alphaE}
	\SelectTips{cm}{}\xymatrix{ H \ar[d]_{s} \ar[r]^{f}&G \ar[d]^{t} \\
 H \ar[r]_{f} & G}
\]
is commutative. 
\end{defn}

\begin{thm}\label{4.7}
Let  $f:H \rightarrow G$ be a hoop  homomorphism and  $s:H \rightarrow H$ and $t:G \rightarrow G$ are square roots on $H$ and $G$, respectively. The  following statements are true.
\begin{itemize}
	\item[{\rm (i)}] $f$ preserves square roots if and only if the image of $f$, $Im(f)$, is closed under $t$.
	\item[{\rm (ii)}]  If $f$ is an isomorphism, then $t=f\circ s\circ f^{-1}$.
	\item[{\rm (iii)}] If $t(f(x))$ is $\wedge$-irreducible and $f$ is a one to one which preserves square roots, then $s(x)$ is $\wedge$-irreducible.
\item[{\rm (iv)}] If $H$ and $G$ are two $\vee$-hoop  algebras and $t(f(x))$ is $\vee$-irreducible and $f$ is  one to one which preserves square roots, then $s(x)$ is $\vee$-irreducible.
	
	Let $f$ preserves square roots. Then one has 
	\item[{\rm (v)}] 	If $s(x)\in N(H)$, then $t(f(x))\in N(G).$
    \item[{\rm (vi)}]	 If $s(x)\in D(H)$, then $t(f(x))\in D(G).$
	\item[{\rm (vii)}] If $s(x) \in \reg(H)$, then $t(f(x)) \in \reg(G).$
	
	If $f$ is a one to one, then the converse of items (v), (vi) and (vii) are true, as well.
\end{itemize}	

\end{thm}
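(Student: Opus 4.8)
The linchpin of the whole statement is part (i): once we know that $f$ preserves square roots, the defining identity $f(s(x)) = t(f(x))$ of Definition~\ref{4.6} lets us replace every occurrence of $t(f(x))$ by $f(s(x))$, and then each remaining claim reduces to a routine transport of an algebraic property across the homomorphism $f$. Throughout I will lean on the standing inequality $f(s(x)) \le t(f(x))$ recorded just before Definition~\ref{4.6}, which follows from $f(s(x)) \odot f(s(x)) = f(x)$ together with $(S2)$ for $t$.

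\textbf{Part (i).} The forward implication is immediate: if $f(s(x)) = t(f(x))$ for all $x$, then for any $y = f(x) \in \mathrm{Im}(f)$ we have $t(y) = f(s(x)) \in \mathrm{Im}(f)$. The reverse implication is the main obstacle, and the plan is to route it through uniqueness of square roots. Assuming $\mathrm{Im}(f)$ is closed under $t$, the restriction $t|_{\mathrm{Im}(f)}$ is a square root on the subalgebra $\mathrm{Im}(f)$, since $(S1),(S2)$ are inherited and closedness keeps the values inside. I would then identify $\mathrm{Im}(f)$ with the quotient $H/F$, where $F = \{x \in H : f(x)=1\}$ is the kernel filter and $[x]\mapsto f(x)$ is the induced isomorphism (indeed $x\equiv_F y$ iff $f(x)=f(y)$). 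By Proposition~\ref{3.27} the square root $s$ descends to the square root $[x]\mapsto [s(x)]$ on $H/F$, while transporting $t|_{\mathrm{Im}(f)}$ back along the isomorphism also produces a square root on $H/F$. By the uniqueness of square roots (Theorem~\ref{3.31}) these two coincide, and evaluating at the class of $x$ gives exactly $f(s(x)) = t(f(x))$. The one delicate point is the bookkeeping through the isomorphism (and checking that the quotient construction of Proposition~\ref{3.27} needs nothing beyond what is available); everything else is formal. A tempting direct argument — take $t(f(x))=f(a)$ and try to deduce $a\le s(x)$ — fails because $f(a^2)=f(x)$ does \emph{not} give $a^2\le x$ when $f$ is not injective, which is precisely why the uniqueness route is needed.

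\textbf{Parts (ii)--(iv).} For (ii), an isomorphism has $\mathrm{Im}(f)=G$, hence preserves square roots by (i); writing $x = f^{-1}(y)$ gives $t(y) = t(f(x)) = f(s(x)) = (f\circ s\circ f^{-1})(y)$. For (iii) and (iv) I would first rewrite $t(f(x)) = f(s(x))$, so the hypothesis becomes that $f(s(x))$ is $\wedge$- (resp. $\vee$-) irreducible. Note $f$ preserves meets because $a\wedge b = a\odot(a\rightarrow b)$ by Theorem~\ref{1.4}(i), and in $\vee$-hoops it preserves joins because $\vee$ is the term $((a\rightarrow b)\rightarrow b)\wedge((b\rightarrow a)\rightarrow a)$ of Proposition~\ref{1.6}. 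Thus if $s(x) = a\wedge b$ then $f(s(x)) = f(a)\wedge f(b)$, irreducibility forces $f(s(x)) = f(a)$ or $f(s(x)) = f(b)$, and injectivity of $f$ yields $s(x)=a$ or $s(x)=b$; so $s(x)$ is $\wedge$-irreducible. Part (iv) is identical with $\wedge$ replaced by $\vee$.

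\textbf{Parts (v)--(vii) and their converses.} Here $H,G$ are bounded, so $f(0)=0$ and, since $u'=u\rightarrow 0$, also $f(u')=f(u)'$. Using $t(f(x)) = f(s(x))$ throughout: for (v), $s(x)^n = 0$ gives $t(f(x))^n = f(s(x)^n) = f(0) = 0$; for (vi), $s(x)'=0$ gives $t(f(x))' = f(s(x))' = f(s(x)') = f(0)=0$; for (vii), $s(x)''=s(x)$ gives $t(f(x))'' = f(s(x)'') = f(s(x)) = t(f(x))$. For the converses, assuming $f$ injective, each computation is reversible: from $f(s(x)^n)=0=f(0)$, $f(s(x)')=0=f(0)$, or $f(s(x)'')=f(s(x))$ one cancels $f$ to recover $s(x)^n=0$, $s(x)'=0$, or $s(x)''=s(x)$. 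I expect no difficulty in this block; the only thing to keep straight is that $f$ commutes with $'$ and with powers, which is immediate from it being a homomorphism with $f(0)=0$.
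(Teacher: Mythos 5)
Your proof is correct, and for parts (iii)--(vii) it coincides with the paper's own argument: rewrite $t(f(x))=f(s(x))$, use that $f$ preserves $\wedge$ (via $a\wedge b=a\odot(a\rightarrow b)$) and, in $\vee$-hoops, $\vee$ (a term in $\odot,\rightarrow$), then invoke irreducibility and injectivity; for (v)--(vii) push the identities $s(x)^n=0$, $s(x)'=0$, $s(x)''=s(x)$ through $f$, and reverse by injectivity for the converses. The genuine difference is in parts (i)--(ii): the paper does not prove them at all, deferring to \cite[Theorem 3.14]{DvZa1} (a result stated for pseudo MV-algebras), whereas you give a self-contained argument built only from the paper's own toolkit: the kernel filter $F=\{x\in H: f(x)=1\}$, the isomorphism $\frac{H}{F}\cong \mathrm{Im}(f)$ given by $[x]\mapsto f(x)$, the quotient square root of Proposition~\ref{3.27}, and uniqueness of square roots (Theorem~\ref{3.31}). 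This buys completeness -- no appeal to an external theorem about a different class of algebras -- at the cost of the first-isomorphism bookkeeping, which does go through: $x\equiv_F y$ iff $f(x)=f(y)$; the restriction $t|_{\mathrm{Im}(f)}$ is a square root on the subalgebra $\mathrm{Im}(f)$ precisely because of closedness; and its transport to $\frac{H}{F}$ must equal $[x]\mapsto[s(x)]$ by uniqueness, which unwinds to $f(s(x))=t(f(x))$. Your observation that the naive direct argument (take $t(f(x))=f(a)$ and try to get $a\le s(x)$) fails without injectivity is exactly right, and it is why some detour of this kind is necessary. Two minor points worth aligning with the paper: Proposition~\ref{3.27} is stated for bounded hoops, which is harmless here since the paper's notion of hoop homomorphism (and the sets $N(H)$, $D(H)$, $\reg(H)$ in (v)--(vii)) already presuppose boundedness; and the paper's proof of (iii) opens with ``suppose $s(x)\neq 0$'', a nonzero convention for irreducibility that your write-up should mirror.
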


\begin{proof}
The proof of  (i) and (ii) are similar to the proof of {\rm \cite[Theorem 3.14]{DvZa1}}. \\
(iii) Suppose $s(x) \ne 0$ and $a,b \in H$ such that $s(x) =a \wedge b$. Now we have $t(f(x))=f(s(x))= f(a\wedge b)= f(a)\wedge f(b)$. Since $t(f(x))$ is $\wedge$-irreducible, it follows that $f(s(x))=f(a)$ or $f(s(x))=f(b)$. Due to the one to one of $f$, we have either $s(x)=a$ or $s(x)=b$. Therefore, $s(x)$ is also $\wedge$-irreducible.\\	
(iv) The proof is similar to (iii).\\	
(v) We have the following implications
%\marginpar{\fbox{New}}
\[ 
s(x) \in N(H) \Longrightarrow s(x)^n =0 \Longrightarrow f(s(x)^n) =0 \Longrightarrow t(f(x))^n =0 \Longrightarrow t(f(x)) \in N(G).
\]
(vi) and (vii) are straightforward. Moreover, if $f$ is  one to one, then it can be easily checked that the converse of items (v), (vi) and (vii) hold.
\end{proof}

\begin{cor}\label{4.11}
Let $H$ and $G$ be two $\vee$-hoop algebras, and let $f:H\rightarrow G$ be a hoop  homomorphism. Suppose $s:H \rightarrow H$ and $t:G \rightarrow G$ are square roots on $H$ and $G$, respectively. If $G$ is a basic hoop  algebra and $f$ preserves square roots, then for every $x,y\in H$, $f(s(x\wedge y))= f(s(x)) \wedge f(s(y))$ and $f(s(x\vee y))= f(s(x)) \vee f(s(y))$.	
\end{cor}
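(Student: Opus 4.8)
The plan is to exploit the hypothesis that $f$ preserves square roots, that is $f(s(z)) = t(f(z))$ for every $z \in H$, in order to transport each of the two identities from the operation $s$ on $H$ to the operation $t$ on the \emph{basic} hoop $G$, where the compatibility of the square root with $\wedge$ and $\vee$ is already available from Theorems~\ref{3.12} and \ref{3.14}. In effect, the square-root-preservation condition lets me replace every occurrence of $f\circ s$ by $t\circ f$ and then invoke the good behaviour of $t$ on $G$.

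The one preliminary fact I would record is that a hoop homomorphism automatically carries both $\wedge$ and $\vee$ across. Indeed, since $x\wedge y = x\odot(x\rightarrow y)$ by Theorem~\ref{1.4}(i) and $f$ respects $\odot$ and $\rightarrow$, we obtain $f(x\wedge y) = f(x)\wedge f(y)$; and because the join in a $\vee$-hoop is the term $x\vee y = ((x\rightarrow y)\rightarrow y)\wedge((y\rightarrow x)\rightarrow x)$ built solely from $\rightarrow$ and $\wedge$ (Proposition~\ref{1.6}), the same reasoning yields $f(x\vee y)=f(x)\vee f(y)$.

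With this in hand the meet identity follows by the chain
\begin{align*}
f(s(x\wedge y)) &= t(f(x\wedge y)) = t(f(x)\wedge f(y)) \\
&= t(f(x))\wedge t(f(y)) = f(s(x))\wedge f(s(y)),
\end{align*}
in which the first and last equalities use that $f$ preserves square roots, the second uses that $f$ preserves $\wedge$, and the third is Theorem~\ref{3.12}(i) applied to the square root $t$ on the basic hoop $G$. The join identity is established by the identical pattern, replacing Theorem~\ref{3.12}(i) with the join-preservation of $t$ supplied by Theorem~\ref{3.14}(iii) (valid since $G$ is basic):
\begin{align*}
f(s(x\vee y)) &= t(f(x\vee y)) = t(f(x)\vee f(y)) \\
&= t(f(x))\vee t(f(y)) = f(s(x))\vee f(s(y)).
\end{align*}

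The only genuine step is the observation that $f$ commutes with $\wedge$ and $\vee$; after that, both equalities reduce to a direct substitution of the square-root-preservation identity together with the behaviour of $t$ on the basic hoop $G$, so I anticipate no real obstacle beyond this routine bookkeeping. One caveat worth flagging is that Theorem~\ref{3.14}(iii) is stated for \emph{bounded} $\vee$-hoops; if one prefers not to assume $G$ bounded, the join step can instead be re-derived directly from $t((a\rightarrow b)\rightarrow b) = (t(a)\rightarrow t(b))\rightarrow t(b)$ by means of Theorems~\ref{3.8}(vi) and \ref{3.12}(i), which require only that $G$ be basic.
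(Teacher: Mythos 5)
Your proof is correct and is essentially the argument the paper intends: its one-line proof cites Theorems~\ref{3.12}(i) and \ref{3.14}(iii), which (since the hypothesis places basicness on $G$) must be applied to $t$ on $G$ after using square-root preservation $f\circ s = t\circ f$ and the fact that a hoop homomorphism commutes with $\wedge$ and $\vee$ (both being terms in $\odot$ and $\rightarrow$) --- precisely the steps you spell out. Your caveat that Theorem~\ref{3.14}(iii) is stated only for \emph{bounded} $\vee$-hoops, together with your workaround via Theorems~\ref{3.8}(vi) and \ref{3.12}(i), is a legitimate refinement of a detail the paper glosses over.
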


\begin{proof}
The result follows from the application of Theorems \ref{3.12}(i) and \ref{3.14}(iii).
\end{proof}

\begin{defn}\label{4.1}
Let $H$ be a bounded hoop  algebra and $s: H \rightarrow H$ be a square root on $H$. If $s(0) = 0$, then $H$ is called {\em good}. 
\end{defn}	
As shown in Proposition \ref{3.10}, if for all $x, y \in H$, we have $s(x \odot y) = s(x) \odot s(y)$, or $s(x^2) = x$, then $H$ is good.

\begin{cor}\label{4.8}
Let $H$ and $G$ be two hoop  algebras, and $f:H\rightarrow G$ be a hoop  homomorphism. Suppose $s:H \rightarrow H$ and $t:G \rightarrow G$ are square roots on $H$ and $G$, respectively. If $H$ is good and $f$ be preserves square roots, then $G$ is also good.
\end{cor}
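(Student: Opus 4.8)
The plan is to reduce the statement to a single evaluation of the square-root-preservation identity at the bottom element. Recall that by Definition~\ref{4.1}, declaring a bounded hoop algebra good means precisely that its square root fixes $0$; so the hypothesis ``$H$ is good'' reads $s(0_H)=0_H$, while the desired conclusion ``$G$ is good'' reads $t(0_G)=0_G$. Thus the whole task is to verify this one equality for $t$.

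First I would record the two structural ingredients available. Since $f$ is a hoop homomorphism between bounded hoop algebras, by the definition of hoop homomorphism recalled just before Proposition~\ref{1.15} it preserves the least element, i.e. $f(0_H)=0_G$. Second, since $f$ preserves square roots in the sense of Definition~\ref{4.6}, the identity $f(s(x))=t(f(x))$ holds for every $x\in H$, and in particular at the point $x=0_H$.

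Then the argument is a direct substitution. Evaluating the preservation identity at $x=0_H$ gives $f(s(0_H))=t(f(0_H))$. On the left, goodness of $H$ gives $s(0_H)=0_H$, hence $f(s(0_H))=f(0_H)=0_G$; on the right, $f(0_H)=0_G$ gives $t(f(0_H))=t(0_G)$. Equating the two sides yields $t(0_G)=0_G$, which is exactly the assertion that $G$ is good. I do not anticipate any genuine obstacle: the content is a one-point chase of the commuting square of Definition~\ref{4.6}, using only that hoop homomorphisms fix $0$. The two points worth stating explicitly are that ``good'' presupposes the bounded setting (so that $0_H$ and $0_G$ are available) and that no surjectivity of $f$ is needed, since the conclusion concerns only the behaviour of $t$ at the particular element $0_G=f(0_H)$.
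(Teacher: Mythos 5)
Your proof is correct and is exactly the intended argument: the paper states this corollary without proof (treating it as immediate from Definitions~\ref{4.1} and \ref{4.6}), and your one-point chase $t(0_G)=t(f(0_H))=f(s(0_H))=f(0_H)=0_G$ is precisely the computation the paper itself uses for the analogous strictness result, Proposition~\ref{4.24}. Your two side remarks (that goodness presupposes boundedness so that $0$ exists, and that no surjectivity of $f$ is needed) are accurate and worth keeping.
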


\begin{exm}\label{4.42}
All the hoop algebras defined in Examples~\ref{3.2}, are good.
\end{exm}

\begin{exm}\label{4.43}
a) Let $I=[0,1]$ be the unit closed interval of real numbers and $p\in \mathbb{N}$. Define the operations $\odot$ and $\rightarrow$ as $a \odot b = (\max(0, a^p + b^p - 1))^{\frac{1}{p}}, a \rightarrow b = \min(1, (1 - a^p + b^p)^{\frac{1}{p}}),$ for all  $a,b\in I$. Then, $\mathbf{L}_p = (I; \max, \min, \odot, \rightarrow, 0, 1)$ is a residuated Lattice, which is called a {\L}ukasiewicz generalized structure (see also~\cite{DPi,DBDPAJ}).
Assuming $p=1$, the resulting structure is called a {\L}ukasiewicz algebra, which is a hoop  algebra. In this  {\L}ukasiewicz algebra, we have $a \odot b = \max(0, a + b - 1), a \rightarrow b = \min(1, 1 - a + b).$ Define the unitary operation $s: I \rightarrow I$ by $s(x) = \frac{x+1}{2},$ for every $x\in I$, it is clear that $s(x) \in I$. Now,
\begin{align*} 
s(x) \odot s(x) &= \max(0, s(x) + s(x) - 1) \\
&= \max(0, x) = x
\end{align*}
Thus, the condition S1 in Definition \ref{3.1} holds. It can easily be shown that the condition S2 is also satisfied. For, suppose $x, y \in I$ such that $y \odot y \le x$. In this case, we have $y \odot y = \max(0, y + y - 1) \le x$. If $y \odot y = 0 \le x$, then according to  Theorem \ref{3.8}(v), $s(y) \odot s(y) \le s(y \odot y) \le s(x)$, implying $y \le s(x)$. If $y \odot y = y + y - 1 \le x$, then $2y - 1 \le x$, hence $y \le \frac{x+1}{2} = s(x)$. 
However, we have $s(0) = 0.5$. Therefore, the {\L}ukasiewicz algebra is not good. Finally, $s(x) \odot s(y) = s(x \odot y)$ does not hold.\\
b) As shown in Example \ref{1.51}(c), $\Gamma(G,u) =  ([0,u];\odot,\rightarrow,u)$ is a bounded hoop  algebra with the  operations $x\odot y = (x+y-u)\vee 0,  x\rightarrow y = (y-x+u)\wedge u.$ Let $G$ be a 2-divisible Abelian group. In this algebra, we define the square root as follows:
\[ 
s:\Gamma(G,u) \rightarrow \Gamma(G,u), \quad s(x)= \frac{x+u}{2}
\]
for every $x\in I$. It is clear that $s(x)\in \Gamma(G,u)$. Now, we have $s(x)\odot s(x) = (\frac{x+u}{2}+\frac{x+u}{2}-u)\vee 0 = (x+u-u) \vee 0 = x.$ 
Thus, the condition S1 of Definition \ref{3.1} holds. It can easily be verified that the condition S2 is also satisfied. Also, $s(0)=\frac{u}{2}$. Therefore, $\Gamma(G,u)$ is not good. Moreover,  $s(x)\odot s(y) =s(x\odot y)$ does not hold.
\end{exm}

We call a bounded hoop  algebra $H$ is regular if for every $x,y\in H$ such that $x \ne 0$ and $y \ne 0$, then $x \wedge y \ne 0$.
 
\begin{thm}\label{4.2}
Let $H$ be a bounded and good hoop  algebra. Then, for every $x\in H$:
\begin{itemize}
\item[{\rm (i)}] $x\wedge x'=0$,
\item[{\rm (ii)}] $s(x)\odot s(x')=0$,
\item[{\rm (iii)}] If, in addition,   $H$ satisfies $(DNP)$, then $x=s(x^2)$,
\item[{\rm (iv)}] $x\in \reg(H)$ if and only if $s(x)\in \reg(H)$,
\item[{\rm (v)}] If $H$ is basic, then $H$ is regular if and only if for every $x,y\in H$,  $s(x) \ne 0$ and $s(y) \ne 0$, imply $s(x \wedge y) \ne 0$.
\item[{\rm (vi)}] $s(x \odot y) \le (s(x)\odot s(y))''$.
If $H$ has $(DNP)$, then $s(x \odot y) = s(x) \odot s(y)$.
\end{itemize}
\end{thm}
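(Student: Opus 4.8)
The plan is to read all six items as specialisations, to the good case $s(0)=0$ (Definition~\ref{4.1}), of the general square-root facts in Theorems~\ref{3.13} and~\ref{3.17}. Before the items themselves I would record two consequences of goodness that get used throughout. The first is the zero-reflection $s(z)=0\Longleftrightarrow z=0$: its forward half is Theorem~\ref{3.3}(i) and its backward half is $s(0)=0$. The second is that the square root commutes with negation, $s(x')=s(x)'$; this is Theorem~\ref{3.8}(vi) applied to $x\rightarrow 0$ together with $s(0)=0$, and iterating it gives $s(x'')=s(x)''$, which will drive item (iv).

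The items (i), (ii) and (vi) then reduce to inserting $s(0)=0$ into inequalities already available. Item (i) is Theorem~\ref{3.13}(iii), $x\wedge x'\le s(0)=0$; item (ii) is Theorem~\ref{3.13}(viii), $s(x)\odot s(x')\le s(0)=0$ (alternatively $s(x)\odot s(x')=s(x)\odot s(x)'=0$ by the commutation identity and Proposition~\ref{1.5}(i)); and the first half of (vi) is Theorem~\ref{3.13}(xiv), whose envelope $((s(x)\odot s(y))\rightarrow s(0))\rightarrow s(0)$ collapses to $(s(x)\odot s(y))''$. For the second half of (vi) I would note that under $(DNP)$ every element is regular, so $(s(x)\odot s(y))''=s(x)\odot s(y)$; combining this with the reverse inequality $s(x)\odot s(y)\le s(x\odot y)$ of Theorem~\ref{3.8}(v) yields equality. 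Item (iii) I would obtain from Theorem~\ref{3.17}(ii): a bounded hoop with $(DNP)$ is Wajsberg by Corollary~\ref{1.18} and hence a $\vee$-hoop, so that theorem gives $s(x^2)=x\vee s(0)=x$.

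The two items with genuine content are (iv) and (v). For (iv) I would use the equivalences $x\in\reg(H)\Leftrightarrow x=x''\Leftrightarrow s(x)=s(x'')\Leftrightarrow s(x)=s(x)''\Leftrightarrow s(x)\in\reg(H)$, where injectivity of the square root (Theorem~\ref{3.80}) supplies the middle step and $s(x'')=s(x)''$ is the commutation identity. For (v), if $H$ is regular and $s(x),s(y)\ne 0$ then $s(x)\wedge s(y)\ne 0$, and Theorem~\ref{3.12}(i) (where basicness enters) identifies this with $s(x\wedge y)$; conversely, if the stated implication holds and $a,b\ne 0$ then $s(a),s(b)\ne 0$, so $s(a\wedge b)\ne 0$ and hence $a\wedge b\ne 0$ by zero-reflection, which is exactly regularity.

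The main obstacle is organisational rather than computational: one must establish the two good-hoop identities first and then check, after setting $s(0)=0$, that the hypotheses of the imported theorems still hold — in particular that $(DNP)$ upgrades $H$ to a $\vee$-hoop for (iii) and forces $(s(x)\odot s(y))''=s(x)\odot s(y)$ for (vi). No fresh inequality is required; the real content is the single identity $s(x')=s(x)'$ for good hoops, from which both (iv) and the sharp form of (vi) follow.
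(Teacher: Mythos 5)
Your proposal is correct and follows essentially the same route as the paper: each item is obtained by substituting $s(0)=0$ into Theorems~\ref{3.13}, \ref{3.17}, \ref{3.8} and~\ref{3.12}, with injectivity (Theorem~\ref{3.80}) supplying the converse direction in (iv), exactly as in the paper's proof. The only cosmetic difference is in the $(DNP)$ half of (vi), where you combine double-negation elimination with Theorem~\ref{3.8}(v) instead of quoting Theorem~\ref{3.17}(ii) as the paper does; both are one-line applications of the same machinery.
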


\begin{proof}
By Theorem \ref{3.13}(iii), the proofs of (i) and (ii) are straightforward.\\
(iii) According to Theorem~\ref{3.17}(ii), the proof is straightforward.\\
(iv) Let us assume $x\in \reg(H)$. By Theorem \ref{3.8}(vi), we have:
\begin{align*}
s(x)'' &= (s(x)\rightarrow 0)\rightarrow 0 \\
&= (s(x)\rightarrow s(0))\rightarrow s(0) \\
&= s(x'') \\
&= s(x)
\end{align*}
Conversely, let's assume $s(x)\in \reg(H)$. By Theorem~\ref{3.8}(vi), we have:
\begin{align*}
s(x) &= s(x)''\\
&= (s(x)\rightarrow 0)\rightarrow 0 \\
&= (s(x)\rightarrow s(0))\rightarrow s(0) = s(x'')
\end{align*}
Now, by Theorem~\ref{3.80}, we have $x=x''$, hence $x\in \reg(H)$.\\
(v) Let $H$ be  a basic hoop. The \textit{only if} part is  evident. To prove the \textit{if} part, let $x,y\in H$ be two non-zero elements. Since $s(0) = 0$ we have $s(x)\ne 0$ and $s(y)\ne 0$. As $H$ is a basic hope we get $0\ne s(x)\wedge s(y) = s(x\wedge y)$. Therefore, $x\wedge y\ne 0$. \\
(vi) Due to Theorems \ref{3.13}(xiv) and \ref{3.17}(ii), the proof is clear.
\end{proof}

\begin{thm}\label{4.3}
Let $s:H \rightarrow H$ be a square root on a bounded hoop  algebra $H$. Then:
\begin{itemize}
\item[{\rm (i)}] If $H=\id(H)$, then $H$ is good.
\item[{\rm (ii)}] If $H$ satisfies $(DNP)$ and is good, then $H=\id(H)$.
\end{itemize}
\end{thm}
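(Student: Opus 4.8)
For (i) the argument is immediate and I would dispatch it first. Since $H=\id(H)$, the element $s(0)$ is itself idempotent, so $s(0)\odot s(0)=s(0)$; but the defining property (S1) of a square root gives $s(0)\odot s(0)=0$. Comparing the two equalities yields $s(0)=0$, which is exactly the assertion that $H$ is good. (Alternatively, under the blanket hypothesis $H=\id(H)$ one may invoke Proposition~\ref{3.30} to get $s(x)=x$ for every $x$, and in particular $s(0)=0$.) There is no genuine obstacle here.

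For (ii) my plan is to show that under $(DNP)$ together with goodness every element becomes complemented, i.e.\ lands in the Boolean center $B(H)$, and then to read off idempotency from the known structure of $B(H)$. First I would record the meet side: goodness means $s(0)=0$, so Theorem~\ref{3.13}(iii) (equivalently Theorem~\ref{4.2}(i)) gives $x\wedge x'=0$ for every $x\in H$. Next I would produce the dual join identity $x\vee x'=1$. For this I use that a bounded hoop with $(DNP)$ is a Wajsberg hoop by Corollary~\ref{1.18}, hence a lattice and in particular a $\vee$-hoop by Definition~\ref{1.17}; this makes Proposition~\ref{1.61}(iv), namely $(x\wedge y)'=x'\vee y'$, available. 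Taking $y=x'$ and using $(DNP)$, i.e.\ $x''=x$, gives
\[
1=0'=(x\wedge x')'=x'\vee x''=x'\vee x .
\]

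Combining $x\wedge x'=0$ with $x\vee x'=1$ shows that $x'$ is a complement of $x$, so $x\in B(H)$ for every $x$; that is, $H=B(H)$. Finally I would apply Lemma~\ref{4.22}(i) (equivalently Corollary~\ref{4.25}, which identifies $B(H)=\id(H)\cap\reg(H)$) to conclude $x^2=x$ for every $x\in H$, i.e.\ $H=\id(H)$, as required. The main obstacle is precisely the middle step, passing from the meet identity $x\wedge x'=0$ to the join identity $x\vee x'=1$: this is where $(DNP)$ is indispensable, both to guarantee that $H$ is a $\vee$-hoop so that Proposition~\ref{1.61}(iv) can be applied and to rewrite $x''$ as $x$. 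Without $(DNP)$ one controls only the meet side and cannot force complementation, which is consistent with the non-good, non-idempotent examples in Example~\ref{4.43}.
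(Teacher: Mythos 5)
Your proof is correct, but for part (ii) it follows a genuinely different route from the paper. The paper's argument is a two-line computation with the square root itself: from Theorem~\ref{3.13}(v) one has $s(x)\le x'\rightarrow s(0)$, goodness turns this into $s(x)\le x'\rightarrow 0=x''$, and $(DNP)$ gives $s(x)\le x$; combined with $x\le s(x)$ (Theorem~\ref{3.3}(i)) this forces $s(x)=x$ for every $x$, whence $x^2=x$ by Proposition~\ref{3.30}. Your argument instead never identifies $s$ with the identity map: you extract the meet identity $x\wedge x'=0$ from Theorem~\ref{3.13}(iii) (equivalently Theorem~\ref{4.2}(i)), obtain the join identity $x\vee x'=1$ from Proposition~\ref{1.61}(iv) together with $x''=x$ (after using Corollary~\ref{1.18} and Definition~\ref{1.17} to get the lattice structure, so that Proposition~\ref{1.61}(iv) applies), conclude $H=B(H)$, and then read off idempotency from Lemma~\ref{4.22}(i). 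Each route buys something: the paper's is shorter and yields the stronger intermediate fact that the square root is the identity on a good hoop with $(DNP)$; yours establishes the structurally richer statement that every element is complemented, i.e.\ it re-derives by elementary identities the ``good $\Rightarrow$ Boolean'' half of Theorem~\ref{4.4} (which the paper proves only by passing through the $MV$-algebra literature), and idempotency falls out as a corollary. Both proofs of part (i) are essentially the paper's, which simply invokes Proposition~\ref{3.30}; your direct computation $s(0)=s(0)\odot s(0)=0$ is the same observation unwound.
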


\begin{proof}
(i) By Proposition \ref{3.30}, the proof is straightforward.\\
(ii) Let's assume $H$ is good and $x\in H$. By Theorem \ref{3.13}(v) and $(DNP)$, we obtain $s(x)\le x' \rightarrow 0 = (x\rightarrow 0)\rightarrow 0 = x.$ Therefore, for every $x\in H$, $s(x) = x$. Thus, by Proposition \ref{3.30}, we have $H=I(H)$.
\end{proof}

\begin{thm}\label{4.4}
Let $H$ be a bounded $\vee$-hoop  algebra with $(DNP)$ and $s:H \rightarrow H$ be a square root on $H$. Then the following conditions are equivalent:
\begin{itemize}	
\item[{\rm (i)}] $H$ is good,
	
\item[{\rm (ii)}] $H$ is a Boolean algebra.
\end{itemize}
\end{thm}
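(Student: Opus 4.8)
The plan is to prove the two implications separately, noting that the substantive direction is (i)$\Rightarrow$(ii), while (ii)$\Rightarrow$(i) is almost immediate. For (ii)$\Rightarrow$(i): if $H$ is a Boolean algebra then its monoid operation is meet, so $x\odot x = x\wedge x = x$ for every $x$; that is, $H=\id(H)$. By Proposition~\ref{3.30} this forces $s(x)=x$ for all $x$, in particular $s(0)=0$, so $H$ is good.

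For (i)$\Rightarrow$(ii) I would start from $s(0)=0$ and invoke the work already done. Since $H$ carries $(DNP)$ and is good, Theorem~\ref{4.3}(ii) yields $H=\id(H)$, i.e.\ every element is idempotent. First I would record the elementary fact that in any idempotent hoop the monoid product coincides with meet: Theorem~\ref{1.4}(iii) gives $x\odot y\le x\wedge y$, while idempotency together with monotonicity of $\odot$ (Theorem~\ref{1.4}(xiii)) gives $x\wedge y = (x\wedge y)\odot(x\wedge y)\le x\odot y$, whence $\odot=\wedge$ on $H$. Moreover $(DNP)$ states $x''=x$ for every $x$, so $H=\reg(H)$ as well. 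Combining the two, $H=\id(H)\cap\reg(H)$, which by Corollary~\ref{4.25} equals $B(H)$; thus \emph{every} element of $H$ is complemented, its complement being $x'$, which satisfies $x\vee x'=1$ and $x\wedge x'=0$.

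It then remains to assemble $(H;\vee,\wedge,{}',0,1)$ into a Boolean algebra. It is a bounded lattice because $H$ is a $\vee$-hoop (so $\vee$ is a genuine join by Definition~\ref{1.55}) and a $\wedge$-semilattice by Theorem~\ref{1.4}(i). Distributivity follows from Proposition~\ref{1.8}(iii) applied to a two-element index set, giving $x\wedge(y\vee z)=(x\wedge y)\vee(x\wedge z)$; the dual distributive law is then automatic, since either distributive identity forces the other in any lattice. A bounded distributive lattice in which every element has a complement is exactly a Boolean algebra. To close the loop with the hoop structure of Example~\ref{3.2}(d), I would finally observe that, using $\odot=\wedge$ together with Proposition~\ref{1.61}(ii) and (iv), the residuum is recovered as $x\rightarrow y = (x\odot y')' = (x\wedge y')' = x'\vee y''=x'\vee y$, the Boolean conditional.

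The main obstacle is concentrated in the last paragraph rather than in any single estimate: once Theorem~\ref{4.3}(ii) has reduced the hypotheses to idempotency, the real content is verifying that the hoop's derived operations genuinely satisfy the Boolean axioms—that $\odot$ collapses to $\wedge$, that $'$ supplies two-sided complements for \emph{all} elements, and that the induced lattice is distributive—so that ``$H$ is a Boolean algebra'' holds in the full signature and not merely at the level of the order.
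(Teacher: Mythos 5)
Your proof is correct, but it follows a genuinely different route from the paper. The paper's proof is essentially a two-line citation: it invokes an external result (Borzooei--Aaly Kologani, Theorem 3.13 of their paper) to identify a bounded $\vee$-hoop with $(DNP)$ with an $MV$-algebra via $x\oplus y = x'\rightarrow y$, and then quotes H\"{o}hle's Proposition 2.19, which says precisely that an $MV$-algebra with square root is Boolean if and only if it is good. You instead give a self-contained argument inside the paper's own development: Theorem~\ref{4.3}(ii) reduces goodness plus $(DNP)$ to $H=\id(H)$; idempotency collapses $\odot$ to $\wedge$ (your argument here matches Remark~\ref{4.40}); $(DNP)$ gives $H=\reg(H)$; Corollary~\ref{4.25} and Lemma~\ref{4.26} then make every element complemented by $x'$; and Proposition~\ref{1.8}(iii) plus Proposition~\ref{1.61} supply distributivity and recover $x\rightarrow y = x'\vee y$. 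What each approach buys: the paper's proof is short but rests entirely on results outside the paper, and it leaves implicit in what sense $H$ ``is'' a Boolean algebra after the change of signature; your proof is longer but verifiable line-by-line from the paper's internal results, and it makes the conclusion precise in the full hoop signature ($\odot=\wedge$ and $\rightarrow$ the Boolean conditional, as in Example~\ref{3.2}(d)), not merely at the level of the order. One small interpretive point in your (ii)$\Rightarrow$(i) direction: if one reads ``$H$ is a Boolean algebra'' only as a statement about the lattice reduct rather than about term-equivalence, you should pass through Lemma~\ref{4.22}(i) (complemented elements are idempotent) instead of asserting $\odot=\wedge$ outright; either way the direction goes through via Proposition~\ref{3.30}.
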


\begin{proof}
According to  \cite[Theorem 3.13]{ABAK2}, the $\vee$-bounded hoop  algebra $(H;\odot,\rightarrow,0,1)$ with $(DNP)$ is equivalent to the $MV$-algebra $(H;\oplus,0)$, where $x\oplus y = x'\rightarrow y$. Now, by~\cite[Proposition 2.19]{Hol}, since $H$ is equipped with a square root, $H$ is a Boolean algebra if and only if it is good.
\end{proof}

\begin{rmk}\label{4.80}
Let $H$ be a bounded $\vee$-hoop algebra with $(DNP)$ and $s:H \rightarrow H$  a square root on $H$.\\
(i) If  $s(x)\odot s(y) = s(x\odot y)$ for every $x,y \in H$, holds, then by Proposition \ref{3.10}(iv) and Theorem \ref{4.4}, $H$ is a Boolean algebra.\\
(ii) In light of  Proposition~\ref{3.10}, Theorem~\ref{4.2}(iii) and Theorem~\ref{4.4}, for all $x \in H$, $x= s(x^2)$ if and only if $H$ is a Boolean algebra.  
\end{rmk}

A  hoop  algebra $H$ is said to  satisfies the \textit{ascending chain condition} $(ACC)$ if for every  sequence 
\[	
x_1\le x_2\le \cdots \le x_n \le x_{n+1} \le \cdots	
\]	  
of elements in $H$, there is an integer $k$ such that $x_i = x_k$ for all $i\geq k$.  $H$ is said to  satisfies the \textit{descending chain condition} $(DCC)$ if for every  sequence 
\[	
x_1\ge x_2\ge \cdots \ge x_n \ge x_{n+1} \ge \cdots	
\]	  
of elements in $H$, there is an integer $k$ such that $x_i = x_k$ for all $i\geq k$.

\begin{thm}\label{4.13}
Let $H$ be a finite bounded hoop  algebra and $s: H \rightarrow H$ be a square root on $H$. Then the following statements hold:
\begin{itemize}
    \item[{\rm (i)}] The map $s: H \rightarrow H$ is onto.
    \item[{\rm (ii)}] For every $x\in H$, $s(x)=x$, and thus $H$ is good.
\end{itemize}
\end{thm}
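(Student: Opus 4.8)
The plan is to derive both parts from the injectivity of $s$ (Theorem~\ref{3.80}) together with the finiteness of $H$. For part (i), I would note that $s:H\to H$ is a one-to-one self-map by Theorem~\ref{3.80}, and that an injective map from a finite set to itself is automatically surjective (pigeonhole). Hence $s$ is onto, and in fact a bijection on $H$.

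For part (ii), the idea is to build an ascending chain and then collapse it. By Theorem~\ref{3.3}(i) we always have $x\le s(x)$, and by monotonicity of $s$ (Theorem~\ref{3.8}(i)) we may apply $s$ repeatedly to obtain
\[
x \le s(x) \le s^2(x) \le \cdots \le s^n(x) \le \cdots,
\]
where $s^n$ denotes the $n$-fold composite of $s$. Since $H$ is finite, this chain cannot increase strictly forever, so it stabilizes: there is some $k\ge 0$ with $s^k(x)=s^{k+1}(x)$.

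The crucial step is to pull this equality back to $x$ itself using injectivity. If $k=0$ we already have $x=s(x)$. If $k\ge 1$, then from $s^k(x)=s^{k+1}(x)$, that is $s\bigl(s^{k-1}(x)\bigr)=s\bigl(s^k(x)\bigr)$, Theorem~\ref{3.80} gives $s^{k-1}(x)=s^k(x)$; iterating this cancellation $k$ times yields $x=s(x)$. Thus $s(x)=x$ for every $x\in H$. Finally, taking $x=0$ gives $s(0)=0$, so $H$ is good by Definition~\ref{4.1}.

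The main obstacle, and really the only genuinely new idea, is recognizing that injectivity alone lets one descend the stabilized chain back to the original element. Monotonicity and finiteness only guarantee that \emph{some} iterate $s^k(x)$ is a fixed point of $s$, and without the cancellation supplied by Theorem~\ref{3.80} one could not conclude that $x$ itself is fixed. An equivalent route would be to observe that, being a bijection of the finite set $H$, the map $s$ has finite order, say $s^N=\id$, whence $x\le s(x)\le\cdots\le s^N(x)=x$ forces equality throughout; I would nonetheless favor the cancellation argument as the primary one, since it is more self-contained and does not invoke the order of a permutation.
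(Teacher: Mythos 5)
Your proposal is correct and takes essentially the same approach as the paper: part (i) is the identical pigeonhole argument (injective self-map of a finite set is onto), and part (ii) rests on the same three ingredients — the ascending chain $x\le s(x)\le s^{2}(x)\le\cdots$, stabilization forced by finiteness, and cancellation via the injectivity of $s$ (Theorem~\ref{3.80}) to descend to $x=s(x)$. The only difference is presentational: the paper packages the argument as a proof by contradiction starting from $x<s(x)$, while you argue directly, which is if anything slightly cleaner.
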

\begin{proof}
(i) It is clear that $s(H) \subseteq H$. Moreover, since $s$ is one to one  and $\lvert s(H) \lvert= \lvert H \lvert$, we have $s(H)= H$. Therefore, $s: H \rightarrow H$ is onto.\\
(ii) Suppose $x\in H$ such that $x < s(x)$. If $s(x) = s(s(x))$, then as $s$ is one to one, $x = s(x)$, which is a contradiction. Thus, $s(x) < s(s(x))= s^2(x)$. Similarly, $s^2(x) < s^3(x)$. Hence, we obtain an ascending sequence of elements in $H$ as follows:
\[ 
x < s(x) < s^2(x) < \ldots 
\]
By the assumption of the finiteness of $H$, there exists $n \in \mathbb{N}$ such that $s^n(x)= s^{n+1}(x) = \ldots$. Therefore, by Theorem \ref{3.80}, we have $x = s(x)$, which is a contradiction. Thus, for every element in $H$, $s(x)=x$. Therefore, $H$ is good.
\end{proof}

\begin{prop}\label{4.14}
Let $H$ be a bounded hoop algebra and $s: H \rightarrow H$ be a square root on $H$. If $H$ satisfies  $(ACC)$, then for every $x \in H$, $x^2 = x$.
\end{prop}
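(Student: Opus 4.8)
The plan is to convert the desired idempotency into a statement about fixed points of the square root, and then use the ascending chain condition to force every element to be such a fixed point. By Proposition~\ref{3.30}, for a fixed $x\in H$ the equality $x^2=x$ is equivalent to $s(x)=x$; hence it suffices to prove that $s$ is the identity map on $H$. This is exactly the conclusion reached for finite algebras in Theorem~\ref{4.13}(ii), and I would replicate that argument with the finiteness hypothesis replaced by $(ACC)$.

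First I would observe, by applying Theorem~\ref{3.3}(i) to $s^{n}(x)$ in place of $x$, that $s^{n}(x)\le s\bigl(s^{n}(x)\bigr)=s^{n+1}(x)$ for every $n$; thus for each fixed $x$ the iterates form an ascending chain $x\le s(x)\le s^{2}(x)\le\cdots$ in $H$. Since $H$ satisfies $(ACC)$, this chain must stabilize, so there is an integer $k$ with $s^{k}(x)=s^{k+1}(x)$.

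The key step is then to propagate this single equality all the way down to $x$ by repeatedly invoking the injectivity of $s$ (Theorem~\ref{3.80}). Assuming $k\ge 1$, rewrite the two sides of $s^{k}(x)=s^{k+1}(x)$ as $s\bigl(s^{k-1}(x)\bigr)=s\bigl(s^{k}(x)\bigr)$; injectivity of $s$ then yields $s^{k-1}(x)=s^{k}(x)$, and iterating this downward collapse produces $x=s(x)$ (the base case $k=0$ already gives $s(x)=x$ outright). Finally, Proposition~\ref{3.30} converts $s(x)=x$ into $x^{2}=x$, and since $x$ was arbitrary the proof is complete. I expect the only delicate point to be this downward induction: one must check that each collapse is a legitimate instance of injectivity, i.e.\ that both sides of the relevant equation are genuinely of the form $s(\,\cdot\,)$, which is guaranteed precisely because the index stays at least $1$ until the last step. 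Note that boundedness of $H$ plays no essential role here; only $x\le s(x)$, injectivity of $s$, and Proposition~\ref{3.30} are needed.
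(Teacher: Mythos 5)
Your proof is correct and follows essentially the same route as the paper: the paper's proof of this proposition simply repeats the argument of Theorem~\ref{4.13}(ii) with finiteness replaced by $(ACC)$, i.e.\ the ascending chain $x\le s(x)\le s^{2}(x)\le\cdots$, stabilization, injectivity of $s$ (Theorem~\ref{3.80}), and Proposition~\ref{3.30}. The only difference is cosmetic — the paper phrases the collapse as a proof by contradiction (assuming $x<s(x)$ and deriving a strictly increasing chain), whereas you run the same injectivity argument directly downward from the stabilization point.
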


\begin{proof}
Similar to the proof of Theorem \ref{4.13}(ii), for every $x \in H$, we have $s(x)=x$. Therefore, by Proposition~\ref{3.30}, for every $x \in H$, $x^2 = x$.
\end{proof}
\begin{rmk}\label{4.40}
	It is know that idempotent hoops are the \{$\wedge$,$\ra$,1\}-subreducts of Heyting algebras (see~{\rm \cite[Example 1.11]{BlFe2}}). So if a hoop algebra $H$ is  finite or it satisfies $(ACC)$, then the operation $\odot$ coincides with the operation binary meet. In fact, for all $x,y \in H$, we have  $x\odot y \ge x\odot (x \wedge y) = x\odot (x\odot(x\ra y)) = x\odot(x\ra y) = x \wedge y$. The inequality  $x\odot y \le x\wedge y$ always is true.
\end{rmk}
\begin{color}{blue}
\end{color}

\begin{thm}\label{Godel algebra}
Let $H$ be a bounded hoop algebra and $s: H \rightarrow H$  a square root on $H$. Let $H$ be finite or  it satisfies $(ACC)$. 
Then one has 
\begin{itemize}
	\item[{\rm (i)}] If $H$ is $\vee$-hoop with $(DNP)$, then $H$ is a Bollean algebra.
	
	\item[{\rm (ii)}] If  $H$ is  totally ordered, then $H$‌ has the structure of G\"{o}del algebra. 
\end{itemize}
\end{thm}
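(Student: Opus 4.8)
The plan is to reduce both parts to the idempotency of $H$, which the finiteness/$(ACC)$ hypothesis forces, and then to invoke the structure theorems already established. First I would observe that whether $H$ is finite or satisfies $(ACC)$, Proposition~\ref{4.14} (together with Theorem~\ref{4.13}(ii) and Proposition~\ref{3.30} in the finite case) yields $x^2 = x$ for every $x \in H$; that is, $H = \id(H)$. This single reduction is what unifies the two parts.

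For part (i), once idempotency is in hand, Theorem~\ref{4.3}(i) shows that $H$ is good. Since by hypothesis $H$ is a bounded $\vee$-hoop with $(DNP)$ carrying a square root, Theorem~\ref{4.4} applies, and the goodness of $H$ is there shown equivalent to $H$ being a Boolean algebra. This closes (i) with no further computation.

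For part (ii), I would use idempotency to identify the monoid operation with the meet: by Remark~\ref{4.40}, $x \odot y = x \wedge y$ for all $x,y \in H$, and since $H$ is totally ordered this is just $\min(x,y)$. It remains to pin down the residuum. For $x \le y$ we have $x \rightarrow y = 1$ by the definition of $\le$. For $x > y$, I would apply Theorem~\ref{1.4}(i), namely $x \wedge y = x \odot (x \rightarrow y) = \min(x, x \rightarrow y)$; since $x \wedge y = y < x$, the minimum cannot equal $x$, forcing $x \rightarrow y < x$ and hence $\min(x, x \rightarrow y) = x \rightarrow y = y$. Thus $\rightarrow$ is exactly the G\"{o}del implication of Example~\ref{3.2}(b), and $(H;\odot,\rightarrow,1)$ carries the structure of a G\"{o}del algebra.

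The bulk of the argument is bookkeeping, as the real content has already been packaged into Proposition~\ref{4.14}, Theorem~\ref{4.3}, Theorem~\ref{4.4} and Remark~\ref{4.40}. The only genuine calculation is the determination of the residuum in the $x > y$ case of (ii); that short step---verifying that the forced inequality $x \rightarrow y < x$ indeed collapses the minimum to $y$---is where I would be most careful, since it is the one place the total ordering is used in an essential way rather than merely to turn $\wedge$ into $\min$.
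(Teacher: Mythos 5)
Your proposal is correct and follows essentially the same route as the paper: reduce to idempotency via Theorem~\ref{4.13}/Proposition~\ref{4.14}, deduce goodness and apply Theorem~\ref{4.4} for (i), and use Remark~\ref{4.40} together with $x \wedge y = x \odot (x \rightarrow y)$ and the total order to force $x \rightarrow y = y$ for (ii). Your phrasing of the residuum computation via $\min$ is just a repackaging of the paper's two-case analysis ($x \le x \rightarrow y$ versus $x \rightarrow y \le x$), so there is nothing substantively different.
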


\begin{proof}	
(i) If $H$ is $\vee$-hoop with $(DNP)$, then by using Theorem \ref{4.13} and Proposition \ref{4.14}, $H$ is good, hence is a Bollean algebra according to Theorem~\ref{4.4}.\\
(ii) Let $H$ be totally ordered. Based on the assumption  and Remark \ref{4.40}, we have  $x\odot y = x \wedge y$, for all  $x,y \in H$. If $x \le y$, it is clear that $x \rightarrow y = 1$. Let us now assume $y< x$.  If $x\le  x \rightarrow y$ then  we have $y = x\wedge y =x\odot (x \rightarrow y) =x \wedge (x \rightarrow y)= x$ which is a contradiction. If $x \rightarrow y\le x$ we have  $y = x\wedge y = x\odot ( x \rightarrow y) =  x \wedge (x \rightarrow y) =x \rightarrow y$. Thus the implication is the G\"{o}del implication and $H$‌ has the structure of G\"{o}del algebra.
\end{proof}

\begin{prop}\label{4.16}
Let $H$ be a cancellative and bounded hoop  algebra, and let $s: H \rightarrow H$ be a square root on $H$. If $H$ satisfies $(DCC)$, then $H=\{1\}$.
\end{prop}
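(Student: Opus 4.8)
The plan is to prove that cancellation forces every element of $H$ to equal $1$, and that the descending chain condition is exactly what lets cancellation bite. First I would fix an arbitrary $a \in H$ and consider the sequence of powers $a \ge a^2 \ge a^3 \ge \cdots$. This is a genuine descending chain: each step $a^{n+1} = a^n \odot a \le a^n$ holds by Theorem~\ref{1.4}(iii), and $a \le 1$ since $1$ is the greatest element (Theorem~\ref{1.4}(vii)). Applying the descending chain condition $(DCC)$ to this sequence produces an index $k$ with $a^k = a^{k+1}$.

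The second step is to cancel. Rewriting the equality $a^k = a^{k+1}$ as $a^k \odot 1 = a^k \odot a$ and using that $(H;\odot,1)$ is a cancellative monoid (Definition~\ref{2.15}), I would cancel the common factor $a^k$ to obtain $a = 1$. Since $a \in H$ was arbitrary, this shows $H = \{1\}$, as claimed.

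I do not expect a real obstacle here: the only thing one must notice is that the powers of a single element already form a descending chain, after which $(DCC)$ and cancellation close the argument at once. It is worth recording that the square root $s$ plays no role along this route. In fact, boundedness by itself yields an even shorter proof, since the least element $0$ satisfies $0 \odot 0 = 0 = 0 \odot 1$ (the first equality combining $0 \odot 0 \le 0$ from Theorem~\ref{1.4}(iii) with the minimality of $0$), so that cancellation forces $0 = 1$ and hence $H = \{1\}$. I would keep the descending-chain argument as the main proof, since it is the one that uses the hypothesis highlighted in the statement, and mention the boundedness shortcut as a remark.
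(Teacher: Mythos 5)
Your proof is correct, and it takes a genuinely more direct route than the paper's. The paper's own argument goes through the square root: it supposes $x \ne 1$ with $x < s(x)$, forms the descending chain $\ldots < s(x)^3 < s(x)^2 = x < s(x)$, applies $(DCC)$ and cancellation (via Proposition~\ref{2.17} and Theorem~\ref{3.80}) to force $s(x)=1$, a contradiction; it then concludes $s(x)=x$ for all $x$, hence $x^2=x$ by Proposition~\ref{3.30}, and finally cancels once more to get $x=1$. You strip out the square-root detour entirely: applying $(DCC)$ to the powers $a \ge a^2 \ge \cdots$ of an arbitrary element and cancelling $a^k$ in $a^k \odot 1 = a^k \odot a$ reaches $a=1$ in one pass. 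The underlying mechanism (stabilizing chain of powers plus cancellation) is the same in both proofs, but yours makes explicit what the paper's phrasing obscures, namely that $s$ is an idle hypothesis here. Your closing remark is also correct and is arguably the sharpest observation of all: in a bounded hoop, $0 \odot 0 \le 0$ together with minimality of $0$ gives $0 \odot 0 = 0 = 0 \odot 1$, so cancellativity alone forces $0=1$ and hence $H=\{1\}$ --- meaning both $(DCC)$ and the square root are redundant hypotheses in this proposition. Your choice to keep the chain argument as the main proof (since it exercises the stated hypothesis) and record the shortcut as a remark is sensible exposition.
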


\begin{proof}
Assume $1 \ne x \in H$ such that $x < s(x)$. According to Theorem \ref{1.4}(iii), we have the following sequence:
\[	
 \ldots < (s(x))^3 < (s(x))^2 = x < s(x)
\]
By assumption, there exists a natural number $n$ such that $s(x)^{n+1} = s(x)^n = \ldots$. Therefore, by Proposition \ref{2.17} and Theorem \ref{3.80}, we have $s(x) = 1$, which is a contradiction. Thus, for every element $x\in H$, we have $s(x) = x$. By Proposition \ref{3.30}, for every $x \in H$, $x^2 = x$. Since  $H$ is cancellative we get $x=1$.  
\end{proof}
	
\begin{defn}\label{4.18}
Let $H$ be a bounded hoop  algebra and $s: H \rightarrow H$ be a square root on $H$. If $s(0) = s(0)'$, then $H$ is called \textit{strict}.
\end{defn}

\begin{exm}\label{4.19}
(a) In Example \ref{4.43}(a), we observed that in the {\L}ukasiewicz algebra, $s(0) = 0.5$. Therefore, the {\L}ukasiewicz algebra is not good. Now, we have
$s(0)' = s(0) \rightarrow 0 = 0.5 \rightarrow 0 = \min(1, 1 - 0.5 + 0) = 0.5 = s(0)$. Therefore, {\L}ukasiewicz algebra is strict.\\
(b) In Example \ref{4.43}(b), we observed that in the hoop algebra 
 $\Gamma(G,u)$, 
 $s(0) = \frac{u}{2}$. Therefore, $\Gamma(G,u)$ is not good. Now, we have $s(0)' = s(0) \rightarrow 0 = \frac{u}{2} \rightarrow 0 = (0 - \frac{u}{2} + u) \wedge u = \frac{u}{2} \wedge u = \frac{u}{2} = s(0).$ Therefore, the hoop algebra $\Gamma(G,u)$ is strict.
\end{exm}

\begin{prop}\label{4.21}
Let $H$ be a bounded hoop  algebra and $s: H \rightarrow H$ be a square root on $H$. Assume $H$ is good. Then the following conditions are equivalent:
\begin{itemize}
\item[{\rm (i)}] $H$ is strict,
\item[{\rm (ii)}] $H = \{1\}$.
\end{itemize}
\end{prop}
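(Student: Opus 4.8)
The plan is to prove the two implications separately; the whole argument collapses to a single observation about how goodness and strictness interact at the least element $0$, so I do not expect any genuine obstacle beyond correctly reading off that collapse.

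First I would dispose of the direction (ii)$\Rightarrow$(i). If $H=\{1\}$, then the least element $0$ and the unit $1$ coincide, so every identity holds trivially; concretely, since $H$ is good we have $s(0)=0=1$ by Definition~\ref{4.1}, while $s(0)'=0'=0\rightarrow 0=1$ by condition (H2) of Definition~\ref{1.1}. Hence $s(0)=s(0)'$, and $H$ is strict by Definition~\ref{4.18}.

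For the substantive direction (i)$\Rightarrow$(ii), I would combine the two standing hypotheses at the point $0$. Goodness gives $s(0)=0$ (Definition~\ref{4.1}) and strictness gives $s(0)=s(0)'$ (Definition~\ref{4.18}); chaining these with $0'=0\rightarrow 0=1$, again from (H2), yields
\[
0=s(0)=s(0)'=0'=0\rightarrow 0=1,
\]
so the least and greatest elements of $H$ coincide.

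Finally I would translate $0=1$ into $H=\{1\}$. In any hoop one has $x\le 1$ for every $x$ by Theorem~\ref{1.4}(vii), while $0\le x$ for every $x$ because $0$ is the least element of the bounded hoop $H$; with $0=1$ these bounds force $0\le x\le 1=0$, hence $x=0=1$ for all $x\in H$, that is, $H=\{1\}$. The only real content here is the one-line identity $0'=1$ — equivalently, the fact that strictness together with goodness pins $s(0)$ simultaneously to $0$ and to $0'=1$ — after which the conclusion is just the standard remark that a bounded hoop with $0=1$ is trivial, so there is no serious difficulty to overcome.
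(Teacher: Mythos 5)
Your proposal is correct and follows exactly the paper's own argument: the paper proves (i)$\Rightarrow$(ii) by the same chain $0=s(0)=s(0)'=0'=1$ and dismisses (ii)$\Rightarrow$(i) as obvious. Your version merely spells out the justifications ($0'=0\rightarrow 0=1$ by (H2), and the triviality of a bounded hoop with $0=1$) that the paper leaves implicit.
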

\begin{proof}
(i) $\Rightarrow$ (ii). Assume $H$ is strict. Since $H$ is good, $0 = s(0) = s(0)' = 0' = 1$.\\
(ii) $\Rightarrow$ (i). This is obvious.
\end{proof}

\begin{prop}\label{4.23}
Let $H$ be a bounded hoop  algebra and $s: H \rightarrow H$ be a square root on $H$. Assume that $H$ is strict. Then for all $x \in H$ we have 
\begin{itemize}
\item[{\rm (i)}] $s(x)' \le s(0)$,
\item[{\rm (ii)}] For every $x \in B(H)$, if $s(0) \le x$, then $x = 1$.
\end{itemize}
\end{prop}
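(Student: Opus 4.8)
The plan is to handle the two parts separately, both resting on the strictness hypothesis $s(0)=s(0)'$ (Definition~\ref{4.18}) together with the monotonicity of the square root and the order-reversing behaviour of the negation $(-)'$.

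For part (i) I would argue by a short chain of inequalities. Since $H$ is bounded, $0$ is the least element, so $0\le x$ for every $x\in H$. Applying the monotonicity of $s$ (Theorem~\ref{3.8}(i)) gives $s(0)\le s(x)$, and then the antitonicity of negation (Proposition~\ref{1.5}(ii)) yields $s(x)'\le s(0)'$. Finally strictness replaces $s(0)'$ by $s(0)$, whence $s(x)'\le s(0)$, as desired. This part is essentially a three-line computation and presents no real difficulty.

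For part (ii), let $x\in B(H)$ with $s(0)\le x$. First I would run the same antitone step as in (i): from $s(0)\le x$ and Proposition~\ref{1.5}(ii) we obtain $x'\le s(0)'$, and strictness again gives $x'\le s(0)$. The crucial observation is that $x'$ is idempotent. Indeed, since $x\in B(H)$, Lemma~\ref{4.26} gives $x'=x^*$, and from $x\vee x'=1$ together with $x\wedge x'=0$ (Lemma~\ref{4.22}(iv)) one reads off, by the symmetry of $\vee$ and $\wedge$, that $x$ is a complement of $x'$; hence $x'$ is complemented, i.e. $x'\in B(H)$, and so $(x')^2=x'$ by Lemma~\ref{4.22}(i). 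Now $x'\in\id(H)$ and $x'\le s(0)$, so Theorem~\ref{3.13}(ii) forces $x'=0$. Since Boolean-center elements are regular, $x''=x$ by Lemma~\ref{4.22}(i), and therefore $x=x''=(x')'=0'=1$, completing the proof.

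The only step that requires genuine care, and thus the main obstacle, is the verification that $x'$ is idempotent, since this is exactly what makes Theorem~\ref{3.13}(ii) applicable; everything else is a routine use of monotonicity, antitonicity and strictness. The key point to confirm is that the Boolean center $B(H)$ is closed under $(-)'$, which follows because complementation is a symmetric relation and $x'$ coincides with the complement $x^*$ by Lemma~\ref{4.26}.
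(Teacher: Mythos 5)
Your proposal is correct and takes essentially the same route as the paper: part (i) is identical, and in part (ii) both arguments derive $x'\le s(0)'=s(0)$ and then kill $x'$ by squaring an idempotent --- your appeal to Theorem~\ref{3.13}(ii) is exactly the paper's inline computation $x'=(x')^2\le s(0)\odot s(0)=0$ packaged as a lemma, after which both conclude $x=x''=0'=1$. If anything, you are more careful than the paper, which cites Lemma~\ref{4.22}(i) for $(x')^2=x'$ without justifying, as you explicitly do, that $x'\in B(H)$.
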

\begin{proof}
(i) We have $0 \le x$, for every $x \in H$. Therefore, $s(0) \le s(x)$. Thus, by Proposition \ref{1.5}(ii),  $s(x)' \le s(0)' = s(0)$.\\
(ii) Let $x \in B(H)$ and $s(0) \le x$. By Proposition~\ref{1.5}(ii),  $x' \le s(0)' = s(0)$. Then, by  Lemma~\ref{4.22}(i) and Theorem~\ref{1.4}(xiii), we have $x' = (x')^2 \le s(0) \odot s(0) = 0$. Hence, $x = 1$.
\end{proof}

\begin{prop}\label{4.24}
Let $H$ and $G$ be two bounded hoop algebras, and $f: H \rightarrow G$ be a hoop  homomorphism. Suppose $s: H \rightarrow H$ and $t: G \rightarrow G$ are square roots on $H$ and $G$, respectively. If $H$ is strict and $f$ is a \textit{preserves square roots}, then $G$ is also strict.
\end{prop}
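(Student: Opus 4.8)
The plan is to reduce the strictness of $G$ to a direct computation that transports the strictness of $H$ across $f$, using only that $f$ is a hoop homomorphism (so $f(0)=0$ and $f$ commutes with $\rightarrow$) together with the hypothesis that $f$ preserves square roots, i.e. $f(s(x)) = t(f(x))$ for all $x\in H$. By Definition~\ref{4.18}, proving that $G$ is strict amounts to establishing the single identity $t(0) = t(0)'$, that is $t(0) = t(0)\rightarrow 0$ in $G$.

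The key step, and really the only substantive one, is to rewrite $t(0)$ in terms of data coming from $H$. Since $f$ is a hoop homomorphism we have $f(0)=0$ in $G$, and since $f$ preserves square roots we obtain
\[
t(0) = t(f(0)) = f(s(0)).
\]
This is where the hypotheses are used essentially; everything afterward is formal manipulation with the homomorphism.

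Next I would compute the negation $t(0)'$ and push $f$ through the implication. Using $f(0)=0$ and the fact that $f$ preserves $\rightarrow$, I would write
\[
t(0)' = t(0)\rightarrow 0 = f(s(0))\rightarrow f(0) = f\bigl(s(0)\rightarrow 0\bigr) = f\bigl(s(0)'\bigr).
\]
Now the strictness of $H$ enters: by Definition~\ref{4.18} applied to $H$ we have $s(0)=s(0)'$, so $f(s(0)') = f(s(0)) = t(0)$. Combining the two displays yields $t(0)' = t(0)$, which is exactly the statement that $G$ is strict.

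I do not expect any genuine obstacle here: the argument is a one-line transport of identities, and the whole content is the observation $t(0)=f(s(0))$ obtained from $f(0)=0$ and square-root preservation. The only point requiring minor care is notational bookkeeping between the two least elements and the two negations/square roots, which is handled automatically once one remembers that $f(0)=0$ identifies the bottom elements. No lattice or order reasoning, and no appeal to $\vee$, $(DNP)$, or goodness, is needed.
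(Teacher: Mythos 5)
Your proof is correct and follows essentially the same route as the paper's: both hinge on the identity $t(0)=t(f(0))=f(s(0))$ from square-root preservation, push the negation through $f$ via the homomorphism properties $f(0)=0$ and $f(x\rightarrow y)=f(x)\rightarrow f(y)$, and then invoke strictness of $H$ to swap $s(0)$ and $s(0)'$. The paper merely writes the whole computation as a single chain of equalities, whereas you compute $t(0)$ and $t(0)'$ separately; the content is identical.
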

\begin{proof}
We have $t(0)= t(f(0))=f(s(0))= f(s(0)')=f(s(0))'= t(f(0))' =t(0)'.$
\end{proof}

In 2003, R. B\u{e}lohl\`{a}vek showed that the variety of all residuated lattices with square roots is a variety. We demonstrate that the class of all bounded hoop  algebras with square roots is also a variety (see \cite[Corollary 3(2)]{Rbe}).

\begin{thm}\label{4.17}
The class of all bounded hoop  algebras with square roots is a variety.
\end{thm}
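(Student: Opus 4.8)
The plan is to show that the class is equationally definable and then invoke Birkhoff's theorem. I would work in the expanded signature $(\odot,\rightarrow,s,0,1)$ in which the square root $s$ is taken to be a fundamental unary operation; this is legitimate because, by Theorem~\ref{3.31}, the square root on a bounded hoop is unique, so every bounded hoop with square root expands in exactly one way. I would then exhibit a set $\Sigma$ of identities whose models are precisely the bounded hoop algebras with square root. Concretely, $\Sigma$ consists of the hoop identities (H1)--(H4) of Definition~\ref{1.1}, the boundedness identity $0\rightarrow x=1$ (expressing that $0$ is least), and the three square root identities
\[
s(x)\odot s(x)=x,\qquad s\big(x\odot(x\rightarrow y)\big)\rightarrow s(y)=1,\qquad x\rightarrow s(x\odot x)=1 .
\]
Recalling from Theorem~\ref{1.4}(i) that $x\wedge y=x\odot(x\rightarrow y)$, the second identity reads $s(x\wedge y)\le s(y)$ and the third reads $x\le s(x^2)$.

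First I would verify the forward inclusion: every bounded hoop algebra with square root $s$ is a model of $\Sigma$. The hoop and boundedness identities hold by definition, and the first square root identity is exactly (S1). The identity $s(x\wedge y)\le s(y)$ holds because $s$ is order preserving by Theorem~\ref{3.8}(i) together with $x\wedge y\le y$, while $x\le s(x^2)$ is precisely Theorem~\ref{3.8}(ii). Hence each such algebra satisfies $\Sigma$.

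Next I would prove the reverse inclusion, which is the heart of the matter: any algebra $\mathbf{H}$ satisfying $\Sigma$ is a bounded hoop on which $s$ is a square root. The hoop and boundedness identities make $\mathbf{H}$ a bounded hoop, and (S1) is immediate from the first identity. For (S2), suppose $y\odot y\le x$. The third identity gives $y\le s(y\odot y)$. The second identity encodes monotonicity of $s$: if $a\le b$ then $a=a\wedge b$, so $s(a)=s(a\wedge b)\le s(b)$; applying this to $y\odot y\le x$ yields $s(y\odot y)\le s(x)$. Combining, $y\le s(y\odot y)\le s(x)$, which is exactly (S2). Thus $s$ is a square root on $\mathbf{H}$, and the class coincides with $\mathrm{Mod}(\Sigma)$.

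Since $\mathrm{Mod}(\Sigma)$ is defined by a set of equations, it is a variety by Birkhoff's theorem, completing the proof. The only genuinely delicate point is replacing the quasi-identity (S2) — an implication between inequalities — by honest identities; the key observation enabling this is that, once (S1) is assumed, (S2) factors through the monotonicity of $s$ and the inequality $x\le s(x^2)$, each of which is an inequality $t_1\le t_2$ between terms and hence equivalent to an equation $t_1\rightarrow t_2=1$. I expect no further obstacle: closure under homomorphic images, subalgebras, and products then follows automatically from equationality, though one could alternatively verify these directly (products via Proposition~\ref{3.29}, subalgebras since closure under $s$ is built into the signature, and homomorphic images — the case where the quasi-identity formulation would fail — now guaranteed by the equational description).
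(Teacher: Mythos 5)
Your proof is correct, and it follows the same overall strategy as the paper: pass to the expanded signature $(\odot,\rightarrow,s,0,1)$, exhibit a set of identities equivalent over bounded hoops to (S1)+(S2), and conclude by Birkhoff. The difference lies in the choice of identities used to capture the quasi-identity (S2). The paper uses (sq1) $s(x)\odot s(x)=x$, (sq2) $s((y\odot y)\vee x)\wedge y=y$, and (sq3) $s(x')=s(x)\rightarrow s(0)$, where $\vee$ is the defined term $((x\rightarrow y)\rightarrow y)\wedge((y\rightarrow x)\rightarrow x)$; its converse direction recovers (S2) by noting that $y\odot y\le x$ collapses $(y\odot y)\vee x$ to $x$. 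You instead split (S2) into two inequational identities, $s\bigl(x\odot(x\rightarrow y)\bigr)\le s(y)$ (monotonicity of $s$, via the meet term) and $x\le s(x\odot x)$, and recover (S2) by the dual collapse $y\odot y\le x$ implies $(y\odot y)\wedge x=y\odot y$. The two axiomatizations are equivalent, but yours has two small advantages: it avoids the $\vee$ term entirely, whereas the paper justifies (sq2) by citing Theorem~\ref{3.14}(iii), a statement proved only for $\vee$-hoop algebras (in fact only monotonicity of $s$ and $y\le s(y^2)$ are needed there, so the paper's citation is heavier than necessary); and it drops the paper's (sq3), which is redundant since it follows from Theorem~\ref{3.8}(vi) and is never used in the converse. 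You are also more explicit than the paper about why the expanded-signature formulation is legitimate, namely the uniqueness of square roots (Theorem~\ref{3.31}).
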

\begin{proof}
Let $\mathcal{V}$ be the variety of all bounded hoop  algebras $(H;\odot,\rightarrow,0,1)$ with square roots. Also, let $\mathcal{W}$ be the variety of all hoop algebras $(G;\odot,\rightarrow,s,0,1)$ along with the following conditions:

(sq1) $s(x)\odot s(x)= x$,

(sq2) $s((y\odot y)\vee x) \wedge y = y$,

(sq3) $s(x')= s(x)\rightarrow s(0)$.

Assume $H\in \mathcal{V}$. Let $s:H \rightarrow H$ be a square root on $H$. We show that $H\in \mathcal{W}$. Since $H$ is a hoop  algebra, conditions H1 to H4, and then  (sq1) and (sq3) hold for $H$.  It suffices to verify condition (sq2). By Theorems \ref{3.14}(iii) and \ref{3.8}(v), one has
\[	
s((y\odot y)\vee x) \ge s(y\odot y)\vee s(x) \ge (s(y) \odot s(y)) \vee s(x) \ge y\vee s(x) \ge y
\]% \marginpar{\fbox{One explain should be added here. see farsi version of mine.}}	
Therefore, $s((y\odot y)\vee x)\wedge y = y$. Hence, $H\in \mathcal{W}$.

Conversely, assume $G\in \mathcal{W}$. Since $G$ satisfies conditions H1 to H4, it is a hoop  algebra. We only need to check condition S2 from Definition \ref{3.1}. Suppose $y\odot y \le x$. From condition (sq2), we obtain $y= s((y\odot y)\vee x) \wedge y = s(x)\wedge y$. Thus, $y\le s(x)$. Therefore, $s$ satisfies condition S2, and hence $G\in \mathcal{V}$. Thus, $\mathcal{V}=\mathcal{W}$.
\end{proof}

%%%%%%%%%%%%%%%%%%%%%%%%%%%%%
\bibliographystyle{amsplain}

\begin{thebibliography}{9}

\bibitem{AgFeMo} 
P. Agliano, I.M.A. Ferreirim, F. Montagna, Basic hoops: an algebraic study of continuous t-norms, Studia Logica, 87(1), (2007), 73-98. 

\bibitem{Amb} R. Ambrosio, Strict MV-algebras, Journal of Mathematical Analysis and Applications, 237, (1999), 320-326.

\bibitem{BeBl} J. Berman, W. J. Blok, Free {\L}ukasiewicz and hoop residuation algebras, Algebra and Substructural Logics, 77(2), (2004) ,153-180. 

\bibitem{Rbe} 
R.  B\v{e}lohl\'{a}vek, Some properties of residuated lattices, Czechoslovak Mathematical Journal, 53(1), (2003), 161-171. 

\bibitem{BlFe1} 
W. J. Blok,  I. M. A. Ferririm, Hoops and their implicational reducts, Algebraic methods in logic and computer Science, Banach center Publications 28, (1993), 219-230.

\bibitem{BlFe2}
W. J. Blok, I. M. A. Ferririm, On the structure of hoops, Algebraic Universalis, 43(2-3), (2000), 233-257. 

\bibitem{ABAK1}
R. A. Borzooei, M. Aaly Kologani, Filter theory of hoop-algebras. Journal of Advanced Research in Pure Mathematics, 6, (2014), 72–86. 

\bibitem{ABAK2} 
R. A. Borzooei, M. Aaly Kologani, Result on hoops. Journal of Algebraic Hyperstructure and logical Algebras, 1(1), (2020), 61-77.

\bibitem{Bos1} B. Bosbach, Komplementare Halbgruppen. Axiomatik und Arithmetik, Fund. Math. 64, (1969), 257-287.

\bibitem{Bos2} B. Bosbach, Komplementare Halbgruppen. Kongruenzen und Quotienten, Fund. Math. 69, (1970), 1-14.

\bibitem{BuOw}
J. R. Buchi,  T. M. Owenz, Complemented monoids and hoops, unpublished manuscript.

\bibitem{DBDPAJ} 
D. Busneag, D. Piciu, A. Jeflea, Archimedean residuated lattices, Annals of the Alexandru Ioan Cuza University-Mathematics, 56(1), (2010), 227-252. 

%\bibitem{CaCoGuMaOA} I.P. Cabrera, P. Cordero, G. Gutiérrez, J. Martínez, M. Ojeda-Aciego, On residuation in multilattices: Filters, congruences, and homomorphisms, Fuzzy Sets and Systems, 234,(2014), 1-21. 

\bibitem{ChDu}
W. Chen, W.A. Dudek, The representation of square root quasi-pseudo-MV algebras, Soft Computing, 19, (2015), 269-282.  

\bibitem{CiTo} R. Cignoli, A. Torrens, Free cancellative hoops, Algebra universalis. 43, (2000), 213-216. 

%\bibitem{CiDV} C. R. Cimadamore, J.P.D. Varela, Monadic Wajsberg hoops, Revista de la, Union matematica argentina, 57, 2, 2016, 63-83. 

%\bibitem{DaPr} B.A. Davey, H.A. Priestley, Introduction to Lattices and Order, Cambridge University Press, second Edition, 2002. 7th printing 2012. 

\bibitem{DvZa1}
A. Dvure\v censkij, O. Zahiri, Some results on pseudo MV-algebras with square roots, Fuzzy Sets and Systems, 465, (2023) 108527.

\bibitem{EsGoHaMo} 
F. Esteva, L. Godo, P. H{\'a}jek, F. Montagna, Hoops and fuzzy logic, Journal of Logic and Computation, 13(4), (2003), 532-555. 

%\bibitem{Fer} I. M. A. Ferririm, On varieties and quasivarieties of hoops and their reducts, Ph. D. Thesis, University of Illinois at Chicago, (1992).

\bibitem{GeLePr}
G. Georgescu, L. Leustean, V. Preoteasa, Pseudo-hoops, Journal of Multiple-Valued Logic and Soft Computing, 11(1-2), (2005), 153-184.

%\bibitem{Haj} P. H$\acute{a}$jek, Metamathematics of fuzzy logic, Springer Science and Business Media, 4 (1998).

\bibitem{Hol} U. H{\"o}hle, Commutative, residuated $\ell$-monoids. In: U. H{\"o}hle., E.P. Klement (eds.) Non-Classical Logics and their Applications to Fuzzy Subsets: A Handbook of the Mathematical Foundations of Fuzzy Set Theory, 32, Springer, Dordrecht, (1995), 53-106. 

%\bibitem{AKAB} M. A. Kologani, R.A. Borzooei, On ideal theory of hoops, Mathematica Bohemica, 145, (2019), 1–22.

%\bibitem{AKABNK} M. A. Kologani, R.A. Borzooei, N. Kouhestani, On (semi)topological hoops, Quasigroups and Related Systems, 2(27), 2019, 161-174. 

% \bibitem{KoDu} M. Kondo, W.A. Dudek, Filter theory of BL-algebras, Soft Computing, 12, (2008), 419–423. 

\bibitem{Kon}
M. Kondo, Some types of filters in hoops, 41st IEEE International Symposium on Multiple-Valued Logic, (2011), 50-53,  DOI:10.1109/ISMVL.2011.9. 

% \bibitem{NaABBsAk} A. Namdar, R.A. Borzooei, A. Borumand Saeid and M. Aaly Kologani, Some results in hoop algebras, Journal of Intelligent and Fuzzy Systems, 32, (2017), 1805-1813.

\bibitem{DPi} 
D. Piciu, Algebras of Fuzzy Logic, Editura Universtaria, Craiova, 2007.
\end{thebibliography}
%%%%%%%%%%%%%%%%%%%%%%%%%%%%%

 \end{document}